 \def\cocoa{{\hbox{\rm C\kern-.13em o\kern-.07em C\kern-.13em o\kern-.15em A}}}
\newtheorem{theorem}{Theorem}[section]
\newtheorem{lemma}[theorem]{Lemma}
\newtheorem{proposition}[theorem]{Proposition}
\newtheorem{corollary}[theorem]{Corollary}
\theoremstyle{definition}
\newtheorem{remark}[theorem]{Remark}
\newtheorem{definition}[theorem]{Definition}
\newcommand {\pf}{\mathrm{pf}}
\newcommand {\gr}{\mathrm{gr}}
\newcommand {\Hom}{\mathcal{H}\kern -0.25ex{\mathit om}}
\newcommand {\Spl}{\mathcal{S}\kern -0.25ex{\mathit pl}}
\newcommand {\Ext}{\mathcal{E}\kern -0.25ex{\mathit xt}}
\newcommand {\rk}{\mathrm{rk}}
\newcommand {\ext}{\mathrm{Ext}}
\newcommand {\Hilb}{\mathcal{H}\kern -0.25ex{\mathit ilb\/}}
\newcommand {\bZ}{\mathbb{Z}}
\newcommand {\bP}{\mathbb{P}}
\newcommand{\cE}{{\mathcal E}}
\newcommand{\cF}{{\mathcal F}}
\newcommand{\cM}{{\mathcal M}}
\newcommand{\cO}{{\mathcal O}}
\newcommand{\cG}{{\mathcal G}}
\newcommand{\cI}{{\mathcal I}}
\newcommand{\Pic}{\operatorname{Pic}}
\newcommand{\NS}{\operatorname{NS}}
\newcommand{\Num}{\operatorname{Num}}
\def\p#1{{\bP^{#1}}}
\def\mapright#1{\mathbin{\smash{\mathop{\longrightarrow}
\limits^{#1}}}}
\title[Examples of rank two aCM bundles on smooth quartic surfaces in $\p3$]{Examples of rank two aCM bundles\\ on smooth quartic surfaces in $\p3$}
\subjclass[2010]{Primary 14J60; Secondary 14J45}
\keywords{Vector bundle, Cohomology}
\author[Gianfranco Casnati, Roberto Notari]{Gianfranco Casnati, Roberto Notari}
\thanks{The authors are members of the GNSAGA group of INdAM and are supported by the framework of PRIN 2010/11 \lq Geometria delle variet{\accent"12 a} algebriche\rq, cofinanced by MIUR}
\begin{document}

\dedicatory{This paper is dedicated to Ph. Ellia on the occasion of his 60$\,{}^{ th}$ birthday.}

\begin{abstract}
Let $F\subseteq\p 3$ be a smooth quartic surface  and let $\cO_F(h):=\cO_{\p 3}(1)\otimes\cO_F$. In the present paper we classify locally free sheaves $\cE$ of rank $2$ on $F$
such that $c_1(\cE)=\cO_F(2h)$, $c_2(\cE)=8$ and $h^1\big(F,\cE(th)\big)=0$ for $t\in\bZ$. We also deal with their stability.

\end{abstract}

\maketitle

\section{Introduction and Notation}
We work over an algebraically closed field $k$ of characteristic $0$ and $\p 3$ will denote the projective space over $k$. 

Let $F\subseteq\p 3$ be a smooth surface and let $\cO_F(h):=\cO_{\p 3}(1)\otimes\cO_F$. A vector bundle $\cE$ on $F$ is called {\sl arithmetically Cohen--Macaulay} ({\sl aCM} for short) if $h^1\big(F,\cE(th)\big)=0$ for $t\in\bZ$. 

The property of being aCM is invariant up to shifting degrees. For this reason we restrict our attention to {\sl initialized} bundles, i.e. bundles $\cE$ such that $h^0\big(F,\cE(-h)\big)=0$ and $h^0\big(F,\cE\big)\ne0$. Moreover, we are also interested in {\sl indecomposable} bundles, i.e. bundles which do not split in a direct sum of bundles of lower rank.

Clearly $\cO_F$ is always initialized, indecomposable and aCM. Moreover, when $F=\p2$ the line bundle $\cO_F$ is the unique bundle with these properties: this is the so called Horrocks theorem (see \cite{O--S--S} and the references therein). 

As a by--product of a more general result of D. Eisenbud and D. Herzog (see \cite{E--He}) there is only another surface in $\p3$ endowed with at most a finite number of aCM bundles besides the plane, namely the smooth quadric surface $Q$. In this case there are only three indecomposable, initialized aCM bundles on $Q$ and they are all line bundles ($\cO_F$ and the so--called spinor bundles: see \cite{Ot} for further details). 
On the opposite side M. Casanellas and R. Hartshorne proved in \cite{C--H1} and \cite{C--H2} that a smooth cubic surface is endowed with families of arbitrary dimension of non--isomorphic, indecomposable initialized aCM bundles.
It is thus interesting to better understand indecomposable initialized aCM bundles. 

The study of aCM line bundles is trivially strictly related with the study of $\Pic(F)$. As pointed out above the results for the plane and the smooth quadric are classical. D. Faenzi completely described the case of a smooth cubic surface in the very first part of \cite{Fa}. K. Watanabe made a similar study for the smooth quartic surface (see \cite{Wa}). More in general, the existence of an aCM line bundle which has the maximal number of generators on a surface $F$ is related to the possibility of writing the equation of $F$ as the determinant of a square matrix with linear entries (see e.g. \cite{Ct} or \cite{Bea} and the references therein).

The next step is to deal with indecomposable, initialized aCM bundles of rank $2$. We already know that this study is meaningful only if $\deg(F)\ge3$: the main result of the aforementioned paper \cite{Fa} is the full classification of these bundles when $\deg(F)=3$. 

When $\Pic(F)$ is generated by $\cO_F(h)$ and $\cE$ is an  indecomposable, initialized aCM bundle of rank $2$ on $F$ with $c_1(\cE)\cong \cO_F(ch)$, then  the restriction $3-\deg(F)\le c\le \deg(F)-1$ holds: the bound was essentially proved by C. Madonna
 (see \cite{Ma1} and \cite{Cs1}). The same result holds also without restriction on $\Pic(F)$, if we consider bundles $\cE$ with $c_1(\cE)\cong \cO_F(ch)$ and such that the zero--loci of their general sections have pure codimension $2$ (see \cite{Cs1}). 
Notice that many other indecomposable, initialized aCM bundles of rank $2$ not satisfying the restriction $c_1(\cE)\cong \cO_F(ch)$ can actually exist on $F$ when $\cO_F(h)$ does not generate $\Pic(F)$ as the results proved in \cite{Fa} show.

Nevertheless the study of indecomposable, initialized aCM bundles $\cE$ of rank $2$ on a surface $F\subseteq\p3$ satisfying the above property on their first Chern class is of intrinsic interest. 

Bundles with $c$ either $3-\deg(F)$, or $4-\deg(F)$ are easy to construct via the so called Serre correspondence. The problem of the existence of bundles with $c=\deg(F)-1$ is a challenging and intriguing problem. It is surprisingly related to the possibility of writing the equation of $F$ as the pfaffian of a skew--symmetric matrix with linear entries (see again \cite{Ct} or \cite{Bea}).

As explained above, the first non--completely known case is $\deg(F)=4$. In this case we know that if  $c_1(\cE)\cong \cO_F(ch)$ and the zero--locus of the general section of $\cE$ has pure codimension $2$, then $-1\le c\le 3$. 

On the one hand, E. Coskun, R.S. Kulkarni, Y. Mustopa completed recently the analysis of the case $c=3$ in \cite{C--K--M1}. They succeed to prove therein that each smooth quartic $F$ supports a family of dimension $14$ of simple (and often stable) indecomposable, initialized, aCM bundles $\cE$ of rank $2$ with $c_1(\cE)=\cO_F(3h)$ and $c_2(\cE)=14$. As a by--product they are able to prove that the polynomial $f$ defining a smooth quartic surface in $\p3$ is the pfaffian of a skew--symmetric matrix of linear forms.

On the other hand, the cases $c=-1,0$ are well--known and the case $c=1$ is not difficult to describe. In the present paper we focus our attention on the case $c=2$. In \cite{Cs2} we will deal with aCM bundles on the general determinantal quartic surface in $\p3$.

In Theorems \ref{t2Exotic}, \ref{t2gg} and \ref{t2ngg}, we show that each smooth quartic $F$ supports indecomposable, initialized, aCM bundles $\cE$ of rank $2$ with $c_1(\cE)=\cO_F(2h)$ and $c_2(\cE)=8$. Such bundles have a minimal free resolution over $\p3$ of the form
\begin{equation}
\label{resGeneral}
0\longrightarrow \cO_F(-2)^{\oplus4}\oplus\cO_F(-1)^{\oplus n-4}\mapright{\varphi}\cO_F^{\oplus4}\oplus\cO_F(-1)^{\oplus n-4}\longrightarrow\cE\longrightarrow0,
\end{equation}
where $n=4,6,8$ and $\varphi$ has skew--symmetric matrix $\Phi$ whose pfaffian $\pf(\Phi)$ defines an equation of $F$, i.e. $F=\{\ \pf(\Phi)=0\ \}$. 

While the cases $n=4,6$ occur on each smooth quartic surface, only determinantal quartics are endowed with bundles with $n=8$. As a by--product of the existence of bundles with $n=4$, one easily deduces that the polynomial $f$ defining a smooth quartic surface in $\p3$ is the pfaffian of a skew--symmetric matrix of quadratic forms, i.e. $f=q_1q_2+q_3q_4+q_5q_6
$ for suitable $q_1,\dots,q_6\in H^0\big(\p3,\cO_{\p3}(2)\big)$.

It is interesting to notice that while in the cases $n=4,6$ the zero--locus of a general section of $\cE$ has pure codimension $2$, in the case $n=8$ each non--zero section vanishes on a divisor. 

We conclude the paper by dealing with the stability properties of these bundles, showing, in particular, that the moduli space of initialized, indecomposable, aCM, stable rank $2$ bundles $\cE$ with $c_1(\cE)=\cO_F(2h)$ and $c_2(\cE)=8$ on $F$ is always non--empty (see Theorem \ref{tStable}).

The authors would like to thank E. Carlini and F. Galluzzi for several helpful discussions. We also thank the referee, whose suggestions allow us to considerably improve the exposition.

For all the notations and unproven results we refer to \cite{Ha2}.

\section{Preliminary results}
\label{sPreliminary}
From now on, in what follows, $F$ will denote an integral smooth quartic surface in $\p3$. Moreover, we will assume that $F=\{\ f=0\ \}$ where $f\in S:=k[x_0,x_1,x_2,x_3]$ is a suitable quartic form.

We are interested in dealing with initialized aCM bundles $\cE$ of rank $2$ with $c_1(\cE)=\cO_F(2h)$ on $F$. The first part of Theorem B of \cite{Bea} implies the existence of a minimal free resolution of $\cE$ of the form
$$
0\longrightarrow\bigoplus_{j=1}^n\cO_{\p3}(d_j-2)\mapright\varphi\bigoplus_{i=1}^n\cO_{\p3}(-d_i)\longrightarrow\cE\longrightarrow0
$$
where we assume $0\le d_i\le d_{i+1}$ for $i=1,\dots,n-1$, and $\varphi$ is defined by a skew--symmetric matrix $\Phi$ such that $\pf(\Phi)=f$. Notice that the $(i,j)$--entry of $\Phi$ is a form of degree $2-d_i-d_j$. In particular $\pf(\Phi)$ is a form of degree $n-\sum_{i=1}^nd_i$, whence
\begin{equation}
\label{number1}
n-4=\sum_{i=1}^nd_i.
\end{equation}

Since $h^0\big(F,\cE\big)\ne0$, it follows that $d_1=0$. Moreover, the minimality of the above resolution implies that all the non--zero entries have degree at least $1$. Thus, in order to avoid that the $n^{\mathrm {th}}$ row in $\Phi$ vanishes, we must have $2-d_n\ge1$ hence $d_n\le1$. We conclude that the $d_j$'s are either $0$ or $1$. Equality \eqref{number1} implies that $d_1=d_2=d_3=d_4=0$ and $d_j=1$ when $j\ge5$. In particular
\begin{equation}
\label{matrixPhi}
\Phi=\left(\begin{array}{cc}
B&A\\
-{}^tA&0
\end{array}
\right)
\end{equation}
where $B$ is a $4\times4$ skew--symmetric matrix with quadratic entries, $A$ is a $4\times (n-4)$ matrix with linear entries and $0$ is the $(n-4)\times(n-4)$ zero--matrix.

We deduce that $\rk(\Phi)\le\rk(B\ A)+\rk(A)\le 8$ at each point of $\p3$. Since $\det(\Phi)=f^2\ne0$, it follows that $n\le8$ and it is even. Thus the minimal free resolution of $\cE$ is Resolution \eqref{resGeneral} and $F=\{\ \pf(\Phi)=0\ \}$. The above discussion proves the first part of the following statement.

\begin{proposition}
\label{pResolution}
Let $F\subseteq\p3$ be a smooth quartic surface.

A sheaf $\cE$ is an initialized, aCM rank $2$ bundle on $F$ with $c_1(\cE)=\cO_F(2h)$ and $c_2(\cE)=8$ if and only if its minimal free resolution on $\p3$ is Resolution \eqref{resGeneral} with $n=4,6,8$ and $\varphi$ is skew--symmetric.  
\end{proposition}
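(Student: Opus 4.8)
Since the discussion preceding the statement already settles the direct implication, I would only have to prove the converse: starting from a coherent sheaf $\cE$ whose minimal free resolution over $\p3$ is \eqref{resGeneral} with $n\in\{4,6,8\}$ and $\varphi$ given by a skew--symmetric matrix $\Phi$ satisfying $\pf(\Phi)=f$, I must recover all of the asserted properties. First I would note that $\det(\Phi)=\pf(\Phi)^2=f^2\ne0$, so $\varphi$ is injective and \eqref{resGeneral} is a genuine short exact sequence $0\to P_1\xrightarrow{\varphi}P_0\to\cE\to0$ of sheaves on $\p3$, with $P_0=\cO_{\p3}^{\oplus4}\oplus\cO_{\p3}(-1)^{\oplus n-4}$ and $P_1=\cO_{\p3}(-2)^{\oplus4}\oplus\cO_{\p3}(-1)^{\oplus n-4}$. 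To see that $\cE$ is supported on $F$ I would invoke the pfaffian--adjoint identity $\Phi\,\Psi=\Psi\,\Phi=\pf(\Phi)\,I_n=f\,I_n$, where $\Psi$ is the matrix of signed submaximal pfaffians of $\Phi$: writing $\tilde v\in P_0$ for a local lift of $v\in\cE$, it gives $f\,\tilde v=\Phi(\Psi\tilde v)\in\im(\varphi)$, whence $f\cdot\cE=0$ and $\cE$ is an $\cO_F$--module.

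The key structural step is to upgrade $\cE$ from a sheaf to a rank $2$ vector bundle on $F$. For every $p\in F$ the localized sequence is still a length--one free resolution, and $\cE_p\ne0$ is supported in codimension $1$, so $\cE_p$ cannot be $\cO_{\p3,p}$--free; hence $\mathrm{pd}_{\cO_{\p3,p}}(\cE_p)=1$. By the Auslander--Buchsbaum formula $\mathrm{depth}_{\cO_{\p3,p}}(\cE_p)=3-1=2$, and since depth is insensitive to the change of rings $\cO_{\p3,p}\twoheadrightarrow\cO_{F,p}$ this equals $\mathrm{depth}_{\cO_{F,p}}(\cE_p)=2=\dim\cO_{F,p}$. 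Thus $\cE_p$ is a maximal Cohen--Macaulay module over the \emph{regular} local ring $\cO_{F,p}$ — this is exactly where smoothness of $F$ enters — and is therefore free. As its generic rank is $2$ (read off from the leading coefficient of the Hilbert polynomial $\chi\big(\cE(th)\big)=4(t+1)^2$ computed below, or directly because $\Phi$ drops rank by precisely $2$ at a general point of $F$), the sheaf $\cE$ is locally free of rank $2$. I expect this passage to be the main obstacle, in that it is the only point requiring an argument rather than a computation.

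The cohomological conditions then fall out of the exact sequence. Twisting by $t$ and using $H^1\big(\p3,\cO_{\p3}(m)\big)=H^2\big(\p3,\cO_{\p3}(m)\big)=0$ for all $m$, the long exact sequence immediately yields $H^1\big(F,\cE(th)\big)=0$ for every $t\in\bZ$, so $\cE$ is aCM. The same sequence for $t=-1$ gives $H^0\big(F,\cE(-h)\big)=0$ (both $H^0(P_0(-1))$ and $H^0(P_1(-1))$ vanish), and for $t=0$ it gives $H^0\big(F,\cE\big)\cong H^0(P_0)=k^{\oplus4}\ne0$, so $\cE$ is initialized. Computing Euler characteristics term by term I would find that the $(n-4)$ summands cancel and $\chi\big(\cE(th)\big)=4(t+1)^2$ independently of $n$, which already explains why the invariants do not depend on the size of the matrix; in particular $\chi(\cE)=4$.

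It remains to pin down the Chern classes. For $c_1$ I would exploit the self--duality forced by the skew--symmetry of $\Phi$: since $P_0^\vee\cong P_1(2)$ and $P_1^\vee\cong P_0(2)$, dualizing the resolution and using $\Hom_{\cO_{\p3}}(\cE,\cO_{\p3})=0$ gives $\Ext^1_{\cO_{\p3}}(\cE,\cO_{\p3})\cong\coker\big({}^{t}\Phi\colon P_0^\vee\to P_1^\vee\big)\cong\cE(2)$. On the other hand, Grothendieck duality along $F\hookrightarrow\p3$, together with $\omega_F=\omega_{\p3}(4)\otimes\cO_F=\cO_F$, gives $\Ext^1_{\cO_{\p3}}(\cE,\cO_{\p3})\cong\cE^\vee(4)$. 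Comparing the two yields $\cE^\vee\cong\cE(-2h)$; taking determinants of this rank--two isomorphism gives $(\det\cE)^{2}\cong\cO_F(4h)$, and since $\Pic(F)$ is torsion--free ($F$ being a K3 surface) I conclude $c_1(\cE)=\det\cE\cong\cO_F(2h)$. Finally Riemann--Roch on the K3 surface $F$ reads $\chi(\cE)=\tfrac12 c_1^2-c_2+2\,\rk(\cE)$; substituting $\chi(\cE)=4$, $\rk(\cE)=2$ and $c_1^2=(2h)^2=16$ forces $c_2(\cE)=8$, completing the converse. Alternatively, both the local freeness and the value of $c_1$ can be obtained by appealing directly to Beauville's Theorem B in \cite{Bea}.
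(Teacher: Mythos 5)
Your proof is correct, and it diverges from the paper's precisely at the point you flagged as the main obstacle. The paper disposes of the converse in a few lines by citing the second part of Theorem B of \cite{Bea}, which yields in one stroke that $\cE$ is an aCM torsion--free sheaf with $c_1(\cE)\cong\cO_F(2h)$; smoothness of $F$ then upgrades torsion--free to locally free, $\rk(\cE)=2$ is read off from $\det(\Phi)=f^2$, and $c_2(\cE)=8$ follows from Riemann--Roch applied to $\chi(\cE(-h))=0$ (your $\chi(\cE)=4$ is the same computation shifted by one twist). You instead unpack the content of that citation: the pfaffian--adjoint identity for the support, Auslander--Buchsbaum plus maximal Cohen--Macaulayness over the regular local ring $\cO_{F,p}$ for local freeness, the long exact sequence of the resolution for the aCM and initialized properties, and the skew--duality comparison $\Ext^1_{\cO_{\p3}}(\cE,\cO_{\p3})\cong\cE(2)\cong\cE^\vee(4h)$ for $c_1$. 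What the paper's route buys is brevity; what yours buys is self--containedness, an explicit identification of where smoothness of $F$ enters, and, as a bonus, the self--duality $\cE^\vee\cong\cE(-2h)$ in a form stronger than the mere value of $c_1$ --- note that the paper itself tacitly uses $h^0\big(F,\cE^\vee(h)\big)=h^0\big(F,\cE(-h)\big)$, which it can only justify once rank and $c_1$ are known, whereas your duality produces it directly. Two small remarks: extracting $\det\cE\cong\cO_F(2h)$ from $(\det\cE)^{\otimes2}\cong\cO_F(4h)$ via torsion--freeness of $\Pic(F)$ is legitimate (the paper quotes $\Pic(F)\cong\NS(F)$ from \cite{SD}); and your parenthetical fallback that ``$\Phi$ drops rank by precisely $2$ at a general point of $F$'' would itself require an argument, but your primary derivation of the rank from the leading coefficient of $\chi\big(\cE(th)\big)=4(t+1)^2$ is complete, so nothing is missing.
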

\begin{proof}
We have to prove that if the minimal free resolution of $\cE$ on $F$ is Resolution \eqref{resGeneral} with $n=4,6,8$, then $\cE$ is an initialized, aCM bundle of rank $2$ on $F$ with $c_1(\cE)=\cO_F(2h)$ and $c_2(\cE)=8$.

Clearly $h^0\big(F,\cE\big)=4$ and $h^0\big(F,\cE(-h)\big)=0$, thus $\cE$ is initialized. The second part of Theorem B of \cite{Bea} implies that $\cE$ is an aCM torsion--free sheaf with $c_1(\cE)\cong\cO_F(2h)$, hence it is a bundle because $F$ is smooth. Thus $h^1\big(F,\cE(-h)\big)=0$ and $h^2\big(F,\cE(-h)\big)=h^0\big(F,\cE^\vee(h)\big)=h^0\big(F,\cE(-h)\big)=0$. Since $\det(\Phi)$ is the square of the equation of $F$, it follows that $\rk(\cE)=2$. 

For each locally free sheaf $\cF$ of rank $r$ on $F$, Riemann--Roch theorem gives
\begin{equation}
\label{RRgeneral}
\chi(\cF)=2r+\frac{c_1(\cF)^2}2-c_2(\cF).
\end{equation}
Thanks to the above computations we know that $\chi(\cE(-h))=0$, hence  the above formula finally yields $c_2(\cE)=8$.
\end{proof}

\section{Line bundles on smooth quartic surfaces}
\label{sQuartic}
In this section we will recall some facts about line bundles on a smooth quartic surface $F\subseteq\p3$, most of them collected from several papers (see \cite{C--K--M2}, \cite{SD}, \cite{Wa}: we also mention the book \cite{Hu}). 

By adjunction in $\p3$ we have $\omega_F\cong\cO_F$, i.e. each such an $F$ is a $K3$ surface, thus we can make use of all the results proved in \cite{SD}. The first important fact is that Serre duality on $F$ becomes $h^i\big(F,\cF\big)=h^{2-i}\big(F,\cF^\vee\big)$, $i=0,1,2$, for each locally free sheaf $\cF$ on $F$. 
If $D$ is an effective divisor on $F$, then $h^2\big(F,\cO_F(D)\big)=h^0\big(F,\cO_F(-D)\big)=0$. Moreover
$$
h^1\big(F,\cO_F(D)\big)=h^1\big(F,\cO_F(-D)\big)=h^0\big(D,\cO_D\big)-1
$$
(see \cite{SD}, Lemma 2.2). It follows that for each irreducible effective divisor $D$ on $F$, the dimension of $\vert D\vert$ and the arithmetic genus of $D$ satisfy
\begin{equation}
\label{h^1}
h^0\big(F,\cO_F(D)\big)=2+\frac {D^2}2,\qquad p_a(D)=1+\frac {D^2}2,
\end{equation}
(see \cite{SD}, Paragraph 2.4).

We summarize the most helpful results in the following statement.

\begin{proposition}
\label{SD}
Let $F\subseteq\p3$ be a smooth quartic surface.

For each effective divisor $D$ on $F$ such that $\vert D\vert$ has no fixed components the following assertions hold. 
\begin{enumerate}
\item $D^2\ge0$ and $\cO_F(D)$ is globally generated.
\item If $D^2>0$, then the general element of $\vert D\vert$ is irreducible and smooth: in this case  $h^1\big(F,\cO_F(D)\big)=0$.
\item If $D^2=0$, then there is an irreducible curve $\overline{D}$ with $p_a(\overline{D})=1$ such that $\cO_F(D)\cong\cO_F(e\overline{D})$ where $e-1:=h^1\big(F,\cO_F(D)\big)$: in this case the general element of $\vert D\vert$ is smooth.
\end{enumerate}
\end{proposition}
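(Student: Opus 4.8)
The plan is to recognize this Proposition as a repackaging of the structure theory of linear systems on K3 surfaces, so the backbone will be the results of \cite{SD}, combined with the cohomological identities already recorded above and the Riemann--Roch formula \eqref{RRgeneral} specialized to a line bundle $\cO_F(D)$, which reads $\chi\big(\cO_F(D)\big)=2+D^2/2$. First I would settle assertion (1). Since $\vert D\vert$ has no fixed component, two general members $D_1,D_2\in\vert D\vert$ share no common component, hence meet properly, giving $D^2=D_1\cdot D_2\ge0$; the same hypothesis makes $D$ nef, because an irreducible curve $C$ with $D\cdot C<0$ would lie in every member of $\vert D\vert$ and would therefore be a fixed component. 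For the global generation I would invoke the base--point--freeness theorem for K3 surfaces (\cite{SD}, see also \cite{Hu}): a complete linear system without fixed components on a K3 surface has empty base locus, so $\cO_F(D)$ is globally generated.

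For assertion (2), assume $D^2>0$. By the irreducibility part of Saint--Donat's analysis (\cite{SD}) the general member $D'\in\vert D\vert$ is an irreducible curve, and since $\cO_F(D)$ is base--point free by (1), Bertini's theorem makes $D'$ smooth as well. Applying the identity \eqref{h^1} to the integral divisor $D'$---equivalently, using $h^1\big(F,\cO_F(D')\big)=h^0\big(D',\cO_{D'}\big)-1=0$ because $D'$ is reduced and connected---together with $\cO_F(D)\cong\cO_F(D')$ yields $h^1\big(F,\cO_F(D)\big)=0$. One may cross--check this via \eqref{RRgeneral}: there $h^0=\chi=2+D^2/2$ while $h^2\big(F,\cO_F(D)\big)=0$ by the stated Serre--duality vanishing, forcing $h^1=0$.

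For assertion (3), assume $D^2=0$. Here the key input from \cite{SD} is that a fixed--component--free system of self--intersection zero is composed with an elliptic pencil: there is an irreducible curve $\overline D$ with $\overline D^2=0$ and $p_a(\overline D)=1$ whose complete system $\vert\overline D\vert$ is a base--point--free pencil, and $\cO_F(D)\cong\cO_F(e\overline D)$ for some $e\ge1$; the general member of $\vert D\vert$ is then a disjoint union of $e$ smooth fibres of the genus--one fibration defined by $\vert\overline D\vert$, hence smooth. To pin down $e$ in terms of $h^1$ I would combine $h^2\big(F,\cO_F(D)\big)=0$ with \eqref{RRgeneral}, giving $h^0\big(F,\cO_F(D)\big)=2+h^1\big(F,\cO_F(D)\big)$; on the other hand the fibration $\pi\colon F\to\p1$ with $\cO_F(\overline D)\cong\pi^\ast\cO_{\p1}(1)$ gives $h^0\big(F,\cO_F(e\overline D)\big)=h^0\big(\p1,\cO_{\p1}(e)\big)=e+1$. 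Comparing the two expressions yields $e-1=h^1\big(F,\cO_F(D)\big)$, as asserted.

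The genuinely hard content---and the step I expect to be the main obstacle were one to argue from scratch rather than cite \cite{SD}---is the dichotomy itself: proving that for $D^2>0$ the general member is irreducible, and that for $D^2=0$ the system descends to a multiple of a single genus--one curve. Both rest on the Hodge index theorem and on the vanishing analyzed through Lemma 2.2 of \cite{SD}. Since these are precisely the results the present section is designed to quote, I would treat them as established and confine the new work to the short cohomological bookkeeping above.
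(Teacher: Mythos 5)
Your proposal is correct and takes essentially the same route as the paper, whose entire proof is the citation ``See \cite{SD}, Proposition 2.6 and Corollary 3.2'': you rely on exactly these results of Saint--Donat for all the substantive content (base--point freeness, the irreducibility dichotomy, and the elliptic--pencil structure when $D^2=0$). The additional bookkeeping you supply---$D^2\ge0$ from two general members meeting properly, $h^1=0$ via Lemma 2.2 of \cite{SD}, and $e-1=h^1\big(F,\cO_F(D)\big)$ by comparing $h^0\big(F,\cO_F(e\overline{D})\big)=e+1$ with Riemann--Roch---is accurate but is already contained in the cited statements.
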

\begin{proof}
See \cite{SD}, Proposition 2.6 and Corollary 3.2.
\end{proof}

Now we turn our attention to aCM line bundles on $F$ recalling the important results from \cite{Wa}. A first obvious fact is that for each  divisor $D$ on $F$, then $\cO_F(D)$ is aCM if and only if the same is true for $\cO_F(-D)$.

The main results from \cite{Wa} are summarized in the following statements.

\begin{lemma}
Let $F\subseteq\p3$ be a smooth quartic surface.

For each effective divisor $D$ on $F$ with $D^2\ge0$ the following assertions hold
\begin{enumerate}
\item If $D\ne0$, then $hD\ge3$.
\item If $hD=3$, then $\cO_F(D)$ is globally generated.
\end{enumerate}
\end{lemma}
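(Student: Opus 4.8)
The plan is to prove (1) by ruling out the two smallest possible values $hD=1$ and $hD=2$ directly, and then to deduce (2) from (1) together with Proposition \ref{SD}, by analysing the fixed part of the linear system $\vert D\vert$.

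For (1), I would first note that, since $F$ is embedded in $\p3$ by the very ample class $h$ and $D$ is a nonzero effective divisor, $hD=\deg(D)\ge1$; so it suffices to exclude $hD\in\{1,2\}$. Recall that on the $K3$ surface $F$ adjunction reads $D^2=2p_a(D)-2$, whence any line $L\subseteq F$ has $L^2=-2$ and any smooth conic $C\subseteq F$ has $C^2=-2$. If $hD=1$ then $D$ is a line and $D^2=-2<0$; if $hD=2$ then $D$ is either a smooth conic (so $D^2=-2$), or a double line $2L$ (so $D^2=4L^2=-8$), or a sum $L_1+L_2$ of two distinct lines, in which case $D^2=-4+2L_1L_2\le-2$ because two distinct lines of $\p3$ meet in at most one point, so $L_1L_2\le1$. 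In every case $D^2\le-2<0$, contradicting $D^2\ge0$; hence $hD\ge3$. Alternatively, for the range $D^2>0$ one may invoke the Hodge index inequality $(hD)^2\ge h^2D^2=4D^2\ge8$, using that the N\'eron--Severi lattice of a $K3$ surface is even; but the substantive content is the exclusion of the degenerate curves of degree $\le2$.

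For (2), assume $hD=3$. Riemann--Roch \eqref{RRgeneral}, together with $h^2\big(F,\cO_F(D)\big)=h^0\big(F,\cO_F(-D)\big)=0$, gives $h^0\big(F,\cO_F(D)\big)=2+\frac{D^2}2+h^1\big(F,\cO_F(D)\big)\ge2$, so $\dim\vert D\vert\ge1$. I would then split off the fixed part, writing $\vert D\vert=\vert M\vert+\Phi$ with $\Phi\ge0$ the divisorial fixed component and $\vert M\vert$ free of fixed components and of the same dimension $\ge1$; in particular $M$ is effective and $M\not\sim0$. Proposition \ref{SD}(1) applied to $M$ yields $M^2\ge0$ and the global generation of $\cO_F(M)$, and part (1) then forces $hM\ge3$. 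Since $hM=hD-h\Phi=3-h\Phi$ with $h\Phi\ge0$, this is possible only if $h\Phi=0$, i.e. $\Phi=0$ because $h$ is ample and $\Phi$ is effective. Thus $\vert D\vert$ itself has no fixed component, and Proposition \ref{SD}(1) gives that $\cO_F(D)=\cO_F(M)$ is globally generated.

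The main obstacle is the case $hD=2$ of part (1): there the Hodge index theorem is powerless (it only returns $(hD)^2\ge0$), and one must instead enumerate the degenerate effective divisors of degree $2$ and bound their self--intersection, the key elementary input being that two distinct lines of $\p3$ meet in at most one point. Once (1) is available, part (2) is essentially formal, the only points needing care being that the moving part $M$ is genuinely nonzero (guaranteed by $\dim\vert D\vert\ge1$) and that Proposition \ref{SD} indeed applies to $\vert M\vert$.
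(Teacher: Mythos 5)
Your proof is correct, but it is worth noting that the paper itself gives no argument at all for this lemma: its ``proof'' is the single line ``See \cite{Wa}, Lemma 2.1 and Corollary 2.1'', deferring entirely to Watanabe. What you have produced is therefore a self-contained substitute, and it holds up under scrutiny. For (1), the enumeration of effective divisors of degree $\le 2$ is complete (a divisor on a smooth surface is a nonnegative combination of prime divisors, so degree $1$ forces a line, and degree $2$ forces an irreducible --- hence plane, hence smooth --- conic, a double line, or two lines), adjunction $D^2=2p_a(D)-2$ on the $K3$ gives $L^2=C^2=-2$, and the bound $L_1L_2\le 1$ is legitimate because two distinct lines meet, if at all, transversally at a single point, so the local intersection multiplicity on $F$ is $1$; every case yields $D^2\le-2$, contradicting $D^2\ge0$. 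Your Hodge-index remark is accurate as stated, including its honest admission that it is powerless when $D^2=0$. For (2), the two delicate points are exactly the ones you flag and both are handled: $h^2\big(F,\cO_F(D)\big)=h^0\big(F,\cO_F(-D)\big)=0$ plus Riemann--Roch \eqref{RRgeneral} gives $h^0\ge 2+D^2/2\ge2$, so the moving part $M$ satisfies $M\ne0$ and $\dim\vert M\vert\ge1$; Proposition \ref{SD}(1) then supplies $M^2\ge0$ (so part (1), proved independently, applies to $M$ with no circularity), and $hM=3-h\Phi\ge3$ with $h$ ample kills the fixed part, after which Proposition \ref{SD}(1) --- which on a $K3$ really does give base-point-freeness for fixed-component-free systems, even when the self-intersection is $0$ --- finishes. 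In short: the paper buys brevity by citation; your argument buys transparency, at the modest cost of the degree-$\le2$ case analysis, and would serve as a correct inline proof.
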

\begin{proof}
See \cite{Wa}, Lemma 2.1 and Corollary 2.1.
\end{proof}

\begin{proposition}
\label{pWa}
Let $F\subseteq\p3$ be a smooth quartic surface.

For each effective divisor $D$ on $F$ with $D\ne0$, then $\cO_F(D)$ is initialized and aCM if and only if one of the following cases occurs.
\begin{enumerate}[(a)]
\item $D^2=-2$ and $1\le hD\le3$.
\item $D^2=0$ and $3\le hD\le4$.
\item $D^2=2$ and $hD=5$.
\item $D^2=4$, $hD=6$ and $h^0\big(F,\cO_F(D-h)\big)=h^0\big(F,\cO_F(2h-D)\big)=0$.
\end{enumerate}
\end{proposition}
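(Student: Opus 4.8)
The plan is to push everything through Riemann--Roch on the $K3$ surface $F$, using Serre duality $h^i\big(F,\cO_F(G)\big)=h^{2-i}\big(F,\cO_F(-G)\big)$ and $h^2=4$. Set $q(s):=\chi\big(F,\cO_F(sh-D)\big)=2s^2-(hD)s+\big(2+\tfrac{D^2}2\big)$. First I record two consequences of the hypotheses. Since $D$ is effective and nonzero, $-D$ is not effective, so $h^2\big(F,\cO_F(D)\big)=0$ and $h^0\big(F,\cO_F(D)\big)=q(0)=2+D^2/2\ge1$, forcing $D^2\ge-2$ (and $D^2$ even, as the $K3$ lattice is even); also $hD\ge1$ as $h$ is very ample. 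Since $\cO_F(D)$ is initialized, $h^0\big(F,\cO_F(D-h)\big)=0$, hence $h^0\big(F,\cO_F(D-sh)\big)=0$ for all $s\ge1$; by Serre duality $h^2\big(F,\cO_F(sh-D)\big)=0$, and the aCM hypothesis $h^1=0$ yields the key identity $h^0\big(F,\cO_F(sh-D)\big)=q(s)$ for every $s\ge1$.

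For the \emph{only if} direction I introduce the least integer $m\ge1$ with $h^0\big(F,\cO_F(mh-D)\big)>0$. By the identity, $q(s)=0$ for $1\le s\le m-1$ and $q(m)>0$, while $q(0)>0$; since $q$ is a quadratic with positive leading coefficient it has at most two roots, so $m\le3$. The divisor $D':=mh-D$ is again effective, nonzero, aCM and initialized (aCM is preserved by $G\mapsto-G$ and by twisting with $h$; $h^0(D')=q(m)>0$; and $h^0(D'-h)=h^0\big((m-1)h-D\big)=0$ by minimality of $m$). If $m=1$, then $hD'=4-hD\ge1$ gives $hD\le3$, and Hodge index $(hD)^2\ge4D^2$ forces $D^2\le2$: for $D^2=-2$ one gets $1\le hD\le3$; for $D^2=0$ the preceding Lemma gives $hD\ge3$, so $hD=3$; the subcase $D^2=2$ is excluded because the Lemma forces $hD=3$, whence $D'=h-D$ is effective with $(D')^2=0$ but $hD'=1<3$, contradicting the Lemma. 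If $m=2$, then $q(1)=0$ gives $hD=4+D^2/2$, while $h^0(D')=q(2)=2-D^2/2>0$ forces $D^2\le2$, producing exactly $(D^2,hD)\in\{(-2,3),(0,4),(2,5)\}$. If $m=3$, then $q(1)=q(2)=0$ forces $q(s)=2(s-1)(s-2)$, i.e. $D^2=4$ and $hD=6$, and here $h^0(D-h)=0$ and $h^0(2h-D)=q(2)=0$ hold automatically. Assembling the possibilities reproduces exactly the cases (a)--(d).

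For the \emph{if} direction, initialization is elementary: $h^0\big(F,\cO_F(D)\big)\neq0$ as $D$ is effective, and $h^0\big(F,\cO_F(D-h)\big)=0$ because a putative effective $E=D-h$ has $hE=hD-4$, which is negative in (a) and in (b) with $hD=3$; equals $0$ (forcing $E=0$, $D=h$, $D^2=4$, impossible) in (b) with $hD=4$; and in (c) forces $E$ to be a line with $E^2=-2$, whence $D^2=(h+E)^2=4\neq2$, while in (d) this vanishing is assumed. It remains to prove aCM, i.e. $h^1\big(F,\cO_F(D+th)\big)=0$ for all $t$. Splitting at $t=0$ and using $h^2(D+th)=0$ for $t\ge0$ together with $h^2(sh-D)=h^0(D-sh)=0$ for $s\ge1$, this is equivalent to non-speciality of $D+th$ ($t\ge0$) and of $sh-D$ ($s\ge1$); both hold for large $|t|,s$ by Serre vanishing, leaving only finitely many twists near the transition. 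These I would settle with Saint-Donat's results in Proposition \ref{SD}: decomposing the relevant systems into moving and fixed parts and using $p_a(C)=1+C^2/2$ for an irreducible curve $C\subset\p3$, the numerical constraints rule out every configuration that could create cohomology—for instance in the $D^2=0$ cases they force the integer $e$ of Proposition \ref{SD}(3) to be $1$, so $h^1=e-1=0$.

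The main obstacle is precisely this last step, the aCM vanishing in the \emph{if} direction: the extreme twists are automatic, but controlling $h^1$ in the narrow transition range requires excluding special linear systems, and here the exact numerical values (and, for (d), the two extra vanishing hypotheses) are indispensable, since an effective divisor with the same $D^2$ but a different $hD$, or with $h^1\big(F,\cO_F(D)\big)\ne0$, would fail to be aCM. By contrast the \emph{only if} direction is essentially formal once the identity $h^0(sh-D)=q(s)$ and the observation that $q$ has at most two roots are in place.
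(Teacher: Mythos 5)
The paper offers no internal argument for this proposition: its ``proof'' is the single line ``See \cite{Wa}, Theorem 1.1'', so your attempt cannot be matched step-by-step against anything in the text — you are effectively reproving Watanabe's theorem. Judged on its own terms, your \emph{only if} direction is complete and correct: the identity $h^0\big(F,\cO_F(sh-D)\big)=q(s)$ for $s\ge1$ (from initializedness, Serre duality and the aCM vanishing), the bound $m\le3$ because the quadratic $q$ has at most two roots, and the analysis of $m=1,2,3$ — including the exclusion of $D^2=2$ when $m=1$ via the Hodge index inequality $(hD)^2\ge4D^2$ and the unlabeled lemma preceding the proposition (effective $D\ne0$ with $D^2\ge0$ forces $hD\ge3$) — do land exactly on the list (a)--(d). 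This half is a clean, self-contained Riemann--Roch argument and I find no error in it.

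The \emph{if} direction, however, contains a genuine gap, which you yourself flag: the aCM vanishing $h^1\big(F,\cO_F(D+th)\big)=0$ in the transition twists is a plan, not a proof. The one concrete mechanism you invoke — Proposition \ref{SD}(3) forcing $e=1$ in the $D^2=0$ cases — applies only when $\vert D\vert$ has no fixed components, and that hypothesis genuinely fails inside case (b): by Lemma \ref{lQuartic}(2) there are quartics $D=C+L$ with a fixed line $L$, and the proposition asserts these too are aCM; your ``decompose into moving and fixed parts'' is named but never executed. Even in the fixed-component-free cases your mechanism addresses only the single twist $t=0$, while twists such as $h^1\big(F,\cO_F(D-h)\big)=h^1\big(F,\cO_F(h-D)\big)$ (where $h-D$ may fail to be effective, or may be a residual line) need separate arguments — e.g.\ via $0\to\cO_F(th)\to\cO_F(D+th)\to\cO_D(D+th)\to0$ together with the Saint-Donat identity $h^1\big(F,\cO_F(D)\big)=h^0\big(D,\cO_D\big)-1$ quoted in Section \ref{sQuartic} and connectedness/adjunction counts for the possibly reducible or non-reduced members of each class. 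Finally, the appeal to Serre vanishing for large twists is ineffective as stated: you never bound the transition range, so ``finitely many remaining twists'' is not yet a checkable list. Until this casework over (a)--(d) is carried out, your proposal establishes only one implication of the stated equivalence — and it is precisely the unproved implication that constitutes the substance of Watanabe's Theorem 1.1, which the paper outsources by citation.
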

\begin{proof}
See \cite{Wa}, Theorem 1.1.
\end{proof}

Notice that the general smooth quartic surface $F\subseteq\p3$ does not support initialized aCM line bundles besides $\cO_F$. Indeed, in this case, $\Pic(F)$ is generated by $\cO_F(h)$, due to Noether--Lefschetz theorem. In particular, for each line bundle $\cO_F(D)$ on $F$ one has that both $Dh$ and $D^2$ are positive multiples of $4$.

We analyze the above effective divisors defining initialized and aCM line bundles on $F$. Recall that a curve $D\subseteq\p3$ is called aCM if $h^1\big(\p3,\cI_{D\vert\p3}(th)\big)=0$ for each $t\in\bZ$ (see \cite{Mi}, Lemma 1.2.3). A curve $D\subseteq\p3$ is projectively normal if and only if it is aCM and smooth.

The first important fact is the following (see Proposition 2.14 of \cite{C--K--M2}). 

\begin{lemma}
\label{laCM}
Let $F\subseteq\p3$ be a smooth quartic surface.

For each effective divisor $D$ on $F$, then $\cO_F(D)$ is aCM if and only if the curve $D$ is aCM in $\p3$.
\end{lemma}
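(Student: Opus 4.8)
The plan is to relate the cohomology of $\cO_F(-D)$ on $F$ directly to the cohomology of the ideal sheaf $\cI_{D\vert\p3}$ on $\p3$ by means of a single short exact sequence, and then to invoke the symmetry between the aCM property of $\cO_F(D)$ and that of $\cO_F(-D)$ recalled just above.

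First I would exploit the chain of inclusions $D\subseteq F\subseteq\p3$. Since $F=\{\ f=0\ \}$ is a quartic, one has $\cI_{F\vert\p3}\cong\cO_{\p3}(-4)$, and the quotient $\cI_{D\vert\p3}/\cI_{F\vert\p3}$ is exactly the ideal sheaf $\cI_{D\vert F}$ of $D$ inside $F$. Because $D$ is an effective Cartier divisor on the smooth surface $F$, this last sheaf is $\cO_F(-D)$. Thus I obtain on $\p3$ the short exact sequence
$$
0\longrightarrow\cO_{\p3}(-4)\longrightarrow\cI_{D\vert\p3}\longrightarrow\cO_F(-D)\longrightarrow0,
$$
where $\cO_F(-D)$ is understood via its push--forward along the closed immersion $F\hookrightarrow\p3$, which preserves cohomology.

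Next I would twist this sequence by $\cO_{\p3}(t)$ for an arbitrary $t\in\bZ$ and pass to the long exact cohomology sequence. The crucial input is that both $H^1\big(\p3,\cO_{\p3}(t-4)\big)$ and $H^2\big(\p3,\cO_{\p3}(t-4)\big)$ vanish for every $t$, since line bundles on projective space have no intermediate cohomology. This squeezes the long exact sequence into the isomorphisms
$$
H^1\big(\p3,\cI_{D\vert\p3}(th)\big)\cong H^1\big(F,\cO_F(th-D)\big)\qquad(t\in\bZ).
$$

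Finally I would read off the equivalence. By definition $D$ is aCM in $\p3$ precisely when the left--hand groups vanish for all $t$, while $\cO_F(D)$ is aCM precisely when $h^1\big(F,\cO_F(D+th)\big)=0$ for all $t$; as recalled above, the latter is equivalent to the vanishing of $h^1\big(F,\cO_F(th-D)\big)$ for all $t$, i.e. of the right--hand groups. The displayed isomorphisms therefore identify the two conditions, completing the proof. The only delicate point is the identification of the quotient $\cI_{D\vert\p3}/\cI_{F\vert\p3}$ with $\cO_F(-D)$ and the construction of the resulting exact sequence; once this is in place the vanishing of intermediate cohomology on $\p3$ makes the rest automatic.
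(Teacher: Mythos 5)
Your proposal is correct and is essentially the paper's own proof: the same exact sequence $0\to\cI_F\to\cI_D\to\cI_{D\vert F}\to0$ with $\cI_F\cong\cO_{\p3}(-4)$ and $\cI_{D\vert F}\cong\cO_F(-D)$, twisted and run through cohomology using the vanishing of intermediate cohomology of line bundles on $\p3$. Your final appeal to the symmetry between the aCM property of $\cO_F(D)$ and $\cO_F(-D)$ is exactly the Serre--duality step the paper performs via $h^1\big(F,\cO_F(D-th)\big)=h^1\big(F,\cI_{D\vert F}(th)\big)$, so there is no substantive difference.
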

\begin{proof}
We have the exact sequence
\begin{equation}
\label{seqInclusion}
0\longrightarrow \cI_F\longrightarrow \cI_D\longrightarrow \cI_{D\vert F}\longrightarrow 0.
\end{equation}
Since $\cI_F\cong\cO_{\p3}(-4)$ and $\cI_{D\vert F}\cong\cO_F(-D)$, then we deduce that $h^1\big(\p3,\cI_D(t)\big)=0$ if and only if $h^1\big(F, \cO_F(D-th)\big)=h^1\big(F, \cI_{D\vert F}(th)\big)=0$.
\end{proof}

In what follows we will deal with some of the aCM line bundles of Proposition \ref{pWa}. We start from examining the initialized aCM line bundles $\cO_F(D)$ with $D^2=4$ and $Dh=6$. The divisor $D$ is aCM due to Proposition \ref{pWa} and Lemma \ref{laCM}. Proposition 6.2 of \cite{Bea} shows that such line bundles have a minimal free resolution on $\p3$ of the form
\begin{equation}
\label{seqDeterminantal}
0\longrightarrow\cO_{\p3}(-1)^{\oplus4}\mapright{\varphi_0}\cO_{\p3}^{\oplus4}\longrightarrow\cO_F(D)\longrightarrow0,
\end{equation}
where the matrix $A$ of $\varphi_0$ satisfies $f=\det(A)$. In particular $\cO_F(D)$ is globally generated.

\begin{lemma}
\label{lSextic}
Let $F\subseteq\p3$ be a smooth quartic surface.

If $D$ is an effective divisor on $F$ with $h^0\big(F,\cO_F(D-h)\big)=h^0\big(F,\cO_F(2h-D)\big)=0$, $D^2=4$, $Dh=6$, then $\cO_F(D)$ is globally generated. The general element of $\vert D\vert$ is an irreducible, projectively normal, sextic curve of genus $3$.
\end{lemma}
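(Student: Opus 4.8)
The plan is to read everything off from the hypotheses, which are exactly those of case (d) of Proposition \ref{pWa}, together with the determinantal resolution recalled just above the statement. First I would observe that the conditions $D^2=4$, $Dh=6$ and $h^0(F,\cO_F(D-h))=h^0(F,\cO_F(2h-D))=0$ place us precisely in case (d) of Proposition \ref{pWa}, so $\cO_F(D)$ is initialized and aCM; by Lemma \ref{laCM} the curve $D$ is therefore aCM in $\p3$. Global generation is then immediate: Resolution \eqref{seqDeterminantal} exhibits $\cO_F(D)$ as the cokernel of a map of free sheaves on $\p3$, hence as a quotient of $\cO_{\p3}^{\oplus4}$; restricting to $F$ yields a surjection $\cO_F^{\oplus4}\twoheadrightarrow\cO_F(D)$, and since the source is globally generated so is the target. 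This settles the first assertion.

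For the second assertion I would first secure irreducibility and smoothness of the general member. Global generation means $\vert D\vert$ is base--point free, so it has no fixed components, and since $D^2=4>0$ Proposition \ref{SD}(2) applies and tells us that the general element of $\vert D\vert$ is irreducible and smooth (and moreover $h^1(F,\cO_F(D))=0$). The degree in $\p3$ is $Dh=6$, so this general element is a sextic curve. Its arithmetic genus is computed from \eqref{h^1} as $p_a(D)=1+D^2/2=3$, and since the general member is smooth its geometric genus equals its arithmetic genus, giving genus $3$.

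It remains to establish projective normality of the general element, which is the one point requiring a little care. A smooth curve in $\p3$ is projectively normal precisely when it is aCM, so I need the general smooth member of $\vert D\vert$ to be aCM. The key observation is that, by Lemma \ref{laCM}, aCM--ness of a curve $D'\subseteq\p3$ lying on $F$ is equivalent to the vanishing $h^1(F,\cO_F(D'-th))=0$ for all $t$, a condition depending only on the isomorphism class of $\cO_F(D')$, hence only on the linear equivalence class of $D'$. Since all members of $\vert D\vert$ are linearly equivalent and we already know $\cO_F(D)$ is aCM, every member of $\vert D\vert$ is aCM in $\p3$. Combining this with the smoothness of the general member from the previous step, the general element of $\vert D\vert$ is a smooth aCM curve, hence projectively normal. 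The main (mild) obstacle is exactly this passage: one must not conflate the aCM property, which a priori depends on the individual curve, with its cohomological reformulation on $F$, which is a linear--equivalence invariant; once this is recognized, projective normality of the generic smooth member follows at once.
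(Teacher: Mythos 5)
Your proof is correct and takes essentially the same route as the paper: the paper's one-line proof just invokes the discussion immediately preceding the lemma (case (d) of Proposition \ref{pWa} plus Lemma \ref{laCM} giving aCM-ness, and the determinantal resolution \eqref{seqDeterminantal} from \cite{Bea} giving global generation) together with Proposition \ref{SD}, which is exactly the chain you spell out. Your careful step on projective normality --- transferring the aCM property to every member of $\vert D\vert$ via the linear-equivalence invariance built into Lemma \ref{laCM}, then combining with smoothness of the general member --- is precisely the implicit content of the paper's ``follows immediately'' and is handled correctly.
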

\begin{proof}
The proof follows immediately from the above discussion and Proposition \ref{SD}.
\end{proof}

Take an effective divisor with $D^2=-2$ and $1\le Dh\le 3$. It is obvious that if $Dh=1$, then $D$ is an line. 

Let us consider the two remaining cases. In these cases the Hilbert polynomial of $D$ is $Dh t+1$, because $p_a(D)=0$ (see the second Equality \eqref{h^1}). It follows that if $Dh=2$, then $D$ is contained in a plane, thus it is a possibly reducible or non--reduced conic.

Let $Dh=3$. Recall that for each subscheme $X\subseteq F$ we have the exact sequence
\begin{equation}
\label{seqStandard}
0\longrightarrow\cI_{X\vert F}\longrightarrow\cO_F\longrightarrow\cO_X\longrightarrow0.
\end{equation}
The cohomology of the above sequence for $X=D$, the fact that $\cO_F(D)$ is aCM and Lemma \ref{laCM} above, yield that $D$ is not contained in any plane (i.e. $h^0\big(F,\cO_F(h-D)\big)=h^0\big(F,\cI_{D\vert F}(h)\big)=0$).

The same argument shows that $D$ is contained in the intersection of three linearly independent quadrics. Since $D$ is an aCM cubic with $p_a(D)=0$, it follows that it is actually the intersection of these three quadrics. In particular the linear system $\vert 2h-D\vert$ has no fixed components, thus $\cO_F(2h-D)$ is globally generated and initialized. These remarks have the following helpful consequence.

\begin{lemma}
\label{lQuintic}
Let $F\subseteq\p3$ be a smooth quartic surface.

If $D$ is an effective divisor on $F$ with $D^2=2$, $Dh=5$, then $\cO_F(D)$ is globally generated. The general element of $\vert D\vert$ is an irreducible, projectively normal, quintic curve of genus $2$.
\end{lemma}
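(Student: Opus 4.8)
The plan is to imitate the structure of Lemma \ref{lSextic}, establishing global generation first and then the geometric description of the general element of $\vert D\vert$. Since $D$ is effective with $D^2=2>0$ and $Dh=5$, Proposition \ref{pWa}(c) tells us that $\cO_F(D)$ is initialized and aCM; by Lemma \ref{laCM} the curve $D$ is then aCM in $\p3$. First I would verify that $\vert D\vert$ has no fixed components, which is the hypothesis needed to invoke Proposition \ref{SD}. This follows from the aCM property: a fixed component would force a splitting $\cO_F(D)\cong\cO_F(D')\otimes\cO_F(C)$ with $C$ a fixed curve, contradicting either initialization or the fact that $Dh=5$ is minimal for this numerical class; more directly, one uses the cohomology of the standard sequence \eqref{seqStandard} together with $h^0\big(F,\cO_F(h-D)\big)=0$, exactly as in the discussion preceding Lemma \ref{lQuintic}, to rule out a planar fixed part.

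Once $\vert D\vert$ has no fixed components, Proposition \ref{SD}(1) gives that $\cO_F(D)$ is globally generated, and since $D^2=2>0$, Proposition \ref{SD}(2) tells us that the general element of $\vert D\vert$ is irreducible and smooth with $h^1\big(F,\cO_F(D)\big)=0$. This settles the global generation claim and the irreducibility and smoothness of the general member.

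Next I would compute the numerical invariants of the general member $C\in\vert D\vert$. Its degree in $\p3$ is $Ch=Dh=5$, so $C$ is a quintic curve. Its arithmetic genus is given by the second equality in \eqref{h^1}, namely $p_a(C)=1+\tfrac{D^2}2=1+1=2$; since $C$ is smooth and irreducible its geometric genus equals its arithmetic genus, so $C$ has genus $2$.

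Finally, to conclude that $C$ is \emph{projectively normal}, I would combine the smoothness of the general member with its aCM-ness. Recall from the discussion in the excerpt that a curve in $\p3$ is projectively normal precisely when it is both aCM and smooth. The curve $D$ is aCM in $\p3$ by Lemma \ref{laCM}, and aCM-ness is a property of the Hilbert function that is shared by the general member of a linear system whose general member is smooth; alternatively one checks it directly via the exact sequence \eqref{seqInclusion} for $X=C$, using $h^1\big(F,\cO_F(C-th)\big)=0$ for all $t$. The main obstacle I anticipate is the verification that $\vert D\vert$ has no fixed components, since this is exactly the input that \cite{SD} requires and it does not follow formally from the numerical data alone; one must genuinely use the aCM hypothesis and the vanishing of $h^0\big(F,\cO_F(h-D)\big)$, and care is needed because $D^2=2$ is small.
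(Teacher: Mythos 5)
Your skeleton (base-component freeness $\Rightarrow$ global generation via Proposition \ref{SD}, Bertini for irreducibility and smoothness, genus from the second equality in \eqref{h^1}, projective normality as aCM plus smooth) matches the paper's, but the one step you yourself flag as the crux --- that $\vert D\vert$ has no fixed components --- is precisely where your proposal has a genuine gap, and neither of the two justifications you sketch closes it. The ``splitting'' remark is not an argument: writing $D=M+C$ with $C$ the fixed part and $M$ the moving part contradicts neither initialization nor any ``minimality'' of $Dh=5$; indeed, the paper's own analysis of the case $D^2=0$, $Dh=4$ (Lemma \ref{lQuartic}) exhibits initialized aCM line bundles whose linear system really does have a fixed line, so numerical data of this kind simply do not preclude fixed components. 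Your ``more direct'' route also falls short: the cohomology of Sequence \eqref{seqStandard} together with $h^0\big(F,\cO_F(h-D)\big)=0$ shows only that the curve $D$ is not contained in a plane; it does not exclude a fixed \emph{component} of small degree (a line or a conic, say) with a moving residual part, which is exactly the scenario that must be ruled out.

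The paper closes this gap by residuation rather than by attacking $\vert D\vert$ directly. Since $D$ is aCM with Hilbert polynomial $5t-1$, it lies on a quadric, so $\vert 2h-D\vert\ne\emptyset$; an effective $C'\in\vert 2h-D\vert$ satisfies $C'^2=-2$, $C'h=3$, and the discussion preceding the lemma shows that such an aCM cubic with $p_a(C')=0$ is the scheme-theoretic intersection of the three linearly independent quadrics containing it. Consequently the quadrics through $C'$ have base locus exactly $C'$, so the residual system $\vert 2h-C'\vert=\vert D\vert$ has no fixed components; Proposition \ref{SD} then gives global generation, and the rest of your argument (Bertini, $p_a(D)=1+D^2/2=2$, projective normality $=$ aCM $+$ smooth, with every member of $\vert D\vert$ aCM by Lemma \ref{laCM} since aCM-ness depends only on the line bundle) goes through as in the paper. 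To repair your write-up you must either reproduce this residuation argument or supply some other genuine proof of base-component freeness; as it stands, the central claim is asserted, not proved.
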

\begin{proof}
From the second Equality \eqref{h^1}, we know that $p_a(D)=2$. The divisor $D$ is aCM (see Proposition \ref{pWa} and Lemma \ref{laCM}), $h^0\big(F,\cO_F(D-h)\big)=0$ and its Hilbert polynomial is $5t-1$, thus $\vert 2h-D\vert\ne\emptyset$. We have $(2h-D)^2=-2$ and $(2h-D)h=3$, "thus each effective element in $\vert 2h-D\vert$ is aCM (see again Proposition \ref{pWa} and Lemma \ref{laCM}). Due to the above discussion $\cO_F(D)=\cO_F(2h-(2h-D))$ is globally generated. Bertini theorem implies that the general element of $\vert D\vert$ is irreducible and smooth, hence it is projectively normal.
\end{proof}

We now look at the case $D^2=0$ and $Dh=4$. The divisor $D$ is aCM (see Proposition \ref{pWa} and Lemma \ref{laCM}), $h^0\big(F,\cO_F(D-h)\big)=0$ and its Hilbert polynomial is $4t$, thus $D$ is contained in the base locus of a pencil of quadrics. If equality holds, then $\vert 2h-D\vert$ has no fixed components, whence $\cO_F(2h-D)$ is globally generated.

Assume that $D$ is not the base locus of the pencil of quadrics containing it, i.e. $\cO_F(2h-D)$ is not globally generated. By degree reasons we necessarily have that all the quadrics of the pencil split in a fixed plane $H$ plus another plane varying in a pencil. In particular the base locus of this pencil of quadrics is $H$ and a simple line $L$. If $D\subseteq H$, then $D=F\cap H$ because they are both quartic curves, hence $D^2=4$, a contradiction.

It follows that $D=C+L$ for a suitable cubic curve $C\subseteq H$. In particular $L\subseteq F$ and $p_a(C)=1$, hence $L^2=-2$ and $C^2=0$. Thus the equality $0=(C+L)^2$ forces $CL=1$.

Notice that $H\cap F=C\cup L'$ for a second suitable line. Planes through $L'$ cut out on $F$ residually to $L'$ the pencil of plane cubics $\vert C\vert$. Since $\vert C\vert+L\subseteq\vert D\vert$ and they have the same dimension, then they coincide: in particular $\cO_F(D)$ is not globally generated. 

The equalities $(2h-D)^2=0$, $(2h-D)h=4$ and  $D=2h-(2h-D)$ imply that all the arguments above are still true also if we start from $\cO_F(D)$ instead of $\cO_F(2h-D)$.

The above discussion essentially proves the following lemma: we leave the very easy details to the reader.

\begin{lemma}
\label{lQuartic}
Let $F\subseteq\p3$ be a smooth quartic surface.

If $D$ is an effective divisor on $F$ with $D^2=0$, $Dh=4$, then $\cO_F(D)$ is globally generated if and only if the same is true for $\cO_F(2h-D)$. Moreover, all the elements in $\vert D\vert$ and $\vert 2h-D\vert$ are projectively normal and
\begin{enumerate}
\item if $\cO_F(D)$ is globally generated, then each element of $\vert D\vert$ and $\vert 2h-D\vert$ is intersection of quadrics and the general one is an irreducible smooth elliptic quartic curve;
\item if $\cO_F(D)$ is not globally generated, then no elements of $\vert D\vert$ and $\vert 2h-D\vert$ are intersection of quadrics. All the elements are the union of a fixed line and a plane cubic meeting the line at exactly one point.
\end{enumerate}
\end{lemma}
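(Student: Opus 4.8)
My plan is to exploit the symmetry between $D$ and $2h-D$. Since $h^2=4$, one computes $(2h-D)^2=4h^2-4(Dh)+D^2=0$ and $(2h-D)h=2h^2-Dh=4$, so $2h-D$ satisfies exactly the same numerical hypotheses as $D$; every assertion proved for $D$ will therefore transfer to $2h-D$ by interchanging the two divisors. Following the discussion preceding the statement, I would first record that $D$ is aCM (Proposition \ref{pWa} and Lemma \ref{laCM}) with Hilbert polynomial $4t$; combined with $h^1\big(\p3,\cI_D(2)\big)=0$ this forces $h^0\big(\p3,\cI_D(2)\big)=2$, so $D$ is contained in the base locus of a pencil of quadrics. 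The proof then divides according to whether $D$ equals that base locus, which I claim is the same as asking whether $\cO_F(2h-D)$ is globally generated.

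In the case of equality, $D$ is the proper complete intersection of the two quadrics spanning the pencil, hence an aCM curve cut out by quadrics; the residual curves $(Q\cap F)-D$ then move without a common component, so $\vert 2h-D\vert$ has no fixed part and $\cO_F(2h-D)$ is globally generated by Proposition \ref{SD}(1). Because a complete intersection is connected one has $h^0\big(D,\cO_D\big)=1$, hence $h^1\big(F,\cO_F(D)\big)=0$, and Proposition \ref{SD}(3) together with Bertini shows the general member of $\vert D\vert$ to be an irreducible smooth curve of degree $4$ and genus $1$, i.e.\ an elliptic quartic. The equivalence established below then gives that $\cO_F(D)$ is globally generated as well, and the same numerical input shows every member of $\vert D\vert$ (not only the general one) is a complete intersection of quadrics.

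In the remaining case $\cO_F(2h-D)$ is not globally generated, and a degree count is the crux: a pencil of quadrics whose base locus strictly contains the degree--$4$ curve $D$ cannot meet in a proper complete intersection, so the quadrics share a fixed plane $H$ and the base locus is $H\cup L$ for a line $L$. Excluding $D\subseteq H$ (which would force $D=F\cap H\sim h$ and $D^2=4$), I would write $D=C+L$ with $C\subseteq H$ a plane cubic; the genus formula on the $K3$ surface $F$ gives $L^2=-2$ and $C^2=0$, and $(C+L)^2=0$ then forces $CL=1$. Writing $H\cap F=C+L'$ for a second line $L'$, the planes through $L'$ cut $F$ residually in the pencil $\vert C\vert$ of plane cubics; since $\vert C\vert+L\subseteq\vert D\vert$ and both systems have dimension $1$ (by $h^0\big(F,\cO_F(D)\big)=2+\tfrac{D^2}2=2$ and Equality \eqref{h^1}), they coincide, so $L$ is a fixed component of $\vert D\vert$ and $\cO_F(D)$ is not globally generated. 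Moreover any quadric through $C$ must contain $H$ (a conic in $H$ cannot contain a plane cubic), so the quadrics through $C+L$ meet in $H\cup L\supsetneq C+L$ and no member is an intersection of quadrics. This proves ``$\cO_F(2h-D)$ not globally generated $\Rightarrow$ $\cO_F(D)$ not globally generated''; the symmetry of the hypotheses yields the reverse implication, and hence the asserted equivalence.

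Finally, the projective normality (aCM) statement for every member of $\vert D\vert$ and $\vert 2h-D\vert$ is immediate, since by Lemma \ref{laCM} being aCM depends only on the divisor class. I expect the genuine obstacle to be the degree--reasons step in the second case: one must argue rigorously that excess base locus of a pencil of quadrics forces a common plane, and then that the residual line is truly a fixed component, which is what the dimension count $\dim\vert C\vert=\dim\vert D\vert=1$ secures; the intersection--number bookkeeping on $F$ is then routine.
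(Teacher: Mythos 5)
Your proposal is correct and follows essentially the same route as the paper, whose proof of Lemma \ref{lQuartic} is the discussion immediately preceding it: aCM--ness and the Hilbert polynomial $4t$ place $D$ in the base locus of a pencil of quadrics, one splits on whether $D$ equals that base locus, the degenerate case forces a fixed plane $H$ and the decomposition $D=C+L$ with $L^2=-2$, $C^2=0$, $CL=1$, the dimension count identifies $\vert D\vert$ with $\vert C\vert+L$, and the symmetry $(2h-D)^2=0$, $(2h-D)h=4$ transfers everything between $D$ and $2h-D$. The extra details you supply --- connectedness of the complete intersection giving $h^1\big(F,\cO_F(D)\big)=0$, the explicit argument that quadrics through $C+L$ meet in $H\cup L$, and the justification of the ``degree reasons'' step --- are precisely the ``very easy details'' the paper leaves to the reader, and they are sound.
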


\section{The exotic bundles}
\label{sExotic}
In this section we will examine briefly  but in details the case $ n=8$ which is particularly interesting. 

In this case $\pf(\Phi)=\det(A)$. In particular $f=\det(A)$ is the determinant of a $4\times4$ matrix with linear entries. The cokernel of the induced morphism $\varphi_0\colon\cO_{\p3}(-1)^{\oplus4}\to\cO_{\p3}^{\oplus4}$ is a line bundle $\cO_F(D)$ and Sequence \eqref{seqDeterminantal} is a minimal free resolution of the aCM line bundle $\cO_F(D)$ with $D^2=4$, $Dh=6$. 

Grothendieck duality yields the exact sequence
\begin{equation}
\label{seqDeterminantalDual}
0\longrightarrow\cO_{\p3}(-2)^{\oplus4}\mapright{-{}^t\varphi_0(-2)}\cO_{\p3}(-1)^{\oplus4}\longrightarrow\cO_F(2h-D)\longrightarrow0.
\end{equation}
Thanks to Corollary 1.8 of \cite{Bea}, we know that also  $\cO_F(2h-D)$ is aCM and the above sequence is its minimal free resolution. Moreover, the resolution above implies $h^0\big(F,\cO_F(2h-D)\big)=0$. 

We have the following commutative diagram
\begin{equation*}
\label{CDExotic}
\begin{CD}
0@>>>\cO_F(-1)^{\oplus4}@>\varphi_0>>   \cO_F^{\oplus4}@>>> \cO_F(D)@>>>0\\
@.@VV\psi V  @V\theta VV \\
0@>>> \cO_F(-2)^{\oplus4}\oplus\cO_F(-1)^{\oplus4}@>\varphi>>   \cO_F^{\oplus4}\oplus\cO_F(-1)^{\oplus4}@>>>\cE@>>>0\\
@.@VV{}^t\theta(-2)V  @V{}^t\psi(-1) VV \\
0@>>> \cO_F(-2)^{\oplus4}@>-{}^t\varphi_0(-2)>>   \cO_F(-1)^{\oplus4}@>>>\cO_F(2h-D)@>>>0\\
\end{CD}
\end{equation*}
where $\varphi$ has matrix $\Phi$ as in Equality \eqref{matrixPhi}, and $\psi$, $\theta$ have respective matrices
$$
\left(\begin{array}{c}
0\\
I
\end{array}
\right),\qquad \left(\begin{array}{c}
I\\
0
\end{array}
\right),
$$
$I$ being the identity of order $4$. Since the two first columns of the above diagram are trivially short exact sequences, it follows the existence of an exact sequence of the form 
\begin{equation}
\label{seqSplitExotic}
0\longrightarrow\cO_F(D)\longrightarrow\cE\longrightarrow\cO_F(2h-D)\longrightarrow0,\qquad Dh=6,\ p_a(D)=3.
\end{equation}
The above discussion proves part of the following statement

\begin{theorem}
\label{t2Exotic}
Let $F\subseteq\p3$ be a smooth quartic surface.

There exists a bundle $\cE$ of rank $2$ on $F$ whose minimal free resolution on $\p3$ is Resolution \eqref{resGeneral} with $n=8$ if and only if the equation of $F$ can be expressed as the determinant of a $4\times4$ matrix with linear entries. 

A rank $2$ bundle $\cE$ on $F$  is as above if and only if $\cE$ fits into Sequence \eqref{seqSplitExotic} where $D$ is an irreducible smooth sextic curve of genus $3$ not lying on a quadric and such that $h^0\big(F,\cO_F(D-h)\big)=0$. 

Indecomposable bundles fitting in Sequence \eqref{seqSplitExotic} form a family parameterized by $\p5$. Decomposable bundles are exactly the direct sums $\cO_F(D)\oplus\cO_F(2h-D)$ where $D$ is a curve as above. 
\end{theorem}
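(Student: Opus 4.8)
The opening discussion already establishes the two forward implications: from a bundle $\cE$ with minimal free resolution \eqref{resGeneral} and $n=8$ it extracts $f=\pf(\Phi)=\det(A)$ and, via the commutative diagram, the exact sequence \eqref{seqSplitExotic}. What remains is to prove the two reverse implications and then to analyse the extensions \eqref{seqSplitExotic}. To pin down the sextic $D$ occurring in the second assertion, I would set $\cO_F(D)=\coker(\varphi_0)$, where $\varphi_0$ is the morphism with matrix $A$, and read off from \eqref{seqDeterminantal} and \eqref{seqDeterminantalDual} that $D^2=4$, $Dh=6$ and $h^0\big(F,\cO_F(D-h)\big)=h^0\big(F,\cO_F(2h-D)\big)=0$. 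The second vanishing says exactly that $D$ lies on no quadric, so Lemma \ref{lSextic} applies and I may replace $D$ by a general member of $\vert D\vert$, which is an irreducible, smooth, projectively normal sextic of genus $3$; since \eqref{seqSplitExotic} depends only on the class of $D$, this change is harmless.

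For the reverse implication of the second assertion I would argue constructively. Given $\cE$ fitting into \eqref{seqSplitExotic} with $D$ such a sextic, Lemma \ref{lSextic} and Grothendieck duality provide the resolutions \eqref{seqDeterminantal} and \eqref{seqDeterminantalDual} of $\cO_F(D)$ and $\cO_F(2h-D)$; applying the horseshoe lemma to \eqref{seqSplitExotic} then produces a free resolution of $\cE$ over $\p3$ of exactly the shape of \eqref{resGeneral} with $n=8$, whose off--diagonal connecting block is a $4\times4$ skew block $B$ of quadrics as in \eqref{matrixPhi}. Every entry of the resulting matrix is a form of degree $1$ or $2$, so the resolution has no constant entries and is therefore minimal; being minimal, it is the minimal free resolution, and Proposition \ref{pResolution} shows $\cE$ is an initialized aCM rank $2$ bundle with $c_1(\cE)=\cO_F(2h)$, $c_2(\cE)=8$ whose resolution is \eqref{resGeneral} with $n=8$. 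The reverse implication of the first assertion is then immediate: when $f=\det(A)$ one builds $\cO_F(D)$ and $\cO_F(2h-D)$ as above and takes, say, their direct sum, whose block--diagonal resolution is \eqref{resGeneral} with $n=8$.

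For the last assertion I would classify the extensions \eqref{seqSplitExotic} through $\ext^1_F\big(\cO_F(2h-D),\cO_F(D)\big)\cong H^1\big(F,\cO_F(2D-2h)\big)$. Riemann--Roch on the $K3$ surface $F$ gives $\chi\big(\cO_F(2D-2h)\big)=2+\tfrac12(2D-2h)^2=-6$; Serre duality gives $h^2=h^0\big(F,\cO_F(2h-2D)\big)=0$, since $(2h-2D)h=-4<0$; and $h^0\big(F,\cO_F(2D-2h)\big)=0$, since an effective divisor in $\vert 2D-2h\vert$ would meet the irreducible curve $D$ (which has $D^2=4\ge0$) non--negatively, contradicting $D\cdot(2D-2h)=2D^2-2Dh=-4$. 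Hence $\dim\ext^1=6$. Because $\operatorname{Hom}\big(\cO_F(2h-D),\cO_F(D)\big)=H^0\big(F,\cO_F(2D-2h)\big)=0$ and symmetrically $\operatorname{Hom}\big(\cO_F(D),\cO_F(2h-D)\big)=0$, the automorphisms of the two line bundles act on the extension space only by scalars, so proportional classes give isomorphic bundles; the zero class yields the decomposable bundle $\cO_F(D)\oplus\cO_F(2h-D)$, and the remaining classes are parameterized by $\p5=\p(\ext^1)$.

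The crux, which I expect to be the main obstacle, is to identify these non--split extensions with the indecomposable bundles and to confirm that $\p5$ parameterizes them without redundancy. For this I would show that $\cO_F(D)$ is the unique sub--line--bundle of maximal $h$--degree in any $\cE$ as in \eqref{seqSplitExotic}. Indeed, for a sub--line--bundle $L\hookrightarrow\cE$ the composite $L\to\cO_F(2h-D)$, if non--zero, forces $Lh\le(2h-D)h=2$, while if it is zero then $L$ factors through $\cO_F(D)$, giving $Lh\le Dh=6$ with equality only for $L=\cO_F(D)$. Consequently, since $\cO_F(D)$ embeds in a putative splitting $\cE\cong M_1\oplus M_2$, one summand has $h$--degree at least $6$, hence exactly $6$, hence equals $\cO_F(D)$, forcing $M_2=\cO_F(2h-D)$ and a splitting of \eqref{seqSplitExotic}; thus every non--split extension is indecomposable. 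The same uniqueness shows that any isomorphism between two such bundles must respect the sub $\cO_F(D)$ and therefore comes from a scalar, so distinct points of $\p5$ give non--isomorphic bundles and the decomposable bundles are exactly the direct sums $\cO_F(D)\oplus\cO_F(2h-D)$.
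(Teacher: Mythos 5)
Your proposal is correct, and at the decisive step it takes a genuinely different route from the paper. The paper proves the reverse implication of the second equivalence cohomologically: it uses Proposition \ref{pWa} (applied to $D$ and to $3h-D$) to show that any $\cE$ fitting into Sequence \eqref{seqSplitExotic} is initialized and aCM with $c_1(\cE)=2h$, $c_2(\cE)=8$, observes that $\cE$ is not globally generated because $h^0\big(F,\cO_F(2h-D)\big)=0$, so that $n\in\{6,8\}$, and then excludes $n=6$ by a delicate lifting argument (Diagram \eqref{CDAbsurd}) culminating in the rank contradiction $A=\overline{A}C$. You instead splice the Beauville resolutions \eqref{seqDeterminantal} and \eqref{seqDeterminantalDual} together via the horseshoe lemma and note that the resulting differential $\left(\begin{smallmatrix} A & \gamma\\ 0 & -{}^tA\end{smallmatrix}\right)$ has only entries of degree $1$ or $2$, hence is minimal, hence \emph{is} the minimal free resolution by uniqueness; this disposes of the $n=6$ possibility with no diagram chase at all. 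Two small points to make explicit for this to be airtight: first, the horseshoe must be run on the graded modules $H^0_*$, and the required surjectivity of $H^0_*\big(F,\cE\big)\to H^0_*\big(F,\cO_F(2h-D)\big)$ holds precisely because $\cO_F(D)$ is aCM (Proposition \ref{pWa}, case (d)); second, your appeal to Proposition \ref{pResolution} to conclude that $\cE$ is initialized and aCM is slightly off, since its \lq if\rq\ direction presupposes a skew--symmetric $\varphi$, which the horseshoe matrix need not be --- deduce these properties directly from the cohomology of Sequence \eqref{seqSplitExotic} as the paper does, after which skew--symmetrizability of the minimal resolution follows from Theorem B of \cite{Bea}. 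Elsewhere your arguments are equivalent but cleaner: the unique-maximal-subbundle degree argument replaces the paper's four-case analysis via Proposition \ref{pWa} for decomposability, and it additionally shows that distinct points of $\p5$ give non-isomorphic bundles, a non-redundancy the paper leaves implicit; likewise your proof of $h^0\big(F,\cO_F(2D-2h)\big)=0$ by intersecting with the irreducible curve $D$ (using $D^2=4>0$) replaces the paper's split-off-a-copy-of-$D$ argument. What the paper's longer route buys is the non-global-generation observation and the $n=6$ exclusion technique, both of which are reused later, notably in the proof of Theorem \ref{t2ngg}.
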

\begin{proof}
We proved above that the existence of Resolution \eqref{resGeneral} with $n=8$ implies the following facts:
\begin{itemize}
\item the equation of $F$ can be expressed as the determinant of a $4\times4$ matrix with linear entries;
\item a divisor $D$ as above exists on $F$;
\item Sequence \eqref{seqSplitExotic} exists.
\end{itemize}

Assume that the equation of $F$ can be expressed as the determinant of a $4\times4$ matrix with linear entries. Then we can construct Sequences \eqref{seqDeterminantal} and \eqref{seqDeterminantalDual}, defining line bundles $\cO_F(D)$ and $\cO_F(2h-D)$. Thus $\cO_F(D)\oplus\cO_F(2h-D)$ is a bundle of rank $2$ whose minimal free resolution on $\p3$ is Resolution \eqref{resGeneral} with $n=8$. This completes the proof of the first equivalence in the statement.

Assume the existence of $D$ on $F$ as in the statement and let $\cE$ any sheaf defined by Sequence \eqref{seqSplitExotic}. 
We have $D^2=4$, $hD=6$, $h^0\big(F,\cO_F(D-h)\big)=h^0\big(F,\cO_F(2h-D)\big)=0$ by hypothesis, thus Proposition \ref{pWa} implies that $\cO_F(D)$ is aCM and initialized. 

Moreover it is easy to check that $(3h-D)^2=4$, $(3h-D)h=6$: the two above vanishings give analogous vanishings with the divisor $2h-D$ instead of $D$. Thus $\cO_F(3h-D)$ is aCM and initialized as well, again due to Proposition \ref{pWa}, whence $\cO_F(2h-D)$ is aCM. It is an immediate consequence of these remarks that $\cE$ is an initialized, aCM bundle of rank $2$. Moreover $c_1(\cE)=2h$ and $c_2(\cE)=8$.

We know that $\cO_F(D)$ is aCM, thus we can write the following commutative diagram
\begin{equation*}
\label{CDGG}
\begin{CD}
H^0\big(F,\cE\big)\otimes\cO_F@>>>H^0\big(F,\cO_F(2h-D)\big)\otimes\cO_F@>>>0\\
@VVV @VVV\\
\cE@>>>\cO_F(2h-D)@>>>0.\\
\end{CD}
\end{equation*}
Since $\cO_F(2h-D)$ is not globally generated (because $h^0\big(F,\cO_F(2h-D)\big)=0$), the same is true for $\cE$. In particular $\cE$ fits in Resolution \eqref{resGeneral} with $n$ either $6$ or $8$. 

Assume that the first case holds. 
We can construct a commutative diagram
\begin{equation}
\label{CDAbsurd}
\begin{CD}
0@>>>\cO_{\p3}(-1)^{\oplus4}@>\varphi_0>>   \cO_{\p3}^{\oplus4}@>>> \cO_F(D)@>>>0\\
@.@. @. @VVV \\
0@>>> \cO_{\p3}(-2)^{\oplus4}\oplus\cO_{\p3}(-1)^{\oplus2}@>\overline{\varphi}>>   \cO_{\p3}^{\oplus4}\oplus\cO_{\p3}(-1)^{\oplus2}@>>>\cE@>>>0
\end{CD}
\end{equation}
where the matrices of $\varphi_0$ and $\overline{\varphi}$ are the already defined $A$ and
$$
\left(\begin{array}{cc}
\overline{B}&\overline{A}\\
-{}^t\overline{A}&0
\end{array}
\right).
$$
Here $\overline{B}$ and $\overline{A}$ are respectively a $4\times4$ matrix with quadratic entries and  a $4\times2$ matrix with linear entries. Let $S:=k[x_0,x_1,x_2,x_3]$: taking global sections we have an induced diagram
$$
\begin{CD}
S^{\oplus4}@>>> H^0_*\big(F,\cO_F(D)\big)\\
@. @| \\
S^{\oplus4}\oplus S(-1)^{\oplus2}@>>>H^0_*\big(F,\cE\big)@>>>0
\end{CD}
$$
where the bottom row is exact because it comes from a minimal free resolution of $\cE$. Thus there is a map 
$\Theta\colon S^{\oplus4}\to S^{\oplus4}\oplus S(-1)^{\oplus2}$ making the above diagram commutative. By construction, in degree $0$ the maps in the above diagram are isomorphisms. Thus the degree $0$ component of $\Theta$ must have maximal rank. Up to a proper choice of the bases, we can assume that the matrix of $\Theta$ is
$$
\left(\begin{array}{c}
I\\
0
\end{array}
\right),
$$
where $I$ is the identity of order $4$. 

By sheafifying we thus obtain a morphism $\theta\colon \cO_{\p3}^{\oplus4}\to \cO_{\p3}^{\oplus4}\oplus\cO_{\p3}(-1)^{\oplus2}$ making Diagram \eqref{CDAbsurd} commutative. It follows the existence of another morphism $\eta\colon\cO_F(-1)^{\oplus4}\to\cO_F(-2)^{\oplus4}\oplus\cO_F(-1)^{\oplus2}$ completing Diagram \eqref{CDAbsurd} as a commutative diagram. By degree reasons its matrix is
$$
\left(\begin{array}{c}
0\\
C
\end{array}
\right)
$$
where $C$ is a $2\times4$ matrix of constants. The commutativity of the last square of the diagram would imply $A=\overline{A}C$ which is a contradiction, because at each point of ${\p3}$ the rank of the matrix on the right is at most $2$. We conclude that $\cE$ fits into Resolution \eqref{resGeneral} with $n=8$. Thus also the proof of the second equivalence in the statement is complete.

Assume that $\cE$ is decomposable: then $\cE\cong \cO_F(A)\oplus\cO_F(2h-A)$ (because $c_1(\cE)=2h$), where $\cO_F(A)$ and $\cO_F(2h-A)$ are aCM (because summands of an aCM bundle), $\cO_F(A)$ is initialized and either $\cO_F(2h-A)$ is also initialized, or it has no sections (because summands of an initialized bundle). 

Proposition \ref{pWa} and equality $(2h-A)A=c_2(\cE)=8$  imply that either $A^2=-2$, $hA=3$, $(2h-A)^2=2$, $h(2h-A)=5$, or $A^2=(2h-A)^2=0$ and $hA=h(2h-A)=4$, or $A^2=2$, $hA=5$, $(2h-A)^2=-2$, $h(2h-A)=3$, or finally $A^2=4$, $hA=6$, $(2h-A)^2=-4$, $h(2h-A)=2$. In the first three cases $(A-D)h$ and $(2h-A-D)h$ are both negative, thus $h^0\big(F,\cO_F(A-D)\big)=h^0\big(F,\cO_F(2h-A-D)\big)$. In particular each morphism $\cO_F(D)\to\cO_F(A)\oplus\cO_F(2h-A)$ must be zero.

Thus only the last case is possible. Since $(2h-A-D)h=-4$ we deduce that an injective morphism $\cO_F(D)\to\cO_F(A)\oplus\cO_F(2h-A)$ induces an injective morphism $\cO_F(D)\to\cO_F(A)$, i.e. a non--zero section in  $H^0\big(F,\cO_F(D-A)\big)$. Thus $\cO_F(D)\cong\cO_F(A)$ because $(D-A)h=0$, hence Sequence \eqref{seqSplitExotic} splits. We conclude that there are extensions as above with an indecomposable sheaf $\cE$ in the middle.

Finally, indecomposable bundles fitting in Sequence \eqref{seqSplitExotic} are parameterized by a projective space of dimension $h^1\big(F,\cO_F(2D-2h)\big)-1$. We have $h^2\big(F,\cO_F(2D-2h)\big)=h^0\big(F,\cO_F(2h-2D)\big)=0$, because $(2h-2D)h=-4$. We have $(2D-2h)D=-4$ which implies that each effective divisor splits as the sum of $D$ and an effective divisor in $\vert D-2h\vert$. The inequality $(D-2h)h=-2$ thus gives a contradiction. We conclude that $h^0\big(F,\cO_F(2D-2h)\big)=0$, too. 

Equality \eqref{RRgeneral} gives $\chi(\cO_F(2D-2h))=2+2(D-h)^2=-6$, thus $h^1\big(F,\cO_F(2D-2h)\big)=6$. In particular, the non--trivial Sequences \eqref{seqSplitExotic} are parameterized by $\p5$.
\end{proof}

\begin{remark}
\label{0Exotic}
The cohomology of Sequence \eqref{seqSplitExotic} gives $h^0\big(F,\cE(-D)\big)=1$ and $h^0\big(F,\cE\big)=h^0\big(F,\cO_F(D)\big)$. Hence the natural multiplication morphism 
$$
H^0\big(F,\cE(-D)\big)\otimes H^0\big(F,\cO_F(D)\big)\longrightarrow H^0\big(F,\cE\big)
$$
is an isomorphism. We deduce that the zero--locus of each non--zero section of $\cE$ is a divisor in $\vert D\vert$ and conversely.
\end{remark}

\begin{remark}
\label{rTwiceExotic}
Let $\cE$ fit into Resolution \eqref{resGeneral} with $n=8$.

Then $\cE(h)$ is globally generated, thus the same is true for $\cO_F(3h-D)$. We conclude that the general element in $\vert 3h-D\vert$ is a smooth irreducible curve. As pointed out in the proof above  $(3h-D)^2=4$, $(3h-D)h=6$, thus the genus of the general element in $\vert 3h-D\vert$ is $3$. Moreover
\begin{gather*}
h^0\big(F,\cO_F(2h-(3h-D))\big)=h^0\big(F,\cO_F(D-h)\big)=0,\\
h^0\big(F,\cO_F((3h-D)-h)\big)=h^0\big(F,\cO_F(2h-D)\big)=0,
\end{gather*}
thus Proposition \ref{pWa}, (d) implies that $\cO_F(3h-D)$ is aCM. Finally $D(3h-D)=14\ne D^2$, hence $\vert D\vert\ne\vert 3h-D\vert$. 

Replacing $D$ with $3h-D$ in the construction above, we deduce the existence of a second family of indecomposable, initialized aCM bundles $\overline{\cE}$ on $F$ with $c_1(\overline{\cE})=2h$ and $c_2(\overline{\cE})=8$ fitting in a different Resolution \eqref{resGeneral} again with $n=8$.

The two families are disjoint because the bundles $\cE$ and $\overline{\cE}$ have sections vanishing on divisors lying in disjoint linear systems.
\end{remark}

\section{Construction of bundles}
\label{sConstruction}
Let $\cE$ be a rank $2$ vector bundle on $F$ and $s\in H^0\big(F,\cE\big)$. If its zero--locus $E:=(s)_0\subseteq F$ has pure codimension $2$, the corresponding Koszul complex  gives the following exact sequence 
\begin{equation}
  \label{seqIdeal}
  0\longrightarrow \cO_F\longrightarrow\cE\longrightarrow \cI_{E\vert F}(c_1(\cE))\longrightarrow 0.
\end{equation}
The degree of $E$ is $c_2(\cE)$. We now recall how to revert the  above construction. To this purpose we give the following well--known definition.

\begin{definition}
Let $F$ be a smooth projective irreducible surface and let $\cG$ be a coherent sheaf on $F$.

We say that a locally complete intersection subscheme $E\subseteq F$ of dimension zero is Cayley--Bacharach (CB for short) with respect to $\cG$ if, for each $E'\subseteq E$ of degree $\deg(E)-1$, the natural morphism $H^0\big(F,\cI_{E\vert F}\otimes\cG\big)\to H^0\big(F,\cI_{E'\vert F}\otimes\cG\big)$ is an isomorphism.
\end{definition}

For the following result see Theorem 5.1.1 in \cite{H--L}.

\begin{theorem}
  \label{tCB}
 Let $F$ be an integral smooth projective irreducible surface, $E\subseteq F$ a locally complete intersection subscheme of dimension $0$. 
 
Then there exists a vector bundle $\cE$ of rank $2$ on $F$ with $\det(\cE)={\mathcal L}$ and having a section $s$ such that $E=(s)_0$ if and only if $E$ is CB with respect to $\omega_F\otimes{\mathcal L}$. 

If such an $\cE$ exists, then it is unique up to isomorphism if $h^1\big(F,{\mathcal L}^\vee\big)= 0$.
\end{theorem}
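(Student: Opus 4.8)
The plan is to replace the citation to \cite{H--L} by a self-contained derivation of the Serre correspondence. Set $\cG:=\omega_F\otimes{\mathcal L}$. An extension as in Sequence \eqref{seqIdeal}, namely $0\to\cO_F\to\cE\to\cI_{E\vert F}\otimes{\mathcal L}\to0$, is classified by a class in $\ext^1_F\big(\cI_{E\vert F}\otimes{\mathcal L},\cO_F\big)$, and conversely every rank $2$ bundle carrying a section $s$ with $(s)_0=E$ of pure codimension $2$ arises this way. First I would feed this global $\ext^1$ into the local-to-global spectral sequence. Since $E$ has codimension $2$ one has $\Hom(\cI_{E\vert F},\cO_F)\cong\cO_F$, and since $E$ is a local complete intersection of dimension $0$ the Koszul resolution gives $\Ext^1(\cI_{E\vert F},\cO_F)\cong\Ext^2(\cO_E,\cO_F)\cong\omega_E\otimes\omega_F^\vee\vert_E\cong\omega_F^\vee\vert_E$, using $\omega_E\cong\cO_E$ for a $0$-dimensional Gorenstein scheme. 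Twisting by ${\mathcal L}^\vee$ this produces the four-term exact sequence
\[
0\to H^1(F,{\mathcal L}^\vee)\to\ext^1_F\big(\cI_{E\vert F}\otimes{\mathcal L},\cO_F\big)\mapright{\alpha}H^0\big(E,\cG^\vee\vert_E\big)\to H^2(F,{\mathcal L}^\vee).
\]

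The next step is the local criterion for local freeness: the middle term $\cE$ of the extension with class $e$ is locally free precisely when $\alpha(e)\in H^0\big(E,\cG^\vee\vert_E\big)$ is a local generator of the invertible sheaf $\cG^\vee\vert_E$ at every point of $E$, i.e. a nowhere-vanishing section. Hence the sought bundle exists iff there is a nowhere-vanishing $\xi\in H^0\big(E,\cG^\vee\vert_E\big)$ lying in $\im(\alpha)$. To describe $\im(\alpha)$ I would dualize the four-term sequence by Serre duality on $F$ (identifying $H^2(F,{\mathcal L}^\vee)\cong H^0(F,\cG)^\vee$) together with the trace pairing on the Gorenstein scheme $E$. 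This exhibits the last map of the sequence as the transpose of the restriction $\rho\colon H^0(F,\cG)\to H^0\big(E,\cG\vert_E\big)$, so that $\xi\in\im(\alpha)$ if and only if $\xi$ is orthogonal, under the perfect trace pairing $H^0\big(E,\cG^\vee\vert_E\big)\times H^0\big(E,\cG\vert_E\big)\to k$, to $\im(\rho)$; equivalently, the dual sequence is just the long exact cohomology sequence of $0\to\cI_{E\vert F}\otimes\cG\to\cG\to\cG\vert_E\to0$.

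The heart of the argument, which I expect to be the main obstacle, is to match this orthogonality condition with the Cayley--Bacharach property through local Gorenstein duality, working one point of $E$ at a time. At $p\in E$ the local ring is Gorenstein with one-dimensional socle $\Soc$, and a section $t\in H^0(F,\cG)$ vanishing on a subscheme $E'\subset E$ of degree $\deg(E)-1$ but not on $E$ is exactly one for which $\rho(t)$ is concentrated at a single $p$ with $\rho(t)_p$ spanning the socle. For any nowhere-vanishing $\xi$, multiplication by the unit $\xi_p$ preserves the socle and the trace is nonzero on the socle, whence $\langle\xi,\rho(t)\rangle\ne0$; thus if $E$ fails to be CB with respect to $\cG$ no admissible $\xi$ can exist. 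Conversely, if $E$ is CB, then $\xi$ fails to generate at $p$ exactly when it lies in the kernel of the evaluation functional $\lambda_p\colon H^0\big(E,\cG^\vee\vert_E\big)\to\big(\cG^\vee\vert_E\big)_p\otimes k(p)$, and the CB condition says precisely that $(\im\rho)^\perp\not\subseteq\ker\lambda_p$ for every $p$. Since $k$ is infinite, a single $\xi\in(\im\rho)^\perp$ avoiding the finitely many hyperplanes $\ker\lambda_p$ exists, producing a nowhere-vanishing $\xi\in\im(\alpha)$ and hence the bundle $\cE$.

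Finally, for uniqueness I would use that when $h^1\big(F,{\mathcal L}^\vee\big)=0$ the first term of the four-term sequence vanishes, so $\alpha$ is injective and the fibre of $\alpha$ over a fixed nowhere-vanishing $\xi$ is a single point; the extension, and therefore $\cE$, is then determined by its local invariant $\xi$. A standard comparison of two such extensions, using that both local invariants are nowhere-vanishing sections of the same invertible sheaf $\cG^\vee\vert_E$ and hence differ by a global unit on $E$, yields an isomorphism of the two middle terms, giving uniqueness up to isomorphism.
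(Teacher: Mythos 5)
The paper does not actually prove this theorem: its ``proof'' is the citation of Theorem 5.1.1 of \cite{H--L}, so the benchmark is that textbook argument, and your \emph{existence} proof is a correct reconstruction of it. The four-term sequence from the local-to-global $\Ext$ spectral sequence (using $\Hom(\cI_{E\vert F},\cO_F)\cong\cO_F$ in codimension $2$, and $\Ext^1(\cI_{E\vert F},\cO_F)\cong\omega_F^\vee\vert_E$ from the Koszul resolution together with $\omega_E\cong\cO_E$ for the Artinian Gorenstein scheme $E$), the Serre-duality identification of $\im(\alpha)$ with $(\im\rho)^\perp$, and the pointwise Gorenstein analysis are all sound. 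In particular your dictionary is right: the colength-one subschemes of $E$ are exactly those cut out by the socle of $\cO_{E,p}$ at a single point $p$; under the trace pairing the functional $\lambda_p$ corresponds, up to a nonzero scalar, to the socle generator $\sigma_p$ extended by zero (since $u\sigma_p=u(p)\sigma_p$ for any unit $u$, and a dualizing functional does not kill the socle); hence failure of CB at $p$ is equivalent to $\sigma_p\in\im\rho$, i.e.\ to $(\im\rho)^\perp\subseteq\ker\lambda_p$, and over the infinite field $k$ one avoids the finitely many proper subspaces $(\im\rho)^\perp\cap\ker\lambda_p$.

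The \emph{uniqueness} part, however, has a genuine gap at its final step. When $h^1\big(F,{\mathcal L}^\vee\big)=0$ the class $e$ is indeed determined by $\xi=\alpha(e)$, and two admissible invariants satisfy $\xi_2=u\xi_1$ with $u\in H^0\big(E,\cO_E\big)^\times$; but nothing makes $u$ act on $\ext^1_F\big(\cI_{E\vert F}\otimes{\mathcal L},\cO_F\big)$. The only automorphisms available are those of the two end sheaves of the extension, and on the integral projective surface $F$ these contribute only $k^\times$ (indeed $\mathrm{Hom}_F\big(\cI_{E\vert F}\otimes{\mathcal L},\cI_{E\vert F}\otimes{\mathcal L}\big)=H^0\big(F,\cO_F\big)=k$); moreover, although the identification $\Ext^1\cong\cG^\vee\vert_E$ depends on a trivialization of $\omega_E$, both $\xi_i$ transform by the same unit under a change of trivialization, so the ratio $u$ is intrinsic. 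Hence for a non-constant unit $u$ the classes $e_1,e_2$ are genuinely non-proportional extension classes, and ``the two middle terms are isomorphic'' is precisely the assertion to be proved, not a formal consequence of differing by a unit. The situation really occurs: for $F=\p2$, ${\mathcal L}=\cO_{\p2}(4)$ and $E$ eight general points, $E$ is vacuously CB with respect to $\cO_{\p2}(1)$, $h^1\big(\p2,\cO_{\p2}(-4)\big)=0$, and $(\im\rho)^\perp\cong\ext^1$ is $5$-dimensional, so there is a positive-dimensional family of pairwise inequivalent extensions, all with nowhere-vanishing invariant. Note also that any isomorphism $\cE_1\to\cE_2$ compatible with the two surjections onto $\cI_{E\vert F}\otimes{\mathcal L}$ would force $\xi_2\in k^\times\xi_1$ (by exactness of $\mathrm{Hom}(\cO_F,\cO_F)\to\ext^1(\cI_{E\vert F}\otimes{\mathcal L},\cO_F)\to\ext^1(\cE_1,\cO_F)$), so the desired isomorphism must mix the sections of $\cE_2$; closing the gap requires an honest comparison of the two bundles --- e.g.\ lifting a section of $\cE_2$ vanishing along $E$ to a morphism $\cE_1\to\cE_2$, killing the obstruction by means of $h^1\big(F,{\mathcal L}^\vee\big)=0$, and excluding degenerate morphisms via the fact that the split sheaf $\cO_F\oplus\big(\cI_{E\vert F}\otimes{\mathcal L}\big)$ is not locally free --- which is exactly the part of the cited Theorem 5.1.1 your sketch does not reproduce.
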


In the remaing part of this section, making use of the above theorem, we will investigate which are the conditions that a zero--dimensional scheme $E\subseteq F$ must satisfy in order to be the zero--locus of an initialized aCM bundle $\cE$ of rank $2$ on $F$ with $c_1(\cE)=\cO_F(2h)$. 

Let $E\subseteq F$ be a zero--dimensional subscheme. Its Hilbert polynomial is $\deg(E)$ and let $I_E\subseteq S$ be its homogeneous ideal. 

For each general linear form $\ell\in S_E:=S/I_E$, the ring $R:= S_E/\ell S_E$ is local, graded and Artin: $R$ is called {\sl the Artin reduction of $S_E$}\/ and it is naturally graded. The maximum integer $\sigma>0$ such that the component $R_\sigma$ of degree $\sigma$ is non--zero is called {\sl the socle degree of $R$}\/: it depends only on $E$ and not on $\ell$. We say that $R$ is {\sl Gorenstein} if $R_\sigma$ has dimension $1$ as vector space over $k$.

We say that $E$ is {\sl arithmetically Gorenstein} ({\sl aG} for short) if its Artin reduction is Gorenstein: the aG property does not depend on $\ell$ as well.

For the following helpful result see \cite{Kr}, Theorem 1.1.

\begin{theorem}
\label{tDGO}
Let $E\subseteq\p n$ be a zero--dimensional scheme and let $\sigma$ be the socle degree of the Artin reduction of its homogeneous coordinate ring. 

Then $E$ is aG if and only if the two following conditions hold.
\begin{enumerate}
\item $\dim_k((S_E)_t)+\dim_k((S_E)_{\sigma-1-t})=\deg(E)$, for each $t\in \bZ$.
\item $E$ is CB with respect to $\cO_{\p n}(\sigma-1)$.
\end{enumerate}
\end{theorem}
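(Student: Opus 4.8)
The plan is to translate everything into the structure of the Artin reduction $R=S_E/\ell S_E$ and its socle. Since $E$ is zero--dimensional with saturated ideal, $A:=S_E$ is a one--dimensional graded Cohen--Macaulay ring; a general linear form $\ell$ is then a non--zerodivisor and $R$ is a standard graded Artinian algebra with $R_0=k$, top non--zero degree $\sigma$, and $\dim_k R=\deg(E)$. By definition $E$ is aG if and only if $R$ is Gorenstein, and I would use the characterization that $R$ is Gorenstein precisely when $\Soc(R)$ is one--dimensional. As $R_\sigma\subseteq\Soc(R)$ always, this splits into two requirements: that $\dim_k R_\sigma=1$, and that $\Soc(R)_j=0$ for every $j<\sigma$. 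The whole proof amounts to identifying the first requirement with condition (1) and the second with condition (2).

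First I would treat condition (1). Because $\ell$ is a degree--one non--zerodivisor, the Hilbert function of $R$ is the first difference of that of $A$, so $\dim_k(R_t)=\dim_k((S_E)_t)-\dim_k((S_E)_{t-1})$; telescoping gives $\dim_k((S_E)_t)=\sum_{i\le t}\dim_k(R_i)$ and $\sum_i\dim_k(R_i)=\deg(E)$. A short manipulation of these identities shows that condition (1) holds if and only if the Hilbert function of $R$ is symmetric, i.e. $\dim_k(R_t)=\dim_k(R_{\sigma-t})$ for all $t$. In particular condition (1) forces $\dim_k(R_\sigma)=\dim_k(R_0)=1$, securing the first requirement of the first paragraph.

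The core of the argument is to show that condition (2) is equivalent to $\Soc(R)$ being concentrated in degree $\sigma$. I would argue through cohomology and duality. For a colength--one subscheme $E'\subseteq E$ we have $0\to\cI_{E}\to\cI_{E'}\to k(P)\to0$ with $P$ a point; twisting by $\cO_{\p n}(\sigma-1)$ and taking cohomology shows that $H^0\big(\p n,\cI_{E}(\sigma-1)\big)\to H^0\big(\p n,\cI_{E'}(\sigma-1)\big)$ is an isomorphism exactly when the connecting map $k\to H^1\big(\p n,\cI_{E}(\sigma-1)\big)$ is injective, so condition (2) says that all these connecting maps are non--zero. Next I would use the identifications $H^1\big(\p n,\cI_E(t)\big)\cong H^1_{\mathfrak m}(A)_t$, the graded local duality $H^1_{\mathfrak m}(A)^\vee\cong\omega_A$, and the presentation $R\cong\ker\big(\ell\colon H^1_{\mathfrak m}(A)(-1)\to H^1_{\mathfrak m}(A)\big)$ arising from $0\to A(-1)\to A\to R\to0$. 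Dualizing turns $\Soc(R)$ into the minimal generators of the canonical module $\omega_R=\omega_A/\ell\omega_A$, with a socle element in degree $j$ corresponding to a generator in the dual degree. The remaining point is to verify that the connecting maps above see precisely the generators of $\omega_R$ lying below the top socle degree, so that their simultaneous non--vanishing is equivalent to $\Soc(R)_j=0$ for all $j<\sigma$.

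Granting this dictionary the theorem assembles quickly. If $E$ is aG then $R$ is Gorenstein, hence its Hilbert function is symmetric, giving (1), and $\Soc(R)=R_\sigma$ is concentrated in top degree, giving (2). Conversely, (1) yields $\dim_k(R_\sigma)=1$ while (2) yields $\Soc(R)=R_\sigma$, so $\dim_k\Soc(R)=1$ and $R$ is Gorenstein, i.e. $E$ is aG. I expect the main obstacle to be the dictionary of the third paragraph: making precise the correspondence between colength--one subschemes $E'$ of $E$ and graded socle elements of $R$, and in particular checking that the correct twist is exactly $\sigma-1$ and that letting $E'$ range over all colength--one subschemes detects every socle degree below $\sigma$. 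This degree bookkeeping, routed through local duality and the non--zerodivisor $\ell$, is the genuinely delicate step; the numerical reductions of the first two paragraphs are straightforward.
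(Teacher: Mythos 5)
Your architecture is the right one, and it is worth noting that the paper itself does not prove this statement at all: its ``proof'' is the citation to \cite{Kr}, Theorem 1.1, so you are attempting strictly more than the authors do. Your numerical paragraph is correct and essentially complete: since $\ell$ is a non--zerodivisor on the one--dimensional Cohen--Macaulay ring $S_E$, condition (1) is indeed equivalent to symmetry of the Hilbert function of $R$, hence in particular to $\dim_k R_\sigma=1$; and your identifications $H^1\big(\p n,\cI_E(t)\big)\cong H^1_{\mathfrak m}(S_E)_t$, $R\cong\ker\big(\ell\colon H^1_{\mathfrak m}(S_E)(-1)\to H^1_{\mathfrak m}(S_E)\big)$, local duality, and the socle/generator dictionary for $\omega_R$ are all correct. (You also correctly read ``Gorenstein'' as $\dim_k\Soc(R)=1$; the paper's literal definition, $\dim_k R_\sigma=1$, would make the theorem false, as four points with three collinear already show.) The genuine gap is that the equivalence you correctly isolate as the core --- condition (2) if and only if $\Soc(R)$ is concentrated in degree $\sigma$ --- is never established: you explicitly defer ``the remaining point \dots to verify,'' and that remaining point is not degree bookkeeping but the entire mathematical content of the Davis--Geramita--Orecchia/Kreuzer theorem. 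A proof whose decisive implication is flagged as ``to be verified'' is a plan, not a proof.

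To make the difficulty concrete: one direction is genuinely soft. If $\omega_{S_E}$ is generated in its initial degree $1-\sigma$, then a functional separating the socle vector $v\in\Soc(\cO_{E,P})$ attached to a colength--one $E'\subseteq E$, found in some very negative degree where the graded pieces of $\omega_{S_E}$ are all of $\mathrm{Hom}_k\big(H^0(\cO_E),k\big)$, can be pushed down to degree $1-\sigma$, because the module generators act on $v$ through scalars ($\mathfrak m_P v=0$); this yields CB. The converse is where the theorem lives. A socle class of $R$ in degree $j<\sigma$ corresponds to $b\in H^0\big(\cO_E\big)$ with $b\notin\im\big(S_{j-1}\to H^0(\cO_E)\big)$ but $S_1\cdot b\subseteq\im(S_j)$, and to contradict CB one must produce from $b$ a \emph{point--socle} vector lying in the image of $S_{\sigma-1}$. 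A multiplier $f\in H^0\big(\cO_E\big)$ with $fb\in\Soc(\cO_{E,P})\setminus\{0\}$ always exists, but a priori $f$ is only realizable by forms of degree $\ge\sigma$, where $S_u\to H^0(\cO_E)$ is surjective --- and in those degrees the CB hypothesis carries no information, since $\im(S_u)$ is everything; what is needed is a realization of $f$ in degree exactly $\sigma-1-j$, and nothing in your sketch supplies it. The equivalence is in fact true unconditionally (it is precisely Kreuzer's theorem that CB in degree $\sigma-1$ amounts to $\omega_{S_E}$ being generated in degree $1-\sigma$), so your route is completable and faithful to the argument in the literature, but as submitted the key implication (2) $\Rightarrow$ ``no socle below $\sigma$'' is missing, and this is exactly the step that separates the theorem from its routine numerical shell.
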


Notice that if $E$ is properly contained in an aCM scheme $F\subseteq \p n$, then  the twisted cohomology of Sequence \eqref{seqInclusion}
allows us to replace $\cO_{\p n}(\sigma-1)$ with $\cO_F((\sigma-1)h)$ in Theorem \ref{tDGO} above. Thus if $E$ is aG, then it is also CB with respect to $\cO_F((\sigma-1)h)$, hence we can construct a vector bundle $\cE$ of rank $2$ fitting into Sequence \eqref{seqIdeal} with $\mathcal L=\cO_F((\sigma-1)h)$ thanks to Theorem \ref{tCB}. 

We can now state the main result of this section.

\begin{proposition}
\label{pExistence}
Let $F\subseteq\p3$ be a smooth quartic surface.

\begin{enumerate}
\item Let $\cE$ be an initialized, aCM, bundle of rank $2$ on $F$ with $c_1(\cE)=\cO_F(2h)$, $c_2(\cE)=8$, such that the zero--locus of its  general section is zero--dimensional: then $E$ is an aG scheme of degree $8$ not contained in any plane.

\item Conversely, each aG scheme $E\subseteq F$ of degree $8$ not contained in any plane arises as the zero--locus of some section of a unique initialized, aCM, bundle $\cE$ of rank $2$ on $F$ with $c_1(\cE)=\cO_F(2h)$, $c_2(\cE)=8$.
\end{enumerate}
\end{proposition}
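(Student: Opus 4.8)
The plan is to prove the two parts in opposite directions using the machinery of Theorem \ref{tCB} and Theorem \ref{tDGO}, with the aCM hypothesis on $\cE$ doing most of the work to pin down the socle degree.

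For part (1), I would start from Sequence \eqref{seqIdeal} with $\mathcal{L}=\cO_F(2h)$, so that $E=(s)_0$ has $\deg(E)=c_2(\cE)=8$. Since $\omega_F\cong\cO_F$ on a $K3$ surface, Theorem \ref{tCB} tells us $E$ is CB with respect to $\omega_F\otimes\mathcal{L}=\cO_F(2h)$. The key is to identify this with condition (2) of Theorem \ref{tDGO} by showing the relevant socle degree is $\sigma=3$, i.e. $\sigma-1=2$. To compute $\sigma$, I would twist Sequence \eqref{seqIdeal} and use the aCM hypothesis $h^1\big(F,\cE(th)\big)=0$ together with Lemma \ref{laCM}-type reasoning to control $h^1\big(\p3,\cI_{E\vert\p3}(t)\big)$, thereby computing $\dim_k((S_E)_t)$ for all $t$ directly from the resolution \eqref{resGeneral}; the function $t\mapsto\dim_k((S_E)_t)$ should stabilize at $8$ and be symmetric, which simultaneously verifies condition (1) and forces $\sigma=3$. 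Once both conditions of Theorem \ref{tDGO} hold, $E$ is aG. That $E$ is not contained in any plane follows because a section vanishing in a plane $H$ would force $\cE$ to have a sub-line-bundle $\cO_F(H)$ of too-low degree, contradicting initialization (alternatively, $\dim_k((S_E)_1)=0$ from the resolution means $E$ imposes independent conditions and spans $\p3$).

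For part (2), I would reverse the construction. Given an aG scheme $E\subseteq F$ of degree $8$ not in any plane, I first compute its socle degree. The symmetry condition (1) of Theorem \ref{tDGO} together with $\deg(E)=8$ and the non-planarity (which gives $\dim_k((S_E)_1)=4$) should force $\sigma=3$, so $E$ is CB with respect to $\cO_{\p3}(2)$. As noted in the remark following Theorem \ref{tDGO}, the cohomology of Sequence \eqref{seqInclusion} lets me replace $\cO_{\p3}(2)$ with $\cO_F(2h)=\omega_F\otimes\cO_F(2h)$, so Theorem \ref{tCB} produces a rank $2$ bundle $\cE$ with $\det(\cE)=\cO_F(2h)$ having $E$ as the zero-locus of a section, fitting into Sequence \eqref{seqIdeal}. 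Uniqueness follows from the last clause of Theorem \ref{tCB}, since $h^1\big(F,\cO_F(-2h)\big)=h^1\big(F,\cO_F(2h)\big)=0$ as $\cO_F(2h)$ is aCM. It remains to check $\cE$ is initialized and aCM with $c_2(\cE)=8$: the Chern class computation $c_2(\cE)=\deg(E)=8$ is immediate from \eqref{seqIdeal}, and initialization plus the vanishing $h^1\big(F,\cE(th)\big)=0$ I would obtain by twisting \eqref{seqIdeal} and combining the aCM-ness of $\cO_F$ and of $\cI_{E\vert F}(2h)$ (the latter from $E$ being aCM in $\p3$, which follows from aG via Lemma \ref{laCM}-style arguments applied through Sequence \eqref{seqInclusion}).

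The main obstacle I anticipate is the precise determination that $\sigma=3$ in both directions, and the bookkeeping of the Hilbert function $\dim_k((S_E)_t)$: I must verify that an aG degree-$8$ non-planar scheme has exactly the symmetric Hilbert function $1,4,7,8,8,\dots$ forcing socle degree $3$, and conversely that the aCM rank $2$ bundle with the stated Chern classes produces exactly this Hilbert function. Getting the constant $\sigma-1=2$ to match the twist $c_1(\cE)=\cO_F(2h)$ is the linchpin that makes Theorem \ref{tCB} and Theorem \ref{tDGO} line up, and the non-planarity hypothesis is exactly what is needed to rule out the degenerate Hilbert functions (e.g. those with $\dim_k((S_E)_1)<4$) that would give a different socle degree.
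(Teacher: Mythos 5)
Your overall architecture coincides with the paper's: Theorem \ref{tCB} gives the CB property with respect to $\omega_F\otimes\cO_F(2h)=\cO_F(2h)$, Theorem \ref{tDGO} converts this into the aG property once the Hilbert function of $S_E$ is pinned down to $(1,4,7,8,8,\dots)$ (so socle degree $\sigma=3$), non--planarity rules out the degenerate symmetric $h$--vectors in List \eqref{List}, and uniqueness comes from the last clause of Theorem \ref{tCB} via $h^1\big(F,\cO_F(-2h)\big)=0$ (a check you make explicit and the paper leaves implicit). Your part (1) is essentially the paper's proof, whether one computes the Hilbert function from Resolution \eqref{resGeneral} or, as the paper does, from the twisted cohomology of Sequences \eqref{seqIdeal} and \eqref{seqStandard}; note only the slip in your parenthetical: non--planarity is $\dim_k((I_E)_1)=0$, equivalently $\dim_k((S_E)_1)=4$, not $\dim_k((S_E)_1)=0$.

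In part (2), however, there is a genuine gap. You propose to obtain $h^1\big(F,\cE(th)\big)=0$ for all $t$ by \lq\lq combining the aCM--ness of $\cO_F$ and of $\cI_{E\vert F}(2h)$\rq\rq, asserting that $E$ is aCM in $\p3$. No zero--dimensional scheme of degree $\ge2$ is aCM in this sheaf--cohomological sense: from Sequences \eqref{seqStandard} and \eqref{seqInclusion} one gets $h^1\big(F,\cI_{E\vert F}(th)\big)=\deg(E)-\dim_k((S_E)_t)$, so the very Hilbert function $(1,4,7,8,\dots)$ that you compute forces $h^1\big(F,\cI_{E\vert F}(h)\big)=4$ and $h^1\big(F,\cI_{E\vert F}(2h)\big)=1$ (the paper's Equality \eqref{vanishingIdeal}); Lemma \ref{laCM} is about divisors on $F$ and does not apply to points. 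Consequently your two--out--of--three vanishing argument in the twisted Sequence \eqref{seqIdeal} works only for $t\ge1$ (where $t+2\ge3$) and, by Serre duality, for $t\le-2$; it fails exactly at the crucial twists $t=0,-1$. The missing idea is the paper's connecting--map argument: in Sequence \eqref{seqCohomology}, the vanishing $h^2\big(F,\cE\big)=h^0\big(F,\cE(-2h)\big)=0$ makes $H^1\big(F,\cI_{E\vert F}(2h)\big)\to H^2\big(F,\cO_F\big)$ surjective between spaces of the same dimension $1$, hence injective, forcing $h^1\big(F,\cE\big)=0$; similarly the equality $h^1\big(F,\cI_{E\vert F}(h)\big)=4=h^2\big(F,\cO_F(-h)\big)$ together with $h^2\big(F,\cE(-h)\big)=h^0\big(F,\cE(-h)\big)=0$ (available once initialization is established from non--planarity) forces $h^1\big(F,\cE(-h)\big)=0$. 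Without this step your verification that $\cE$ is aCM does not go through precisely where the Hilbert function of $S_E$ is still below $8$.
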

\begin{proof}
Assume that $\cE$ is as in (1). Trivially $\deg(E)=8$ and we have to prove that $E$ is aG. Theorem \ref{tCB} implies that $E$ is CB with respect to $\cO_F(2h)$. We thus have only to check that $\dim_k((S_E)_t)+\dim_k((S_E)_{2-t})=8$ for each $t\in\bZ$. We obviously have $\dim_k((S_E)_0)=1$ and $\dim_k((S_E)_t)=0$ for $t\le-1$.

The cohomology of Sequence \eqref{seqIdeal} implies $h^1\big(F,\cI_{E\vert F}(th)\big)=h^1\big(F,\cE((t-2)h)\big)=0$ when $t\ge3$ because $h^2\big(F,\cO_F(t-2)\big)=0$ in the same range. The cohomology of Sequence \eqref{seqStandard} with $X=E$ thus gives
$$
\dim_k((S_E)_t)=h^0\big(F,\cO_E(th)\big)-h^1\big(F,\cI_{E\vert F}(th)\big)=8,\qquad t\ge3.
$$

When $t=2$, the cohomology of Sequence \eqref{seqIdeal} gives the exact sequence 
\begin{equation}
\label{seqCohomology}
\begin{aligned}
0\longrightarrow H^1\big(F,\cE\big)\longrightarrow H^1\big(F,\cI_{E\vert F}(2h)\big)&\longrightarrow H^2\big(F,\cO_F\big)\longrightarrow \\
\longrightarrow H^2\big(F,\cE\big)&\longrightarrow H^2\big(F,\cI_{E\vert F}(2h)\big)\longrightarrow 0.
\end{aligned}
\end{equation}
We have $h^1\big(F,\cE\big)=0$, $h^2\big(F,\cO_F\big)=1$ and $h^2\big(F,\cE\big)=h^0\big(F,\cE(-2h)\big)=0$. Thus $h^1\big(F,\cI_{E\vert F}(2h)\big)=1$ whence $\dim_k((S_E)_2)=7$. When $t=1$, a similar argument yields $h^1\big(F,\cI_{E\vert F}(h)\big)=4$, thus $\dim_k((S_E)_1)=4$: in particular $E$ is not contained in any plane. Thus assertion (1) is completely proved.

Conversely let $E$ be aG of degree $8$ on $F$ not contained in any plane. The Artin reduction $R$ (see above) of $S_E$ is a local graded, Gorenstein ring. It is well known that the possible Hilbert functions for $R$ must be symmetric. Thus the possible cases are:
\begin{equation}
\label{List}
(1,6,1),\qquad (1,3,3,1),\qquad(1,2,2,2,1),\qquad(1,1,1,1,1,1,1,1).
\end{equation} 

In the two last cases we would have $\dim_k((S_E)_1)<4$, hence $E$ would be contained in a plane. The first case cannot hold as well: indeed it would imply $\dim_k((S_E)_1)\ge7$. 

Thus we know that only the second case is possible. In particular the socle degree of the Artin reduction of $S_E$ is $3$. It follows that $E$ is CB with respect to $\cO_F(2h)$ and we can construct a rank $2$ bundle $\cE$ fitting into Sequence \eqref{seqIdeal} with $c_1(\cE)=2h$ and $c_2(\cE)=8$ having a section vanishing exactly along $E$ (Theorem \ref{tCB}). We have to check that $\cE$ is initialized and aCM.

From the Hilbert function of $R$, we also obtain  $\dim_k((S_E)_1)=4$, $\dim_k((S_E)_2)=7$ and $\dim_k((S_E)_t)=8$ for $t\ge3$. 
Consequently, from the cohomology of Sequence \eqref{seqStandard} for $X=E$,
\begin{equation}
\label{vanishingIdeal}
h^1\big(F,\cI_{E\vert F}(th)\big)=\left\lbrace\begin{array}{ll}
0 \quad&\text{if $t\ge3$,} \\
1 &\text{if $t=2$,}\\
4 &\text{if $t=1$.}
\end{array}\right.
\end{equation}

The cohomology of Sequence \eqref{seqIdeal} twisted by $\cO_F(-h)$ and the fact that $E$ is not contained in any plane implies that $h^0\big(F,\cE(-h)\big)=0$. Moreover each scheme of degree $8$ and dimension $0$ is trivially contained in at least two quadrics, thus the cohomology of the same sequence also gives $h^0\big(F,\cE\big)\ge2$. We conclude that $\cE$ is initialized.

Again the same sequence twisted by $\cO_F(th)$ and the first Equality \eqref{vanishingIdeal} imply $h^1\big(F,\cE(th)\big)=0$ for $t\ge1$. Looking at Sequence \eqref{seqCohomology}, we deduce $h^1\big(F,\cE\big)=0$, due to the second Equality \eqref{vanishingIdeal} and to the vanishing $h^2\big(F,\cE\big)=h^0\big(F,\cE(-2h)\big)=0$. Using the third Equality \eqref{vanishingIdeal} and $h^2\big(F,\cE(-h)\big)=h^0\big(F,\cE(-h)\big)=0$, a similar argument implies also $h^1\big(F,\cE(-h)\big)=0$. 

By Serre duality we obtain also the vanishing $h^1\big(F,\cE(th)\big)=h^1\big(F,\cE((-t-2)h)\big)=0$ for $t\le -2$. The proof is thus complete.
\end{proof}

\section{Construction of bundles with $n=4$}
\label{sConstruction2gg}
In this section we will show how to modify the methods in \cite{C--K--M1} in order to prove the existence of initialized aCM bundles of rank $2$ fitting into Resolution \eqref{resGeneral} with $n=4$. 

We take a general element $C\in \vert\cO_F(2h)\vert$. The curve $C$ can be assumed of bidegree $(4,4)$ on a smooth quadric surface, it has degree $8$ and genus $g:=9$. Notice that for each divisor $\Gamma$ on $C$ we have the exact sequence 
\begin{equation}
\label{seqCurve}
0\longrightarrow \cO_F(-2h)\longrightarrow \cI_{\Gamma\vert F}\longrightarrow \cO_C(-\Gamma)\longrightarrow0,
\end{equation}
because $ \cI_{C\vert F}\cong\cO_F(-2h)$.

\begin{lemma}
\label{lPencil}
Let $C$ be as above. There is a divisor $E$ on $C$ such that
\begin{enumerate}
\item $\deg(E)=8$;
\item $h^0(C,\cO_C(E)\big)=2$;
\item Both $\cO_C(E)$, and $\omega_C\otimes\cO_C(-E)$ are globally generated.
\end{enumerate}
\end{lemma}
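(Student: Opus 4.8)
The plan is to reduce the three conditions to the existence of a single line bundle on $C$ and then to produce it by Brill--Noether theory, checking that the special (tetragonal) geometry of $C$ does not obstruct genericity.

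First I would record the numerics and reduce. Since $C=F\cap Q$ is a complete intersection of $F$ with a quadric, adjunction gives $\omega_C\cong\cO_C(2h)$, so $\deg\omega_C=2\cdot8=16$, in agreement with $g=9$; equivalently $C$ is a smooth $(4,4)$ curve on the smooth quadric $Q$, and the two rulings of $Q$ restrict to $C$ as base-point-free pencils of degree $4$. For any line bundle $\mathcal L$ of degree $8$ on $C$, Riemann--Roch gives $\chi(\mathcal L)=8+1-9=0$, whence by Serre duality $h^0(\mathcal L)=h^1(\mathcal L)=h^0(\omega_C\otimes\mathcal L^{-1})$. Thus finding a divisor $E$ satisfying $(1)$--$(3)$ is equivalent to finding $\mathcal L\in\Pic^8(C)$ with $h^0(\mathcal L)=2$ such that both $\mathcal L$ and $\omega_C\otimes\mathcal L^{-1}$ are globally generated; one then takes $E$ to be any effective divisor in $\vert\mathcal L\vert$, and $(2)$ automatically forces $h^0(\omega_C\otimes\mathcal L^{-1})=2$ as well.

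Next I would invoke the existence and dimension theory of the Brill--Noether loci $W^r_d(C)$. The Brill--Noether number is $\rho=g-(r+1)(g-d+r)=9-2\cdot2=5\ge0$, so $W^1_8\ne\emptyset$ and every component has dimension at least $5$. On the other hand $C$ is non-hyperelliptic, its gonality being $4$ (computed by the rulings), so Martens' theorem gives $\dim W^1_8\le 8-2\cdot1-1=5$; hence $\dim W^1_8=5$. Moreover a general $\mathcal L\in W^1_8$ has $h^0(\mathcal L)=2$, since Martens' theorem also yields $\dim W^2_8\le 8-2\cdot2-1=3<5$, so $W^2_8$ is a proper subvariety.

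The heart of the argument, and the step I expect to be the main obstacle, is to show that a general $\mathcal L\in W^1_8$ is globally generated with globally generated residual. If $\mathcal L$ is not globally generated, then $\mathcal L\cong\mathcal L_0\otimes\cO_C(B)$ with $B>0$ effective and $\mathcal L_0$ a globally generated pencil of degree $m=8-\deg B$; here $h^0(\mathcal L_0)=h^0(\mathcal L)=2$, and $m\ge 4$ because $C$ has gonality $4$, so $m\in\{4,5,6,7\}$. This locus is dominated by the choice of $\mathcal L_0\in W^1_m$ and of an effective divisor of degree $8-m$, hence has dimension at most $(8-m)+\dim W^1_m$, and I must verify $\dim W^1_m\le m-4$ for each such $m$. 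For $m=4$ this says that the only minimal pencils are the two rulings, so $\dim W^1_4=0$: this is exactly where the tetragonal geometry of $C$ enters. For $m=5,6,7$ the general $C\in\vert 2h\vert$ is neither trigonal, nor bielliptic, nor a smooth plane quintic, so Mumford's refinement of Martens' theorem gives the sharper bound $\dim W^1_m\le m-2\cdot1-2=m-4$. In every case the non--globally--generated locus has dimension at most $4<5$; the same bound holds for its image under the residuation involution $\mathcal L\mapsto\omega_C\otimes\mathcal L^{-1}$, which preserves $W^1_8$ and the condition $h^0=2$. Therefore a general $\mathcal L\in W^1_8$ meets all the requirements, and any $E\in\vert\mathcal L\vert$ proves the lemma.

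The delicate points I would treat with care are precisely the exclusions feeding Mumford's theorem: that a general $(4,4)$ curve has gonality exactly $4$ with only the two ruling pencils, which follows from its Clifford index being $2$; and that it is not bielliptic, which is a dimension count, since the $18$-dimensional family of smooth $(4,4)$ curves in $\mathcal M_9$ cannot lie in the $16$-dimensional bielliptic locus. Alternatively, one could bypass the full Brill--Noether estimate by exhibiting a single explicit pair $(C_0,\mathcal L_0)$ for which $(1)$--$(3)$ hold, possibly with the help of \cocoa{}, and then conclude for the general $C\in\vert 2h\vert$ by the openness of conditions $(2)$ and $(3)$ along the relative Brill--Noether scheme over the family of curves.
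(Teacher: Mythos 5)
Your proposal follows, in its skeleton, exactly the paper's own proof: a Brill--Noether dimension count showing that inside each (at least $5$-dimensional) component of $W^1_8$ the loci of pencils with $h^0\ge3$, with base points, or with non--globally--generated residual all have dimension at most $4$, using the residuation involution and Mumford's refinement of Martens' theorem, which in turn rests on the exclusions non-hyperelliptic, non-trigonal, non-bielliptic, not a plane quintic. The only structural differences are minor: the paper bounds the base-point locus by the image of $C\times W^1_7\to W^1_8$, $(P,\mathcal L)\mapsto\mathcal L\otimes\cO_C(P)$, so it needs only $\dim W^1_7\le3$, whereas you strip the whole base locus and need $\dim W^1_m\le m-4$ for $m=4,\dots,7$; and you get $W^2_8\subsetneq W^1_8$ from Martens' bound $\dim W^2_8\le3$ where the paper quotes Theorem IV.3.5 of \cite{A--C--G--H}. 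Both variants work, and note that your $m=4$ case does not actually require the strong claim that the only $g^1_4$'s are the two rulings: Mumford's theorem applies at $d=4$ too and yields $\dim W^1_4\le 4-2-2=0$, which is all your count uses.

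There is, however, one genuine gap: your justification that $C$ is not bielliptic. The moduli count (the $18$-dimensional family of smooth $(4,4)$ curves versus the $16$-dimensional bielliptic locus in $\cM_9$) only shows that the \emph{general} curve of bidegree $(4,4)$ is not bielliptic. But in the lemma $C$ is a general member of $\vert\cO_F(2h)\vert$ on a \emph{fixed} smooth quartic $F$: this is a $9$-dimensional linear system, so the curves actually available sweep out a subvariety of $\cM_9$ of dimension at most $9<16$, and a priori this subvariety could lie entirely inside the bielliptic locus. Your dimension count compares the wrong families and proves nothing about the particular $C$ at hand. The same caveat applies to the parenthetical appeal to ``Clifford index $2$'' for the gonality statement: what is needed is that \emph{every} smooth curve of bidegree $(4,4)$ on a smooth quadric is non-hyperelliptic, non-trigonal and non-bielliptic, which is precisely what the paper extracts from Exercises IV.D-5 and IV.F-2 of \cite{A--C--G--H}. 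To close your proof, replace the moduli count by such a curve-by-curve argument (for instance, a bielliptic curve of genus $9$ carries the positive-dimensional family of $g^1_4$'s pulled back from $\Pic^2$ of the elliptic curve, and one must show directly that the geometry of $C$ on the quadric forbids this); once the exclusions are established for the actual curve $C$, the rest of your argument is correct.
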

\begin{proof}
Recall that the linear series of projective dimension at least $r$ and degree $\delta$ on $C$ are parameterized by a union of projective varieties usually denoted by $W^r_\delta(C)$: in what follows we will omit $C$ because it is fixed.

We have that $g-8+1\ge0$, thus no component of $W^1_8$ is entirely contained in $W^2_8$ (see \cite{A--C--G--H}, Theorem IV.3.5). In particular $W^1_8\setminus W^2_8$ is open and dense in each component of $W^1_8$.

Let us consider the map $C\times W^1_{7}\to W^1_8$ defined by $(P,\mathcal L)\mapsto \mathcal L\otimes\cO_C(P)$. Trivially the subset in $W^1_8$ of linear series with a base point is contained in the image $X$ of the above map.  Notice that Lemma IV.3.3 of \cite{A--C--G--H} implies that each component of $W^1_8$ has dimension at least the Brill--Noether number $\varrho(9,8,1)=5$.

The curve $C$ is a curve of bidegree $(4,4)$ on a smooth quadric. Hence Exercises IV. D--5 and IV.F--2 of \cite{A--C--G--H} imply that $C$ is neither hyperelliptic, nor trigonal, nor bielliptic. Moreover, it is not isomorphic to a smooth plane quintic, because its genus is $9$. 

Since $2\le g-2=7$, it follows from Theorems IV.5.1 and IV.5.2 of \cite{A--C--G--H} that each component of $W^1_{7}$ has dimension $3$. Thus each component of $X$ has dimension at most $4$, which is smaller than $\varrho(9,8,1)=5$.
In particular $W^1_8\setminus X$ is open and dense in each component of $W^1_8$.

Finally, Riemann--Roch theorem on $C$ implies that the residuation $i$ with respect to $\omega_C$ is an involution $i\colon W^1_8\to W^{1}_{8}$ because $\deg(\omega_C(-E))=\deg(E)$. In particular $i^{-1}(W^1_8\setminus X)$ is open and dense in each component of $W^1_8$.

It suffices to take a linear system $\mathcal L$ in the intersection 
$$
(W^1_8\setminus X)\cap i^{-1}(W^1_8\setminus X)\cap (W^1_8\setminus W^2_8)
$$
which is certainly non--empty due to the above discussion. Each $E\in\vert\mathcal L\vert$ is a divisor with the desired properties.
\end{proof}

We can now state and prove the main result of this section.

\begin{theorem}
\label{t2gg}
Let $F\subseteq\p3$ be a smooth quartic surface.

There exist bundles $\cE$ of rank $2$ on $F$ whose minimal free resolution on $\p3$ is Resolution \eqref{resGeneral} with $n=4$. 

The general bundle fitting in Resolution \eqref{resGeneral} with $n=4$ is indecomposable. Decomposable bundles are exactly the direct sums $\cO_F(D)\oplus\cO_F(2h-D)$ where $D$ is a smooth elliptic quartic curve on $F$.
\end{theorem}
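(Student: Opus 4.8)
The plan is to construct the bundle via the Serre correspondence developed in Section~\ref{sConstruction}, using the curve $C$ and divisor $E$ supplied by Lemma~\ref{lPencil}, and then to verify the three assertions (existence of Resolution~\eqref{resGeneral} with $n=4$; generic indecomposability; and the precise identification of the decomposable locus). First I would use the divisor $E\subseteq C\subseteq F$ of degree $8$ produced in Lemma~\ref{lPencil}. Since $E$ is a divisor on the smooth curve $C$, it is a locally complete intersection subscheme of $F$ of dimension $0$ with $\deg(E)=8$. To invoke Proposition~\ref{pExistence}(2) and obtain an initialized, aCM rank~$2$ bundle $\cE$ with $c_1(\cE)=\cO_F(2h)$, $c_2(\cE)=8$ fitting into Sequence~\eqref{seqIdeal}, I must check that $E$ is arithmetically Gorenstein and not contained in a plane. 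The aG property I would verify through Theorem~\ref{tCB} together with the Cayley--Bacharach condition: the key input is condition (3) of Lemma~\ref{lPencil}, namely that both $\cO_C(E)$ and $\omega_C\otimes\cO_C(-E)$ are globally generated. Because $\omega_F\cong\cO_F$ and $\omega_C\cong\omega_F(2h)\otimes\cO_C\cong\cO_C(2h)$ by adjunction on the $K3$ surface, the relevant twist $\omega_F\otimes\cO_F(2h)=\cO_F(2h)$ restricts on $C$ to $\omega_C$, so CB with respect to $\cO_F(2h)$ translates into the global generation of $\omega_C\otimes\cO_C(-E)$, which is exactly hypothesis (3). The condition $h^0\big(C,\cO_C(E)\big)=2$ controls $h^0\big(F,\cI_{E\vert F}(2h)\big)$ through the cohomology of Sequence~\eqref{seqCurve} and forces $E$ to impose the correct number of conditions, so that $E$ is not contained in any plane. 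By Proposition~\ref{pExistence}(2) the resulting $\cE$ is initialized and aCM, and by Proposition~\ref{pResolution} it fits into Resolution~\eqref{resGeneral} with $n\in\{4,6,8\}$.

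Next I would rule out $n=6,8$ for the general such $\cE$, which is what pins down $n=4$. The decisive distinction comes from the observation highlighted in the introduction and in Remark~\ref{0Exotic}: a bundle with $n=8$ has every non-zero section vanishing on a divisor (in $\vert D\vert$), whereas for $n=4$ the general section vanishes in pure codimension~$2$. By construction $\cE$ has a section vanishing precisely along the zero-dimensional scheme $E$, so $\cE$ cannot fit Resolution~\eqref{resGeneral} with $n=8$. To exclude $n=6$ I would argue that $\cE(h)$ is globally generated (as follows from the aCM property and the shape of the resolution) and examine the number of independent quadrics through $E$: the Hilbert function computation of Proposition~\ref{pExistence} gives $\dim_k((S_E)_2)=7$, hence $h^0\big(F,\cE\big)=4$ and $h^0\big(F,\cI_{E\vert F}(2h)\big)=3$, which matches the four generators in degree~$0$ and forces the minimal number of degree~$1$ generators to vanish, i.e.\ $n=4$. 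Alternatively, and more robustly, I would reuse the commutative-diagram argument of Theorem~\ref{t2Exotic}: if $\cE$ fit Resolution~\eqref{resGeneral} with $n=6$ one would produce, by comparing resolutions, a contradiction on the ranks of the linear blocks. This establishes the first assertion.

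For the statement about decomposability, suppose $\cE\cong\cO_F(D)\oplus\cO_F(2h-D)$. Since $\cE$ is aCM, each summand is an aCM line bundle, and since $\cE$ is initialized with $h^0\big(F,\cE\big)=4$, after possibly exchanging the factors $\cO_F(D)$ is initialized and $\cO_F(2h-D)$ is either initialized or has no sections. The constraint $c_2(\cE)=D(2h-D)=8$ combined with Proposition~\ref{pWa} leaves only finitely many numerical possibilities for $(D^2,Dh)$, exactly as in the first part of the proof of Theorem~\ref{t2Exotic}. The cases $D^2=\pm2,\pm4$ would force one summand into case (a) or (c) of Proposition~\ref{pWa}, and the corresponding free resolutions~\eqref{seqDeterminantal} and its dual~\eqref{seqDeterminantalDual} would make the $n$ of the direct sum equal to $8$, not $4$; hence these are incompatible with $n=4$. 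The only case surviving is $D^2=(2h-D)^2=0$ and $Dh=h(2h-D)=4$, which by Lemma~\ref{lQuartic} means $D$ is a smooth elliptic quartic curve (in the globally generated subcase) and $\cO_F(D)\oplus\cO_F(2h-D)$ indeed has a free resolution of the form~\eqref{resGeneral} with $n=4$. Thus every decomposable bundle with $n=4$ is of this shape. Finally I would show the general bundle is indecomposable: the family of all $\cE$ with $n=4$ has dimension strictly larger than the family of such split bundles (which is governed by the choice of an elliptic quartic $D$, a bounded-dimensional locus in $\Pic(F)$ together with the extension data $h^1\big(F,\cO_F(2D-2h)\big)$), so a general member of the construction cannot be decomposable.

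The main obstacle I expect is the clean exclusion of $n=6$ for the general bundle. Ruling out $n=8$ is immediate from the codimension-$2$ versus divisorial dichotomy, but distinguishing $n=4$ from $n=6$ requires a genuine control of the minimal free resolution: either a careful count of generators through the Hilbert function of $E$ (which must be shown to be the generic, rather than a special, one for the $E$ produced by Lemma~\ref{lPencil}), or a diagram-chase of the type used in Theorem~\ref{t2Exotic} to derive a rank contradiction. Getting the genericity right---so that the constructed $\cE$ is the \emph{general} member of its family and therefore both indecomposable and of minimal $n$---is the delicate technical point, and I would expect the bulk of the work to lie there.
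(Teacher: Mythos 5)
The decisive gap is your exclusion of $n=6$, which is exactly the step you flag as delicate, and neither of your two suggested mechanisms works. The Hilbert-function count proves nothing: \emph{every} bundle fitting Resolution \eqref{resGeneral} has $h^0\big(F,\cE\big)=4$ regardless of $n$, and the degree-$8$ schemes used in Section \ref{sConstruction2ngg} to build the $n=6$ bundles have exactly the same numerical invariants as yours --- Artin reduction $(1,3,3,1)$, $\dim_k((S_E)_1)=4$, $\dim_k((S_E)_2)=7$, $h^0\big(F,\cI_{E\vert F}(2h)\big)=3$. The number of degree-$1$ generators is $n-4=16-\rk\big(S_1\otimes H^0(F,\cE)\to H^0(F,\cE(h))\big)$, which the Hilbert function of $E$ cannot see. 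What actually distinguishes $n=4$ is that $E$ is the \emph{complete intersection} of the three quadrics through it, equivalently that $\cI_{E\vert F}(2h)$ and hence $\cE$ are globally generated, and this is precisely where hypothesis (3) of Lemma \ref{lPencil} on $\omega_C\otimes\cO_C(-E)$ must be spent: the paper builds Diagram \eqref{CD} from Sequences \eqref{seqCurve} and \eqref{seqIdeal} and concludes by Five's Lemma from the surjectivity of $H^0\big(C,\omega_C(-E)\big)\otimes\cO_C\to\omega_C(-E)$. You instead spent that hypothesis on Cayley--Bacharach, but CB with respect to $\cO_F(2h)$ comes from the \emph{other} half of Lemma \ref{lPencil}: twisting \eqref{seqCurve} gives $h^0\big(F,\cI_{\Gamma\vert F}(2h)\big)=9-\deg(\Gamma)+h^0\big(C,\cO_C(\Gamma)\big)$, so CB amounts to $h^0\big(C,\cO_C(E')\big)=1$ for each colength-one $E'\subseteq E$, i.e.\ to base-point-freeness of the pencil $\vert E\vert$ (global generation of $\cO_C(E)$). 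With (3) misallocated, your proposal has no tool left to force $n=4$; and the fallback diagram chase from Theorem \ref{t2Exotic} does not transfer, since there one maps the known minimal resolution of a line subbundle $\cO_F(D)\subseteq\cE$ into that of $\cE$, while your $\cE$ comes with no such subbundle.

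Two secondary problems. In the decomposable analysis, your exclusion of $D^2=\pm2$ is misattributed: Sequence \eqref{seqDeterminantal} and its dual belong to case (d) of Proposition \ref{pWa} ($D^2=4$, $Dh=6$, giving $n=8$), while the quintic/cubic pairs of cases (a), (c) yield decomposables with $n=6$ (cf.\ Theorem \ref{t2ngg}), so your claim that they force $n=8$ is false even though the conclusion $n\ne4$ is right; the paper's route is cleaner: global generation of $\cE$ (once established) passes to both summands, forcing $h^0=2$ for each and $D^2,(2h-D)^2\ge0$, after which case (d) dies by $h^0\big(F,\cO_F(2h-D)\big)\ne0$ and $D^2=2$ by $(2h-D)^2=-2$, leaving smooth elliptic quartics via Proposition \ref{SD} and Bertini. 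Finally, generic indecomposability by your dimension count is unjustified as stated: you have not produced a positive-dimensional family of $n=4$ bundles dominating the split ones. The paper argues directly: if $F$ carries no elliptic quartic the claim is automatic; otherwise $h^1\big(F,\cO_F(2D-2h)\big)\ge6$ yields non-split extensions \eqref{seqSplitGG}, any isomorphism with a split bundle is shown to split the extension, and the resulting $\cE$ is checked to be globally generated (again via the Diagram \eqref{CD} argument together with $h^1\big(F,\cO_F(D)\big)=0$), hence lands in Resolution \eqref{resGeneral} with $n=4$.
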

\begin{proof}
Take $E\subseteq C\subseteq F$ as in Lemma \ref{lPencil}. Let $H$ be a general hyperplane section of $C$. Since $h^0\big(C,\cO_C(H)\big)\ne h^0\big(C,\cO_C(E)\big)$ and $\deg(H)=\deg(E)$ we conclude that $h^0\big(C,\cO_C(H-E)\big)=h^0\big(C,\cO_C(E-H)\big)=0$. In particular the twisted cohomology of Sequence \eqref{seqCurve} with $\Gamma=E$ and Riemann--Roch theorem on $C$ imply
$$
h^0\big(F,\cI_{E\vert F}(th)\big)=\left\lbrace\begin{array}{ll}
0 \quad&\text{if $t=0,1$,} \\
3 \quad&\text{if $t=2$,} \\
12 &\text{if $t=3$.}
\end{array}\right.
$$
A very easy computation now shows that condition (1) in Theorem \ref{tDGO} is satisfied by $E$ with $\sigma=3$. In order to construct an aCM vector bundle from $E$ we only need to show that $E$ is CB with respect to $\cO_F(2h)$.

To this purpose we notice that twisting the Sequence \eqref{seqCurve} by $\cO_F(2h)$ and taking its cohomology, the isomorphism $\cO_F(2h)\otimes\cO_C\cong\omega_C$ implies that
$$
h^0\big(F,\cI_{\Gamma\vert F}(2h)\big)=9-\deg(\Gamma)+h^0\big(C,\cO_C(\Gamma)\big).
$$
For each subscheme $E'\subseteq E$ of degree $7$, Lemma \ref{lPencil} implies that $h^0\big(C,\cO_C(E')\big)=1$, because $\cO_C(E')$ is globally generated. Thus the above equality yields $h^0\big(F,\cI_{E'\vert F}(2h)\big)=3=h^0\big(F,\cI_{E\vert F}(2h)\big)$, i.e. $E$ is CB with respect to $\cO_F(2h)$. 

It follows that $E$ is aG of degree $8$, hence Proposition \ref{pExistence} yields the existence of an initialized, aCM bundle $\cE$ of rank $2$ on $F$ with $c_1(\cE)=2h$ and $c_2(\cE)=8$.

In particular the cohomology of Sequence \eqref{seqIdeal} twisted by $\cO_F(2h)$ gives $h^1\big(F,\cI_{E\vert F}(2h)\big)=h^1\big(F,\cE(2h)\big)=0$, hence we have the diagram with exact rows
\begin{equation}
\label{CD}
\begin{CD}
0@>>> \cO_F@>>>   H^0\big(F,\cI_{E\vert F}(2h)\big)\otimes \cO_F@>>> H^0\big(C,\omega_C(-E)\big)\otimes \cO_C@>>>0\\
@.@|  @VVV @VVV\\
0@>>> \cO_F@>>>   \cI_{E\vert F}(2h)@>>>\omega_C(-E)@>>>0.\\
\end{CD}
\end{equation}
Five's Lemma and the surjectivity of the vertical map on the right (following from Lemma \ref{lPencil}) imply that $\cI_{E\vert F}(2h)$ is globally generated.  

A similar argument applied to Sequence \eqref{seqInclusion} twisted by $\cO_{\p3}(2)$ yields that $\cI_E(2)$ is globally generated too, thus $E$ is the complete intersection of three quadrics. 

Again the same argument applied to Sequence \eqref{seqIdeal} and the vanishing $h^1\big(F,\cO_F\big)=0$ yield that  $\cE$ is globally generated too, thus its minimal free resolution is Resolution \eqref{resGeneral} with $n=4$. Thus the proof of the existence of the bundle $\cE$ is completed.

Assume that such a bundle $\cE$ is decomposable. We can write $\cE\cong\cO_F(D)\oplus\cO_F(2h-D)$ for some divisor $D$ on $F$. Moreover, we know that $(2h-D)D=c_2(\cE)=8$ and
\begin{equation}
\label{sumCohomology}
h^i\big(F,\cE\big)=h^i\big(F,\cO_F(D)\big)+h^i\big(F,\cO_F(2h-D)\big),\qquad i=0,1,2.
\end{equation}
Thus, both $\cO_F(D)$ and $\cO_F(2h-D)$ are globally generated and aCM. It follows that $h^0\big(F,\cO_F(D)\big)=h^0\big(F,\cO_F(2h-D)\big)\ge2$, hence the equality must hold, because $h^0\big(F,\cE\big)=4$. In particular $\cO_F(D)$ and $\cO_F(2h-D)$ are initialized and $D^2, (2h-D)^2\ge0$.

Due to case (d) of Proposition \ref{pWa}, $D^2=4$ is not possible, because $h^0\big(F,\cO_F(2h-D)\big)\ne0$. If $D^2=2$, then $hD=5$, hence $(2h-D)^2=-2$, contradicting its positivity proved above. Thus $D^2=(2h-D)^2=0$ and, consequently, $hD=(2h-D)h=4$, because $(2h-D)D=8$. Finally we also deduce that $D$ and $2h-D$ can be assumed to be smooth elliptic quartic curves (Proposition \ref{SD} and Bertini theorem). 

We now prove the existence of indecomposable, initialized aCM bundle $\cE$ of rank $2$ with $c_1(\cE)=\cO_F(2h)$ and $c_2(\cE)=8$ on $F$ whose minimal free resolution on $\p3$ is Resolution \eqref{resGeneral} with $n=4$. To this purpose we first notice that, if $F$ does not contain smooth elliptic quartic curves each such bundle (whose existence was already proved above) is automatically indecomposable, due to the above discussion.

On the other hand, assume that $F$ contains a smooth elliptic quartic curve $D$: thus both $\cO_F(D)$ and $\cO_F(2h-D)$ are globally generated (see Proposition \ref{SD} and Lemma \ref{lQuartic}). For each non--zero $t\in\bZ$ we have $\cO_F(th)\not\cong\cO_F(tD)$, because $h^2=4\ne 0=D^2$. Thus both $\cO_F(t(D-h))$ and $\cO_F(t(h-D))$ are not effective because $t(h-D)h=0$. Thus they are both initialized.

Equality \eqref{RRgeneral}  implies $\chi(\cO_F(2D-2h))=-6$, hence $h^1\big(F,\cO_F(2D-2h)\big)\ge6$.  
It follows the existence of many non trivial extensions
\begin{equation}
\label{seqSplitGG}
0\longrightarrow\cO_F(D)\longrightarrow\cE\longrightarrow\cO_F(2h-D)\longrightarrow0,\qquad Dh=4,\ p_a(D)=1.
\end{equation}

If $\cE$ is splitting, then the first part of the proof would show the existence of a  smooth elliptic quartic curve $A$ on $F$ such that $\cE\cong\cO_F(A)\oplus\cO_F(2h-A)$. Thus we should have a map $\cO_F(D)\to\cO_F(A)$, i.e. a section in $H^0\big(F,\cO_F(D-A)\big)$. If such a space is non--zero, then $\cO_F(D)\cong\cO_F(A)$ and the above sequence would split. Thus such a map is zero and we have a non--zero map $\cO_F(D)\to\cO_F(2h-A)$. It follows that $\cO_F(2h-A)\cong\cO_F(D)$ and $\cO_F(A)\cong\cO_F(2h-D)$. 

We deduce that there are extensions as above with an indecomposable sheaf $\cE$ in the middle. Recall that $\cO_F(D)$ and $\cO_F(2h-D)$ are globally generated, aCM and initialized. Taking the cohomology of Sequence \eqref{seqSplitGG} twisted by $\cO_F(th)$ is then very easy to check that $\cE$ is aCM and initialized as well. Moreover, the same argument used in Diagram \eqref{CD} and the vanishing $h^1\big(F,\cO_F(D)\big)=0$ also imply that $\cE$ is globally generated. We conclude that it fits in Resolution \eqref{resGeneral} with $n=4$. Finally it is immediate to check that $c_1(\cE)=\cO_F(2h)$ and $c_2(\cE)=8$.
\end{proof}

\begin{remark}
\label{rTwiceGG}
Let $\cE$ fit into Resolution \eqref{resGeneral} with $n=4$ and into Sequence \eqref{seqSplitGG} for some smooth elliptic quartic curve $D$. Then $\cO_F(2h-D)$ is globally generated too, hence there is a smooth elliptic quartic curve $\overline{D}\in \vert 2h-D\vert$. 

As in Remark \ref{rTwiceExotic}, $D(2h-D)=4\ne D^2$, thus $\vert D\vert\ne \vert 2h-D\vert$. We can construct a second family of indecomposable bundles $\overline{\cE}$ fitting into Resolution \eqref{resGeneral} with $n=4$. Moreover each such bundle fits into
$$
0\longrightarrow \cO_F(2h-D)\longrightarrow\overline{\cE} \longrightarrow\cO_F(D)\longrightarrow0.
$$

Assume the existence of an isomorphism $\cE\cong\overline{\cE}$ where $\cE$ fits into Sequence \eqref{seqSplitGG}. We would have by composition a map $\cO_F(D)\to\cO_F(D)$. If this map is zero, then the image of $\cO_F(D)$ inside $\overline{\cE}$ would be contained in the image of $\cO_F(2h-D)$ in the above sequence, hence we would obtain a non--zero section of $H^0\big(F,\cO_F(2h-2D)\big)$. 

Since $(2h-2D)h=0$ this latter space is non zero if and only if $\cO_F(D)\cong\cO_F(2h-D)$. Since $D^2=0$ and $D(2h-D)=8$, it follows that such an isomorphism cannot exist, thus $h^0\big(F,\cO_F(2h-2D)\big)=h^0\big(F,\cO_F(2D-2h)\big)=0$.

We conclude that the map $\cO_F(D)\to\cO_F(D)$ is non--zero, hence it is the identity map. In particular the surjection $\overline{\cE}\to\cO_F(D)$ has a section, thus $\cO_F(D)$ is a direct summand of $\overline{\cE}$, again a contradiction.

We conclude that the two families obtained above are disjoint.
\end{remark}

Theorem \ref{t2gg} has the following corollary as its immediate consequence.

\begin{corollary}
\label{cPfaffian}
Let $F\subseteq\p3$ be a smooth quartic surface.

Then the equation of $F$ is the pfaffian of a $4\times4$ skew--symmetric matrix with quadratic entries.
\end{corollary}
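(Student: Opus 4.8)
The goal is to show that the defining equation $f$ of any smooth quartic surface $F\subseteq\p3$ can be written as the pfaffian of a $4\times4$ skew-symmetric matrix with quadratic entries. The plan is to read this off directly from the resolution structure established in the preceding theorem. By Theorem \ref{t2gg}, every smooth quartic $F$ supports a rank $2$ bundle $\cE$ whose minimal free resolution on $\p3$ is Resolution \eqref{resGeneral} with $n=4$, and by Proposition \ref{pResolution} the map $\varphi$ in that resolution is given by a skew-symmetric matrix $\Phi$ with $\pf(\Phi)=f$.

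First I would specialize the general shape of $\Phi$ from Equality \eqref{matrixPhi} to the case $n=4$. When $n=4$ there are no summands $\cO_F(-1)$ in the resolution (the block $A$ is empty), so the matrix $\Phi$ reduces to its upper-left block $B$, which is exactly a $4\times4$ skew-symmetric matrix whose entries are the quadratic forms recording the degree-$2$ maps $\cO_{\p3}(-2)\to\cO_{\p3}$. Thus $\Phi=B$ is a $4\times4$ skew-symmetric matrix with quadratic entries.

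Next I would invoke the pfaffian identity already built into the construction: the resolution \eqref{resGeneral} comes from the first part of Theorem B of \cite{Bea}, which guarantees precisely that $\pf(\Phi)=f$, i.e. $f=\{\ \pf(\Phi)=0\ \}$ as an equation (up to a non-zero scalar). Since $\Phi=B$ has quadratic entries, this says exactly that $f$ is the pfaffian of a $4\times4$ skew-symmetric matrix with quadratic entries. Writing out the pfaffian of such a matrix in terms of its entries $b_{ij}$ gives $\pf(\Phi)=b_{12}b_{34}-b_{13}b_{24}+b_{14}b_{23}$, which recovers the explicit form $f=q_1q_2+q_3q_4+q_5q_6$ advertised in the introduction, with the $q_i$ suitable quadratic forms.

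There is essentially no obstacle here: the corollary is a formal consequence of the existence statement in Theorem \ref{t2gg} together with the already-established skew-symmetry and pfaffian identity of Proposition \ref{pResolution} and Equality \eqref{matrixPhi}. The only point requiring a word of care is confirming that the $n=4$ specialization genuinely forces $\Phi$ to be the $4\times4$ block $B$ with quadratic entries and no linear part, which follows immediately since the $\cO_F(-1)$ summands contributing the block $A$ are absent when $n=4$. Hence the statement follows at once.
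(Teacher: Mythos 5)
Your proposal is correct and follows exactly the paper's route: the paper gives no separate proof, calling the corollary an ``immediate consequence'' of Theorem \ref{t2gg}, and the content you supply---that the $n=4$ case of Resolution \eqref{resGeneral} forces $\Phi$ to be the $4\times4$ skew--symmetric block $B$ with quadratic entries satisfying $\pf(\Phi)=f$ by Beauville's Theorem B---is precisely the intended reading. Your explicit expansion $\pf(\Phi)=b_{12}b_{34}-b_{13}b_{24}+b_{14}b_{23}$ recovering $f=q_1q_2+q_3q_4+q_5q_6$ matches the remark in the paper's introduction as well.
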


\begin{remark}
There is an interesting interpretation of the corollary above. Indeed let $\Bbb X\subseteq\vert \cO_{\p3}(4)\vert\cong\p{34}$ be the locus of quartics with equation $q_1q_2=0$ for suitable quadratic forms $q_1,q_2$. We denote by $\Bbb S$ the locus in $\vert \cO_{\p3}(4)\vert$ of smooth quartic surfaces.

Let
$$
\sigma_3^0(\Bbb X):=\bigcup_{x_1,x_2,x_3\in\Bbb X}\langle x_1,x_2,x_3\rangle\subseteq \vert \cO_{\p3}(4)\vert
$$
where $\langle x_1,x_2,x_3\rangle$ is the linear span inside $\vert \cO_{\p3}(4)\vert$ of the points $x_1,x_2,x_3$. The $3$--secant variety  $\sigma_3(\Bbb X)$ of $\Bbb X$ is the closure of $\sigma_3^0(\Bbb X)$.

It is well--known that $\sigma_3(\Bbb X)$ fills the whole space (see \cite{C--C--G2}). This means in particular that there is an open subset $\mathcal U\subseteq \vert \cO_{\p3}(4)\vert$ whose elements are in $\sigma_3^0(\Bbb X)$. The above Corollary \ref{cPfaffian} shows that $\mathcal U\supseteq\Bbb S$.
\end{remark}

\section{Construction of bundles with $n=6$}
\label{sConstruction2ngg}
Each twisted cubic $C\subseteq\p3$ obviously intersects $F$ in a scheme $X$ of degree $12$. If $E\subseteq X$ is a scheme of degree $8$, then the corresponding residual scheme $Y$ inside $X$ is trivially the zero--locus of a section of $\omega_C(2h)$, because $\omega_C\cong\cO_{\p1}(-2)$.
Thanks to Lemma 5.4  of \cite{K--M--MR--N--P}, we conclude that $E$ is aG. 

The Artin reduction $R$ of the homogeneous coordinate ring of $E$ has length $8$ and its Hilbert function is symmetric (because it is Gorenstein and graded). Thus the possible Hilbert functions for $R$ are in the List \eqref{List}. 

In the fourth (resp. third) case $E$ would be contained in a line (resp. a plane), because $\dim_k((S_E)_1)=1$ (resp. $2$). This is not possible because a twisted cubic has no trisecant (resp. a subscheme of $C$ contained in a plane has at most degree $3$). The first case cannot occur too, because it would imply $\dim_k((S_E)_1)=7$.

It follows that the second case occurs, hence the cohomology of Sequence \eqref{seqInclusion} gives 
$$
h^0\big(F,\cI_{E\vert F}(h)\big)\le h^0\big(F,\cI_{E}(1)\big)=h^0\big(\p3,\cO_{\p3}(1)\big)-\dim_k((S_E)_1)=0.
$$
In particular there is an initialized aCM bundle $\cE$ of rank $2$ fitting into Sequence \eqref{seqIdeal} due to Proposition \ref{pExistence}. Looking at the Hilbert function of $S_E$ we deduce that $h^0\big(\p3,\cI_E(2)\big)=3$. Thanks to a well--known result (see \cite{B--E}) the homogeneous ideal $I_E\subseteq S$ is generated by the $(\delta-1)\times(\delta-1)$ pfaffians of a skew--symmetric matrix $\Delta$ of odd order $\delta$ and there is a minimal free resolution of the form
\begin{align*}
0\longrightarrow S(-6)\longrightarrow &S(-4)^{\oplus3}\oplus S(-3)^{\oplus\beta}\oplus L^\vee(-6)\longrightarrow \\
&\mapright{\Delta}S(-2)^{\oplus3}\oplus S(-3)^{\oplus\beta}\oplus L\longrightarrow I_E\longrightarrow0
\end{align*}
where $L$ is a direct sum of $S(-j)$'s with $j\ge4$. Since the resolution is minimal, it follows that $L=0$, hence $\beta$ must be even. 
In particular we know that
$$
\Delta=\left(\begin{array}{cc}
B&A\\
-{}^tA&0
\end{array}
\right)
$$
where $B$ is a $3\times3$ skew--symmetric matrix with quadratic entries, $A$ is a $3\times \beta$ matrix with linear entries and $0$ is the $\beta\times\beta$ zero--matrix. If $\beta\ge4$, then the subpfaffians of $\Delta$ obtained by deleting one of the three first rows and columns should vanish, thus the above resolution would not be minimal. We deduce that $\beta\le2$. 

We conclude that either $\beta=0$ and the scheme $E$ is the complete intersection of three quadrics, or $\beta=2$ and $E$ is the degeneracy locus of a skew-symmetric matrix of the form
\begin{equation*}
\left(\begin{array}{ccccc}
0&b_1&b_2&a_1&a_4\\
-b_1&0&b_3&a_2&a_5\\
-b_2&-b_3&0&a_3&a_6\\
-a_1&-a_2&-a_3&0&0\\
-a_4&-a_5&-a_6&0&0
\end{array}
\right),
\end{equation*}
where $a_i$ and $b_j$ are respectively linear and quadratic forms.

Notice that $h^0\big(F,\cI_{E\vert F}(2)\big)=3$, hence the quadrics containing $E$ are the same quadrics generating the ideal of $C$. It follows that we are necessarily in the second case. 

\begin{theorem}
\label{t2ngg}
Let $F\subseteq\p3$ be a smooth quartic surface.

There exist bundles $\cE$ of rank $2$ on $F$ whose minimal free resolution on $\p3$ is Resolution \eqref{resGeneral} with $n=6$. 

The general bundle fitting in Resolution \eqref{resGeneral} with $n=6$ is indecomposable. Decomposable bundles are exactly the direct sums $\cO_F(D)\oplus\cO_F(2h-D)$ where only one of the following occurs:
\begin{enumerate}
\item $D$ and $2h-D$ are aCM quartic curves with $p_a(D)=p_a(2h-D)=1$ which are not intersection of quadrics;
\item $D$ is an irreducible, projectively normal, quintic curve of genus $2$ and $2h-D$ is an aCM, cubic curve with $p_a(2h-D)=0$. 
\end{enumerate}
\end{theorem}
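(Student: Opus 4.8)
The plan is to mirror the structure already used in the proof of Theorem~\ref{t2gg}, adapting the construction to the case $n=6$. The existence half has essentially been carried out in the preceding discussion: starting from a twisted cubic $C\subseteq\p3$, intersecting with $F$ to obtain a length-$12$ scheme $X$, and selecting a length-$8$ subscheme $E\subseteq X$ that is aG with Hilbert function $(1,3,3,1)$ and not contained in any plane, Proposition~\ref{pExistence} produces a unique initialized, aCM bundle $\cE$ of rank $2$ with $c_1(\cE)=\cO_F(2h)$ and $c_2(\cE)=8$. The remaining point for existence is to verify that $\cE$ fits into Resolution~\eqref{resGeneral} with $n=6$ precisely (and not $n=4$ or $n=8$). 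First I would argue that $E$ is \emph{not} the complete intersection of three quadrics (since $\beta=2$ rather than $\beta=0$, as the discussion above concludes from $h^0\big(F,\cI_{E\vert F}(2)\big)=3$ matching the quadrics of $C$), which rules out the globally generated case $n=4$; and $E$ is zero-dimensional, which rules out the divisorial case $n=8$ of Theorem~\ref{t2Exotic}. By Proposition~\ref{pResolution} the only remaining possibility is $n=6$.

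Next I would treat the decomposability analysis, exactly paralleling the argument in Theorem~\ref{t2gg}. Suppose $\cE\cong\cO_F(D)\oplus\cO_F(2h-D)$. As in the earlier proof, both summands are aCM (summands of an aCM bundle) and $(2h-D)D=c_2(\cE)=8$, and Equality~\eqref{sumCohomology}-type reasoning forces $\cO_F(D)$ to be initialized with the complementary summand either initialized or without sections. I would then run through the numerical cases dictated by Proposition~\ref{pWa} subject to $(2h-D)D=8$. The key difference from the $n=4$ case is that here $\cE$ is \emph{not} globally generated (its zero-locus is zero-dimensional but $E$ is not a complete intersection of quadrics, so $\cI_{E\vert F}(2h)$ fails to be globally generated), which excludes the elliptic-quartic case that appeared for $n=4$. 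Running the arithmetic, the surviving possibilities are exactly $D^2=(2h-D)^2=0$ with $Dh=(2h-D)h=4$ yielding case (1) (two quartic curves of genus $1$ that are not intersections of quadrics, invoking Lemma~\ref{lQuartic}), and $D^2=2$, $Dh=5$ with $(2h-D)^2=-2$, $(2h-D)h=3$ yielding case (2) (a quintic of genus $2$ and a cubic of genus $0$, invoking Lemmas~\ref{lQuintic} and the cubic analysis preceding Lemma~\ref{lQuintic}).

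For the claim that the general bundle is indecomposable, I would observe that the construction depends on genuine moduli: the choice of twisted cubic $C$ and of the length-$8$ subscheme $E\subseteq X$ varies in positive-dimensional families, while the decomposable bundles are pinned to the special divisor classes $D$ enumerated in (1) and (2). On the general smooth quartic (where $\Pic(F)=\bZ\cO_F(h)$ by Noether--Lefschetz) no such $D$ exists at all, so every bundle is automatically indecomposable; on special quartics one checks that the locus producing a decomposable $\cE$ is a proper closed condition on the parameter space. Alternatively, as in Remark~\ref{rTwiceGG}, one can exhibit explicit indecomposable extensions to show indecomposable bundles genuinely occur even when $F$ does contain the relevant curves.

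The main obstacle I anticipate is the bookkeeping that pins down $n=6$ exactly and separates it cleanly from the $n=4$ and $n=8$ regimes: one must be careful that the failure of global generation of $\cE$ is equivalent to $\beta=2$ (equivalently $E$ not a complete intersection of quadrics), and that no length-$8$ aG subscheme of $X$ accidentally becomes a complete intersection or degenerates to a divisorial configuration. The decomposability case analysis is routine but must be done exhaustively, taking care that the non-global-generation of $\cE$ correctly eliminates the elliptic-quartic splitting that is permitted for $n=4$ but forbidden here.
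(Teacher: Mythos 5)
Your skeleton matches the paper's proof quite closely (existence via a degree $8$ aG subscheme of a twisted cubic section, the Buchsbaum--Eisenbud dichotomy $\beta\in\{0,2\}$, ruling out $n=4$ by failure of global generation and $n=8$ because sections of those bundles vanish on divisors, then explicit non--split extensions for indecomposability), but your decomposability analysis has a concrete gap: ``running the arithmetic'' does \emph{not} eliminate the case $D^2=4$, $hD=6$. The constraint $(2h-D)D=8$ amounts to $hD=4+D^2/2$, which is satisfied by \emph{all four} cases (a)--(d) of Proposition \ref{pWa}; and your non--global--generation criterion gives no contradiction in case (d), since there $h^0\big(F,\cO_F(2h-D)\big)=0$, so the sum is automatically not globally generated. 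Indeed $\cO_F(D)\oplus\cO_F(2h-D)$ with $D$ a projectively normal sextic of genus $3$ \emph{is} an initialized, aCM rank $2$ bundle with $c_1=2h$, $c_2=2\cdot 6-4=8$ that is not globally generated: these are exactly the decomposable exotic bundles of Theorem \ref{t2Exotic}, which exist on every determinantal quartic. The paper excludes this case by a non--numerical argument: by Lemma \ref{lSextic} (Proposition 6.2 of \cite{Bea}) the direct sum of Sequences \eqref{seqDeterminantal} and \eqref{seqDeterminantalDual} is a minimal free resolution of this bundle, namely Resolution \eqref{resGeneral} with $n=8$, which contradicts $n=6$ by uniqueness of minimal free resolutions. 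Without this step your claimed list of decomposable bundles is not established.

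A second, smaller gap sits in your converse direction. Exhibiting non--split extensions \eqref{seqSplitNGG} is the paper's route, but one must still verify that such an indecomposable extension fits Resolution \eqref{resGeneral} with $n=6$ \emph{precisely}: non--global--generation only rules out $n=4$ and leaves $n=8$ open. The paper closes this by observing (Remark \ref{0Exotic}) that a bundle with $n=8$ admits a monomorphism $\cO_F(A)\to\cE$ with $A$ a sextic, and then both the composite $\cO_F(A)\to\cO_F(2h-D)$ and the induced map to $\cO_F(D)$ vanish for degree reasons, since $(2h-D-A)h\le-2$ and $(D-A)h\le-2$ --- a contradiction. Finally, your primary suggestion for generic indecomposability (a ``proper closed condition on the parameter space'') is left unsubstantiated and is not needed once the explicit extensions are shown indecomposable and to have $n=6$; note also that ``general'' in the statement refers to bundles on a \emph{fixed} $F$, which may well be determinantal and contain all the listed curves, so the Noether--Lefschetz observation covers only part of the claim.
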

\begin{proof}
We proved above the existence of bundles $\cE$ fitting into Resolution \eqref{resGeneral} with $n=6$ starting from a scheme of degree $8$ which is not the complete intersection of three quadrics.

Assume that $\cE$ is decomposable. We can write $\cE\cong\cO_F(D)\oplus\cO_F(2h-D)$ for some divisor $D$ on $F$, hence both $\cO_F(D)$ and $\cO_F(2h-D)$ are aCM. Since $\cE$ is initialized, it follows that they are either both initialized, or one of them is initialized and the other one has no sections. Moreover $(2h-D)D=c_2(\cE)=8$, hence $hD=4+D^2/2$.

Assume that $\cO_F(D)$ is initialized. We claim that the case $D^2=4$ cannot occur. Indeed, in this case, $p_a(D)=3$ and $D$ is projectively normal (see Lemma \ref{lSextic}). Proposition 6.2 of \cite{Bea} yields the existence of Resolutions \eqref{seqDeterminantal} and \eqref{seqDeterminantalDual}.
Their direct sum, which is Resolution \eqref{resGeneral} with $n=8$, gives a minimal free resolution of $\cO_F(D)\oplus\cO_F(2h-D)$, a contradiction. 

We conclude that $D^2=-2,0,2$, hence $h^0\big(F,\cO_F(D)\big)=2+D^2/2\le 3$. We deduce that $h^0\big(F,\cO_F(2h-D)\big)\ge1$ thanks to Equality \eqref{sumCohomology}, i.e. $\cO_F(2h-D)$ is initialized too, thus it appears in the list of Proposition \ref{pWa}. Moreover $\cE$ is not globally generated: up to permuting the roles of $D$ and $2h-D$, we can assume that the same holds for $\cO_F(2h-D)$. 

Taking into account equalities $hD=4+D^2/2$, $hD+h(2h-D)=8$ and Lemmas \ref{lQuintic}, \ref{lQuartic} we obtain the list of possible cases in the statement.

In particular, if $F$ does not contain curves of the type listed above, then each bundle of rank $2$ fitting into Resolution \eqref{resGeneral} with $n=6$ (whose existence was proved above) is indecomposable.

Conversely let $F$ contain an aCM curve $D$ which is either an irreducible smooth quintic curve of genus $2$, or a cubic curve with $p_a(D)=0$, or a quartic curve with $p_a(D)=1$ which is not intersection of quadrics.

If $D$ is an irreducible smooth quintic curve of genus $2$, then $D^2=2$ (see the second Equality \eqref{h^1}) and $Dh=5$. The Hilbert polynomial of $D$ is $5t-1$, thus $D$ is contained in a unique quadric surface. It follows that $\vert 2h-D\vert\ne\emptyset$ and we trivially have  $(2h-D)h=3$ and $(2h-D)^2=-2$, i.e. $F$ contains also a cubic curve of arithmetic genus $0$. 

In Section \ref{sQuartic} (see, in particular, the proof of Lemma \ref{lQuintic}) we showed that if $F$ contains an aCM, cubic curve $D$ with $p_a(D)=0$, then $\vert 2h-D\vert$ contains also an irreducible, projectively normal, quintic curve of genus $2$.

Finally Lemma \ref{lQuartic} shows that if $F$ contains an aCM, quartic curve with $p_a(D)=1$ which is not intersection of quadrics, then $\vert 2h-D\vert$ contains a curve enjoying the same property.

In all the above cases $\cO_F(h-D)$ is not effective, because none of the above curves is contained in a plane. We also have $(D-h)h\le 1$, thus $\vert D-h\vert$ is either a line or it is empty. Since $(D-h)^2=-4$, the latter case occurs. We conclude that both $\cO_F(D)$ and $\cO_F(2h-D)$ are initialized. 

Again Equality \eqref{RRgeneral}  implies $h^1\big(F,\cO_F(2D-2h)\big)\ge6$ in each case, thus it follows the existence of non--split Sequences 
\begin{equation}
\label{seqSplitNGG}
0\longrightarrow\cO_F(D)\longrightarrow\cE\longrightarrow\cO_F(2h-D)\longrightarrow0,\qquad Dh=4,5,\ p_a(D)=1,2.
\end{equation}

Assume that $\cE$ fits into Sequence \eqref{seqSplitNGG}. If $\cE\cong \cO_F(\overline{D})\oplus\cO_F(2h-\overline{D})$ and $\overline{D}h=Dh$, then imitating word by word the corresponding part of the proof of Theorem \ref{t2gg} one checks that the sequence splits as well, a contradiction. If $\overline{D}h\ne Dh$, then each morphism $\cO_F(D)\to\cE$ is necessarily zero, because $(2h-\overline{D}-D)h=(\overline{D}-D)h=-1$, a contradiction. 

We conclude that extensions as above with an indecomposable sheaf $\cE$ in the middle always exist. Such an $\cE$ is aCM and initialized because the same is true for $\cO_F(D)$ and $\cO_F(2h-D)$: moreover it is immediate to check that $c_1(\cE)=\cO_F(2h)$ and $c_2(\cE)=8$. Finally it is not globally generated due to Theorem \ref{t2gg}. We deduce that it fits in Resolution \eqref{resGeneral} with $n$ either $6$ or $8$.

Assume that the last case occurs. Then $F$ would contain a sextic $A$ and we should have a monomorphism $\cO_F(A)\to\cE$. By composition we obtain a morphism $\cO_F(A)\to\cO_F(2h-D)$. Since $(2h-D-A)h\le-2$, it follows that such a map must be zero, thus $\cO_F(A)\to\cE$ factors through $\cO_F(D)$. Since $(D-A)h\le -2$, we conclude that this morphism must be zero, thus $\cO_F(A)\to\cE$ is zero too, a contradiction. We deduce that $\cE$ fits in Resolution \eqref{resGeneral} with $n=6$.
\end{proof}

\section{$\mu$--Stability and simple bundles}
\label{sStable}
In this section we will deal with the stability properties of an indecomposable, initialized, aCM bundle $\cE$ of rank $2$ on $F$ with $c_1(\cE)=2h$ and $c_2(\cE)=8$. We first recall some facts about the moduli spaces of simple bundles and the various notions of stability and semistability of bundles.

The slope $\mu(\cF)$ and the reduced Hilbert polynomial $p_{\cF}(t)$ of a bundle $\cF$ on $F$ are
$$
\mu(\cF)= c_1(\cF)h/\rk(\cF), \qquad p_{\cF}(t)=\chi(\cF(th))/\rk(\cF).
$$
The bundle $\cF$ is called $\mu$--semistable if for all subsheaves
$\mathcal G$ with $0<\rk(\mathcal G)<\rk(\cF)$
$$
\mu(\mathcal G) \le \mu(\cF),
$$
and $\mu$--stable if the inequality is always strict.
The bundle $\cF$ is called semistable (or, more precisely,
Gieseker--semistable) if for all $\mathcal G$ as above
$$
p_{\mathcal G}(t) \le  p_{\cF}(t),
$$
and (Gieseker) stable again if the inequality is always strict. In order to check the semistability and stability of a bundle one can restrict the attention only to subsheaves such that the quotient is torsion--free.

We have the following chain of implications for $\cF$:
$$
\text{$\cF$ is $\mu$--stable}\ \Rightarrow\ \text{$\cF$ is stable}\ \Rightarrow\ \text{$\cF$ is semistable}\ \Rightarrow\ \text{$\cF$ is $\mu$--semistable.}
$$

Let $\cE$ be a semistable vector bundle of rank $2$ with reduced Hilbert polynomial $p(t)$. The coarse moduli space $\cM_F^{ss}(p)$ parameterizing $S$--equivalence classes of semistable rank $2$ bundles on $F$ with Hilbert polynomial $p(t)$ is non--empty(see Section 1.5 of \cite{H--L} for details about $S$--equivalence of semistable sheaves). We will denote by $\cM_F^{s}(p)$ the open locus inside $\cM_F^{ss}(p)$ of stable bundles.

The scheme $\cM_F^{ss}(p)$ is the disjoint union of open and closed subsets $\cM_F^{ss}(2;c_1,c_2)$ whose points represent $S$--equivalence classes of semistable rank $2$ bundles with fixed Chern classes $c_1$ and $c_2$. Similarly $\cM_F^{s}(p)$ is the disjoint union of open and closed subsets $\cM_F^{s}(2;c_1,c_2)$.

By semicontinuity we can define open loci $\cM_F^{ss,aCM}(2;c_1,c_2)\subseteq \cM_F^{ss}(2;c_1,c_2)$ and $\cM_F^{s,aCM}(2;c_1,c_2)\subseteq \cM_F^{s}(2;c_1,c_2)$ parameterizing respectively $S$--equivalence classes of semistable and stable aCM bundles of rank $2$ on $F$ with Chern classes $c_1$ and $c_2$.

\begin{remark}
We translate below what the various notions of stability introduced above mean for indecomposable, initialized aCM bundle of rank $2$ with $c_1(\cE)=2h$ and $c_2(\cE)=8$ on our smooth quartic surface $F\subseteq\p3$.

Let $\cO_F(D)\subseteq\cE$ be such that $\cE/\cO_F(D)$ is torsion--free. Then there is a zero--dimensional scheme $Z\subseteq F$ and a divisor $D$ on $F$ such that $\cE$ fits into an exact sequence of the form
\begin{equation}
\label{seqUnstable}
0\longrightarrow \cO_F(D)\longrightarrow\cE\longrightarrow\cI_{Z\vert F}(2h-D)\longrightarrow0
\end{equation}
Notice that $\deg(Z)=c_2(\cE(-D))=D^2-2hD+8$. Moreover $\cE(h)$ is globally generated, the same is true for $\cI_{Z\vert F}(3h-D)$. Hence $Z$ is the intersection of the divisors in $\vert 3h-D\vert$ containing it. We can now start our translation. 

Notice that $\mu(\cO_F(D))=Dh$ and $\mu(\cE)=4$. Thus the bundle $\cE$ is $\mu$--semistable (resp. $\mu$--stable) if and only if for each $D$ as above $Dh\le 4$ (resp. $Dh<4$). 

Notice that the reduced Hilbert polynomials of $\cO_F(D)$ and $\cE$ are
$$
p_{\cO_F(D)}(t)=2t^2+Dht+2+\frac{D^2}2, \qquad p_{\cE}(t)=2t^2+4t+2.
$$
It follows that the bundle $\cE$ is semistable (resp. stable) if and only if for each $D$ as above either $Dh<4$, or $Dh=4$ and $D^2\le0$ (resp. $D^2\le-2$).
\end{remark}

We already know that if $\cE$ is $\mu$--stable (resp. semistable), then it is stable (resp. $\mu$--semistable). The lemma below proves that also the converse is true for the bundles we are interested in.

\begin{lemma}
\label{lStable}
Let $F\subseteq\p3$ be a smooth quartic surface and $\cE$ an indecomposable, initialized aCM bundle of rank $2$ on $F$ with $c_1(\cE)=2h$ and $c_2(\cE)=8$. 

The bundle $\cE$ is stable (resp. semistable) if and only if it is $\mu$--stable (resp. $\mu$--semistable).
\end{lemma}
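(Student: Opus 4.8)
The general chain of implications recorded just before the statement already yields $\mu$--stable $\Rightarrow$ stable and semistable $\Rightarrow$ $\mu$--semistable, so only the two reverse implications require proof. For both I would work through the dictionary set up in the previous remark: a saturated sub-line-bundle $\cO_F(D)\subseteq\cE$ produces an exact sequence \eqref{seqUnstable} with $\deg(Z)=D^2-2hD+8$, and $\cE$ is $\mu$--semistable (resp. $\mu$--stable) exactly when $hD\le4$ (resp. $hD<4$) for every such $D$, while it is semistable (resp. stable) exactly when, in addition, $D^2\le0$ (resp. $D^2\le-2$) whenever $hD=4$. The single fact that drives everything is that $Z$ is a zero-dimensional scheme, so $\deg(Z)\ge0$; hence any saturated $\cO_F(D)\subseteq\cE$ with $hD=4$ satisfies $D^2=\deg(Z)\ge0$.

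From this, the implication stable $\Rightarrow$ $\mu$--stable is immediate. A stable bundle is in particular $\mu$--semistable, so $hD\le4$ for every saturated sub-line-bundle $\cO_F(D)$; and if some $D$ had $hD=4$, then $D^2\ge0$ would contradict the stability requirement $D^2\le-2$. Thus $hD<4$ for every such $D$, which is precisely $\mu$--stability.

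The substantive direction is $\mu$--semistable $\Rightarrow$ semistable. Assuming $\cE$ is $\mu$--semistable but not semistable, there is a saturated $\cO_F(D)\subseteq\cE$ with $hD=4$ and $D^2>0$. The Hodge index theorem gives $(hD)^2\ge h^2D^2$, i.e. $D^2\le4$, and $D^2$ is even on the $K3$ surface $F$, so $D^2\in\{2,4\}$; I would rule out both. If $D^2=4$, equality in Hodge index forces $D$ to be numerically (hence, on a $K3$, linearly) equivalent to $h$, so $\cO_F(D)\cong\cO_F(h)$ and the inclusion $\cO_F(h)\hookrightarrow\cE$ yields a nonzero section of $\cE(-h)$, against $\cE$ being initialized.

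The case $D^2=2$ is the main obstacle. Here Riemann--Roch \eqref{RRgeneral} gives $h^0\big(F,\cO_F(D)\big)\ge\chi(\cO_F(D))=2+D^2/2=3$, the term $h^2$ vanishing because $-D$ is not effective ($hD>0$); hence $D$ is effective. Writing $D=\Phi+M$ with $\Phi$ the fixed part and $|M|$ free, I would invoke Watanabe's bound $hD'\ge3$ for nonzero effective $D'$ with $(D')^2\ge0$ together with Proposition \ref{SD}. If $|D|$ has no fixed part, its general member is a smooth irreducible curve of degree $hD=4$ and genus $1+D^2/2=2$, which cannot exist in $\p3$ (a nondegenerate quartic curve has genus $\le1$, while a plane quartic has genus $3$). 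If $\Phi\ne0$, then $h\Phi\ge1$ and $hM\ge3$ (the latter from $M^2\ge0$) force $h\Phi=1$ and $hM=3$; since $h^0\big(F,\cO_F(M)\big)=h^0\big(F,\cO_F(D)\big)\ge3$ rules out $M^2=0$, Hodge index then gives $M^2=2$, and the general member of $|M|$ would be a smooth irreducible curve of degree $3$ and genus $2$, again impossible. This contradiction shows $D^2\le0$ whenever $hD=4$, so $\cE$ is semistable, and the equivalences follow.
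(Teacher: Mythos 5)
Your proposal is correct, and while your first half agrees with the paper, the substantive direction is argued by a genuinely different route. For stable $\Rightarrow$ $\mu$--stable you and the paper do the same thing: when $hD=4$, the degree count $\deg(Z)=D^2-2hD+8=D^2\ge0$ contradicts the stability requirement $D^2\le-2$ (the paper phrases it as the two cases $Z=\emptyset$, giving $D^2=0$, and $Z\ne\emptyset$, giving $D^2\ge2$). For $\mu$--semistable $\Rightarrow$ semistable the paper never leaves Sequence \eqref{seqUnstable}: since $\cE(h)$ is globally generated (recorded in the remark before the lemma), $Z$ is cut out by divisors in $\vert 3h-D\vert$, so $h^0\big(F,\cI_{Z\vert F}(h-D)\big)\ge1$ would force $\deg(Z)=D^2\le(h-D)(3h-D)=D^2-4$, absurd; then the initialized and aCM hypotheses give $h^0\big(F,\cO_F(D-h)\big)=h^1\big(F,\cO_F(D-h)\big)=0$, non--effectivity of $\pm(D-h)$ gives $h^2\big(F,\cO_F(D-h)\big)=0$, and Riemann--Roch \eqref{RRgeneral} yields $\chi(\cO_F(D-h))=D^2/2=0$, contradicting $D^2\ge2$ --- a uniform Euler--characteristic computation with no case division. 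You instead work purely with the numerical class of $D$: Hodge index pins $D^2\in\{2,4\}$; the equality case $D^2=4$ gives $D\sim h$ (legitimately, since $\Pic(F)\cong\Num(F)$ on a $K3$, as the paper itself notes in the proof of Theorem \ref{tStable}), contradicting initializedness; and $D^2=2$ is excluded via the fixed--plus--moving decomposition, Proposition \ref{SD}, Watanabe's bound $hD'\ge3$, and the classical nonexistence of smooth genus--$2$ quartic and cubic curves in $\p3$. What each buys: the paper's argument is shorter and exploits the bundle--theoretic structure (the scheme $Z$ and global generation of $\cE(h)$), whereas yours never uses $Z$ in this direction and would survive in settings where that global generation is unavailable, at the price of a case analysis and external facts about space curves. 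One compressed step deserves a word: ruling out $M^2=0$ requires Proposition \ref{SD}(3) to write $\cO_F(M)\cong\cO_F(e\overline{D})$ and Watanabe's bound $h\overline{D}\ge3$ to force $e=1$, whence $h^0\big(F,\cO_F(M)\big)=2<3$; the ingredients you cite do suffice, but the deduction should be spelled out.
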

\begin{proof}
It remains to prove that if $\cE$ is stable (resp. $\mu$--semistable), then it is $\mu$--stable (resp. semistable).

The bundle $\cE$ is stable, but not $\mu$--stable if and only if it fits in Sequence \eqref{seqUnstable} with $D$ such that $Dh=4$ and $D^2\le-2$. The bundle $\cE$ is $\mu$--semistable, but not semistable if and only if it fits in Sequence \eqref{seqUnstable} with $D$ such that $Dh=4$ and $D^2\ge2$. Thus we will assume that  $Dh=4$ and $D^2\ne0$ from now on. 

If $Z=\emptyset$, then $8=c_2(\cE)=8-D^2$, hence $D^2=0$, a contradiction.

We have to examine the case $Z\ne\emptyset$. We have $D^2=D^2-2hD+8=\deg(Z)\ge1$, hence $D^2\ge2$. It follows that $\cE$ cannot be stable in this case. 

Let $\cE$ be $\mu$--semistable, but not semistable, so that $D^2\ge2$. If $h^0\big(F,\cI_{Z\vert F}(h-D)\big)\ge1$, then $Z$ would be contained in a divisor in $\vert h-D\vert$. Thus $D^2-2hD+8\le (h-D)(3h-D)=D^2-4$, whence $hD\ge6$, a contradiction. Thus we can assume $h^0\big(F,\cI_{Z\vert F}(h-D)\big)=0$.

The bundle $\cE$ is initialized and aCM, thus the Cohomology of Sequence \eqref{seqUnstable} gives
$$
h^1\big(F,\cO_F(D-h)\big)=h^0\big(F,\cO_F(D-h)\big)=0.
$$
It follows that $D-h$ is not effective. Since $(D-h)h=0$, it follows that $\cO_F(D-h)\ne\cO_F$, hence $h-D$ is not effective too. In particular
$$
h^2\big(F,\cO_F(D-h)\big)=h^0\big(F,\cO_F(h-D)\big)=0.
$$
Equality \eqref{RRgeneral} for $\cO_F(D-h)$ implies $D^2=0$, a contradiction. 
\end{proof}

If $\cE$ fits into Sequences either \eqref{seqSplitExotic}, or  \eqref{seqSplitNGG} with $Dh=5$, then $\cE$ is trivially not $\mu$--semistable. 

Assume that $\cE$ fits into Sequence either \eqref{seqSplitGG}, or \eqref{seqSplitNGG} with $Dh=4$. The bundle $\cE$ is certainly not $\mu$--stable. If it is not $\mu$--semistable, then there is a divisor $A$ on $F$ with $Ah\ge 5$ and an injective morphism $\psi\colon\cO_F(A)\to\cE$. By composition we obtain a morphism $\cO_F(A)\to\cO_F(D)$ which is zero because $(D-A)h\le -1$. Thus $\psi$ factors through a morphism $\cO_F(A)\to\cO_F(2h-D)$ which is again zero because $(2h-D-A)h\le -1$. From the contradiction we deduce that such an $A$ cannot exist. We conclude that $\cE$ is $\mu$--semistable, hence actually semistable due to Lemma \ref{lStable}

The above discussion proves the \lq if\rq\ part of the two assertions of the following statement.

\begin{proposition}
\label{pStable}
Let $F\subseteq\p3$ be a smooth quartic surface and $\cE$ an indecomposable, initialized aCM bundle of rank $2$ on $F$ with $c_1(\cE)=2h$ and $c_2(\cE)=8$. 

The bundle $\cE$ is strictly semistable (resp. unstable) if and only if it fits into Sequence either \eqref{seqSplitGG}, or \eqref{seqSplitNGG} with $Dh=4$ (resp.  either \eqref{seqSplitNGG} with $Dh=5$, or  \eqref{seqSplitExotic}).
\end{proposition}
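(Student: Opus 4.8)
The plan is to prove only the \lq only if\rq\ direction, since the discussion preceding the statement already settles the converse. Throughout I use the dictionary recorded in the Remark before Lemma \ref{lStable}: by Lemma \ref{lStable} semistability coincides with $\mu$--semistability and stability with $\mu$--stability, so $\cE$ is unstable exactly when it admits a saturated sub--line--bundle $\cO_F(D)\subseteq\cE$ with $Dh\ge5$, while $\cE$ is strictly semistable exactly when the largest such value of $Dh$ equals $4$. In either case the maximal destabilizing subsheaf is a line bundle $\cO_F(D)$ with torsion--free quotient, so $\cE$ sits in Sequence \eqref{seqUnstable} for a zero--dimensional $Z\subseteq F$ with $\deg(Z)=D^2-2hD+8$. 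My goal is to show that $Z=\emptyset$ and to compute $D^2$; the resulting extension $0\to\cO_F(D)\to\cE\to\cO_F(2h-D)\to0$ is then forced to be one of \eqref{seqSplitExotic}, \eqref{seqSplitGG}, \eqref{seqSplitNGG} by its numerical data $Dh$ and $p_a(D)=1+D^2/2$, and it is non--split because $\cE$ is indecomposable.

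First I treat the strictly semistable case, where $Dh=4$ and hence $\deg(Z)=D^2$. Since $\cE$ is semistable, the criterion in the Remark forces $D^2\le0$, while $\deg(Z)\ge0$ gives $D^2\ge0$. Thus $D^2=0$, $Z=\emptyset$ and $p_a(D)=1$; moreover $D$ is effective since $h^2\big(F,\cO_F(D)\big)=h^0\big(F,\cO_F(-D)\big)=0$ makes $h^0\big(F,\cO_F(D)\big)\ge\chi(\cO_F(D))=2$. Hence $\cE$ fits into \eqref{seqSplitGG} or into \eqref{seqSplitNGG} with $Dh=4$, according to the type of the quartic $D$ supplied by Lemma \ref{lQuartic}.

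Next I handle the unstable case, with $Dh=a\ge5$, where the aCM and initialized hypotheses are decisive. Twisting \eqref{seqUnstable} by $\cO_F(-h)$ and using $h^0\big(F,\cE(-h)\big)=0$ gives $h^0\big(F,\cO_F(D-h)\big)=0$; using $h^1\big(F,\cE(-h)\big)=0$ together with $h^0\big(F,\cO_F(h-D)\big)=0$, which holds because $(h-D)h=4-a<0$, gives $h^1\big(F,\cO_F(D-h)\big)=0$, and $h^2\big(F,\cO_F(D-h)\big)=h^0\big(F,\cO_F(h-D)\big)=0$ as well. Hence $\chi(\cO_F(D-h))=0$, and Equality \eqref{RRgeneral} yields $(D-h)^2=-4$, that is $D^2=2a-8$ and therefore $\deg(Z)=0$, so $Z=\emptyset$. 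To bound $a$ I note that $H^0\big(F,\cO_F(D)\big)$ injects into $H^0\big(F,\cE\big)$, which has dimension $4$, whereas $h^2\big(F,\cO_F(D)\big)=h^0\big(F,\cO_F(-D)\big)=0$ forces $h^0\big(F,\cO_F(D)\big)\ge\chi(\cO_F(D))=2+D^2/2$ by \eqref{RRgeneral}; combining these gives $D^2\le4$, i.e. $a\le6$. Thus $a\in\{5,6\}$: when $a=5$ one has $D^2=2$, $p_a(D)=2$ and $\cE$ fits into \eqref{seqSplitNGG} with $Dh=5$, while when $a=6$ one has $D^2=4$, $p_a(D)=3$ and $\cE$ fits into \eqref{seqSplitExotic}. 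In the latter case $h^0\big(F,\cO_F(D)\big)=4$ and $h^1\big(F,\cO_F(D)\big)=0$, so the cohomology of the extension forces $h^0\big(F,\cO_F(2h-D)\big)=0$ (equivalently $D$ does not lie on a quadric), matching exactly the hypotheses on $D$ in Theorem \ref{t2Exotic}.

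The main obstacle is the unstable case, and more precisely the two numerical reductions just described: showing $Z=\emptyset$, equivalently $D^2=2a-8$, which is where the aCM and initialized hypotheses enter through the vanishing of $\chi(\cO_F(D-h))$; and the bound $a\le6$, which rests on the injection $H^0\big(F,\cO_F(D)\big)\hookrightarrow H^0\big(F,\cE\big)$ and Riemann--Roch. Once both facts are in place, identifying the extension with the correct sequence among \eqref{seqSplitExotic}, \eqref{seqSplitGG}, \eqref{seqSplitNGG} is purely a matter of reading off $Dh$ and $p_a(D)$.
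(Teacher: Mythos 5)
Your proof is correct, and while it rests on the same skeleton as the paper's --- reduce to Sequence \eqref{seqUnstable} via Lemma \ref{lStable} and the dictionary in the preceding Remark, then pin down $Dh$, $D^2$ and $Z$ using the aCM and initialized hypotheses plus Riemann--Roch --- your handling of the unstable case is genuinely reorganized, and more efficient. The paper splits that case into $Z\ne\emptyset$ and $Z=\emptyset$: in the first subcase it first uses $h^0\big(F,\cE\big)=4$ to force $Dh=5$, $D^2=4$, and only then computes $\chi\big(\cO_F(D-h)\big)=1$ to contradict initializedness. You observe instead that the three vanishings $h^i\big(F,\cO_F(D-h)\big)=0$, $i=0,1,2$, hold for \emph{any} $Z$ once $Dh\ge5$ (they need only $h^0\big(F,\cE(-h)\big)=h^1\big(F,\cE(-h)\big)=0$ and $(h-D)h=4-Dh<0$), so Equality \eqref{RRgeneral} gives $(D-h)^2=-4$ and hence $\deg(Z)=D^2-2Dh+8=0$ unconditionally; the case division on $Z$ disappears, and the bound $h^0\big(F,\cO_F(D)\big)\le h^0\big(F,\cE\big)=4$ is invoked only afterwards to get $D^2\le4$, i.e. $Dh\in\{5,6\}$, exactly as in the paper's $Z=\emptyset$ subcase. (This is the trick the paper itself uses in Lemma \ref{lStable} for $Dh=4$, pushed to $Dh\ge5$.) Similarly, in the strictly semistable case you replace the paper's instruction to repeat the proof of Lemma \ref{lStable} verbatim by the one-line observation that Gieseker semistability forces $D^2\le0$ while $\deg(Z)=D^2\ge0$. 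A further small plus: for $Dh=6$ you deduce $h^0\big(F,\cO_F(2h-D)\big)=0$ explicitly from $h^0\big(F,\cO_F(D)\big)=4=h^0\big(F,\cE\big)$ and $h^1\big(F,\cO_F(D)\big)=0$, thereby checking the extra condition in case (d) of Proposition \ref{pWa} (and the hypothesis of Theorem \ref{t2Exotic}) that the paper's brief appeal to Proposition \ref{pWa} leaves implicit. Net effect: same ingredients, fewer cases, and a slightly more complete identification of the resulting sequences.
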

\begin{proof}
It remains to prove the \lq only if\rq\ part of the statement. In view of Lemma \ref{lStable} it suffices to prove the statement with $\mu$--semistable instead of semistable.
 
If $\cE$ is strictly $\mu$--semistable, then it fits in Sequence \eqref{seqUnstable} with $Dh=4$. By repeating verbatim the proof of Lemma \ref{lStable} one obtains $D^2=0$ and $Z=\emptyset$. Hence the statement is proved in this case.

If $\cE$ is strictly unstable, then it fits in Sequence \eqref{seqUnstable} with $Dh\ge5$. We first assume that $Z\ne\emptyset$. Since $D^2$ is even and $D^2-2hD+8=\deg(Z)\ge1$, it follows that $D^2\ge2hD-6\ge4$. We have  $h^2\big(F,\cO_F(D)\big)=h^0\big(F,\cO_F(-D)\big)=0$, because $-hD\le-5$, thus 
$$
h^0\big(F,\cE\big)-h^1\big(F,\cO_F(D)\big)\ge h^0\big(F,\cO_F(D)\big)-h^1\big(F,\cO_F(D)\big)=2+\frac{D^2}2\ge4.
$$
Taking into account that $h^0\big(F,\cE\big)=4$ we deduce that $h^0\big(F,\cO_F(D)\big)=4=D^2$ and $hD=5$. The bundle $\cE$ is initialized and aCM, thus the Cohomology of Sequence \eqref{seqUnstable} gives
$$
h^1\big(F,\cO_F(D-h)\big)=h^0\big(F,\cI_{Z\vert F}(h-D)\big)\le h^0\big(F,\cO_F(h-D)\big)=0,
$$
because $(h-D)h\le-1$. For the same reason $h^2\big(F,\cO_F(D-h)\big)=h^0\big(F,\cO_F(h-D)\big)=0$, hence Equality \eqref{RRgeneral} gives $1= h^0\big(F,\cO_F(D-h)\big)\le h^0\big(F,\cE(-h)\big)$ contradicting the hypothesis that $\cE$ is initialized.

Finally let $Z=\emptyset$. In this case $8=c_2(\cE)=2hD-D^2$. As above we have $h^2\big(F,\cO_F(D)\big)=0$ and $4=h^0\big(F,\cE\big)\ge h^0\big(F,\cO_F(D)\big)$, thus $4\ge 2+{D^2}/2$ or, equivalently $D^2\le4$. It follows that either $D^2=2$ and $Dh=5$, or $D^2=4$ and $Dh=6$. Since $0=h^0\big(F,\cE(-h)\big)\ge h^0\big(F,\cO_F(D-h)\big)$, Proposition \ref{pWa} implies that $\cO_F(D)$ is aCM, hence Sequence \eqref{seqUnstable} coincides with Sequence either \eqref{seqSplitNGG} with $Dh=5$, or \eqref{seqSplitExotic}.
\end{proof}

\begin{corollary}
\label{cSimple}
Let $F\subseteq\p3$ be a smooth quartic surface. 

If $\cE$ is an indecomposable, initialized aCM bundle of rank $2$ on $F$ with $c_1(\cE)=2h$ and $c_2(\cE)=8$, then $\cE$ is simple.
\end{corollary}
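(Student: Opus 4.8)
The plan is to split according to stability type, using the classification of non-stable bundles already obtained in Proposition \ref{pStable}. If $\cE$ is stable it is automatically simple, since a stable sheaf on a smooth projective variety over an algebraically closed field satisfies $\operatorname{End}(\cE)=k$ (see \cite{H--L}, Corollary 1.2.8). Hence the whole content of the statement concerns the bundles that are \emph{not} stable, and Proposition \ref{pStable} describes these precisely: such an $\cE$ is strictly semistable or unstable, so it fits into one of the non-split sequences \eqref{seqSplitExotic}, \eqref{seqSplitGG}, \eqref{seqSplitNGG}, that is into a non-split extension
\[
0\longrightarrow\cO_F(D)\longrightarrow\cE\longrightarrow\cO_F(2h-D)\longrightarrow0,\qquad(\star)
\]
with $Dh\in\{4,5,6\}$ and, correspondingly, $D^2=2Dh-8\in\{0,2,4\}$.

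For such an $\cE$ I would prove $\operatorname{End}(\cE)=k$ directly from $(\star)$. The argument rests on the two vanishings
\[
H^0\big(F,\cO_F(2h-2D)\big)=H^0\big(F,\cO_F(2D-2h)\big)=0,
\]
equivalently $\operatorname{Hom}\big(\cO_F(D),\cO_F(2h-D)\big)=\operatorname{Hom}\big(\cO_F(2h-D),\cO_F(D)\big)=0$. Granting them, let $\phi\in\operatorname{End}(\cE)$. The composite $\cO_F(D)\hookrightarrow\cE\stackrel{\phi}{\to}\cE\twoheadrightarrow\cO_F(2h-D)$ lies in $\operatorname{Hom}\big(\cO_F(D),\cO_F(2h-D)\big)=0$, so $\phi$ preserves the sub-line-bundle $\cO_F(D)$ and acts there by a scalar $\lambda\in\operatorname{End}\big(\cO_F(D)\big)=H^0\big(F,\cO_F\big)=k$. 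Then $\psi:=\phi-\lambda\,\mathrm{id}_\cE$ kills $\cO_F(D)$, hence factors as $\cE\twoheadrightarrow\cO_F(2h-D)\stackrel{g}{\to}\cE$. Applying $\operatorname{Hom}\big(\cO_F(2h-D),-\big)$ to $(\star)$ and using $\operatorname{Hom}\big(\cO_F(2h-D),\cO_F(D)\big)=0$ together with the fact that the connecting map sends $\mathrm{id}_{\cO_F(2h-D)}$ to the class of $(\star)$ in $\ext^1\big(\cO_F(2h-D),\cO_F(D)\big)$---non-zero because $\cE$ is indecomposable---yields $\operatorname{Hom}\big(\cO_F(2h-D),\cE\big)=0$. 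Therefore $g=0$, so $\psi=0$ and $\phi=\lambda\,\mathrm{id}$; thus $\cE$ is simple.

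It remains to establish the two vanishings, which is the only real computation. Since $h$ is ample, $(2h-2D)h=2(4-Dh)\le0$ gives $H^0\big(F,\cO_F(2h-2D)\big)=0$ at once when $Dh\in\{5,6\}$; when $Dh=4$ the class $2h-2D$ has zero degree against $h$, and effectivity would force $2h\sim2D$, hence $16=(2h)^2=(2D)^2=4D^2=0$, absurd---the same remark disposes of $H^0\big(F,\cO_F(2D-2h)\big)$ for $Dh=4$. The genuinely delicate vanishing is $H^0\big(F,\cO_F(2D-2h)\big)=0$ for $Dh=5,6$, where $(2D-2h)h>0$ so it cannot be read off the intersection with $h$. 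Here I would repeat the argument used in the proof of Theorem \ref{t2Exotic}: in these two cases $D$ is an irreducible curve (a quintic, resp. a sextic, by Lemmas \ref{lQuintic} and \ref{lSextic}) and $(2D-2h)D=2D^2-2Dh<0$, so any effective divisor linearly equivalent to $2D-2h$ must contain $D$ as a component, leaving an effective residual divisor in $\vert D-2h\vert$; but $(D-2h)h<0$, a contradiction. This last vanishing is the main obstacle, precisely because it requires the irreducibility of $D$ rather than a mere numerical estimate.
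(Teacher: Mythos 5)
Your proposal is correct and follows essentially the same route as the paper: stable bundles are simple by \cite{H--L}, Corollary 1.2.8; non-stable ones are reduced via Proposition \ref{pStable} to a non-split extension of $\cO_F(2h-D)$ by $\cO_F(D)$; and the two key vanishings $h^0\big(F,\cO_F(2h-2D)\big)=h^0\big(F,\cO_F(2D-2h)\big)=0$ are established by the same numerical and irreducibility arguments (the paper handles $Dh=4$ via Remark \ref{rTwiceGG}, which is equivalent to your $2h\sim 2D$ computation). The only difference is cosmetic: where you run the standard endomorphism chase (an endomorphism preserves $\cO_F(D)$, subtract a scalar, and use that the connecting map sends $\mathrm{id}$ to the non-zero extension class), the paper packages the identical ingredients as the cohomology of Sequence \eqref{seqSplit} twisted by $\cE^\vee\cong\cE(-2h)$, obtaining $h^0\big(F,\cE\otimes\cE^\vee\big)=1$ from $h^0\big(F,\cE(-D)\big)=1$ and $h^0\big(F,\cE(D-2h)\big)=0$.
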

\begin{proof}
If $\cE$ is stable, then it is simple (see \cite{H--L}, Corollary 1.2.8).

Assume that $\cE$ is not stable. Due to Proposition \ref{pStable}, the bundle $\cE$ fits into
\begin{equation}
\label{seqSplit}
0\longrightarrow\cO_F(D)\longrightarrow\cE\longrightarrow\cO_F(2h-D)\longrightarrow0
\end{equation}
where $Dh=4,5,6$ and $p_a(D)=Dh-3$ (which is equivalent to $D^2=2Dh-8$ using Equality \eqref{h^1}).

Notice that $(2h-2D)h<0$ when $Dh=5,6$, hence $h^0\big(F,\cO_F(2h-2D)\big)=0$ in these cases. The same vanishing holds also for $Dh=4$ as pointed out in Remark \ref{rTwiceGG}. 
The cohomology of Sequence \eqref{seqSplit}, twisted by $\cO_F(-D)$, thus implies $h^0\big(F,\cE(-D)\big)=1$. 

Assume that Sequence \eqref{seqSplit} does not split. The boundary map
$$
H^0\big(F,\cO_F\big)\longrightarrow \ext^1_F\big(\cO_F(2h-D),\cO_F(D)\big)\cong H^1\big(F,\cO_F(2D-2h)\big)
$$
is injective. Thus if we now consider the cohomology of Sequence \eqref{seqSplit} twisted by $\cO_F(D-2h)$, it follows that $h^0\big(F,\cE(D-2h)\big)=h^0\big(F,\cO_F(2D-2h)\big)$.

We have $(2D-2h)D\le -4$. Hence each effective divisor in $\vert 2D-2h\vert$ splits as the sum of $D$ and an effective divisor in $\vert D-2h\vert$. The inequality $(D-2h)h\le -2$ implies that such a divisor cannot exist. We conclude that $h^0\big(F,\cO_F(2D-2h)\big)=0$.

Twisting Sequence \eqref{seqSplit} by $\cE^\vee\cong\cE(-2h)$ and taking its cohomology we finally deduce 
$h^0\big(F,\cE\otimes\cE^\vee\big)=1$.
\end{proof}

Notice that indecomposable bundles fitting into Sequences \eqref{seqSplitGG}, \eqref{seqSplitNGG}, \eqref{seqSplitExotic}, form families isomorphic to $\p5$. We proved this fact in the last case of Theorem \ref{t2Exotic}. The same proof holds also for bundles fitting in Sequences \eqref{seqSplitNGG} and \eqref{seqSplitGG} (in this last case we also need the vanishing $h^0\big(F,\cO_F(2h-2D)\big)=h^0\big(F,\cO_F(2D-2h)\big)=0$ proved in Remark \ref{rTwiceGG}).

\begin{theorem}
\label{tStable}
Let $F\subseteq\p3$ be a smooth quartic surface.

The moduli space $\cM_F^{s,aCM}(2;2h,8)$ is non--empty, smooth and of dimension $10$. The points in $\cM_F^{ss,aCM}(2;2h,8)\setminus\cM_F^{s,aCM}(2;2h,8)$ are in one--to--one correspondence with the set of unordered pairs $\{\ \cO_F(D),\cO_F(2h-D)\ \}$ where $D$ is a quartic curve  with arithmetic genus $1$.
\end{theorem}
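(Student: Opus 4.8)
The three assertions---smoothness of dimension $10$, non-emptiness, and the description of the boundary---I would establish in turn, using Mukai's deformation theory for the first, the constructions of Sections \ref{sConstruction2gg} for the second, and Proposition \ref{pStable} for the third. For smoothness and dimension, fix a stable, hence simple, point $[\cE]$ of $\cM_F^{s}(2;2h,8)$. Since $F$ is a $K3$ surface, $\omega_F\cong\cO_F$, so by Serre duality the obstruction space $\ext^2_0(\cE,\cE)=\ker\big(\mathrm{tr}\colon\ext^2(\cE,\cE)\to H^2(F,\cO_F)\big)$ is dual to the trace-free part of $\mathrm{Hom}(\cE,\cE)$, which vanishes because $\cE$ is simple (the trace of the identity is $\rk(\cE)=2\neq0$). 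Hence deformations are unobstructed and $\cM_F^{s}(2;2h,8)$ is smooth at $[\cE]$ with tangent space $\ext^1(\cE,\cE)$. To compute its dimension I note $\dim_k\mathrm{Hom}(\cE,\cE)=\dim_k\ext^2(\cE,\cE)=1$ by simplicity and Serre duality, while $\cE^\vee\cong\cE(-2h)$ shows that $\cE^\vee\otimes\cE$ has rank $4$, $c_1=0$ and $c_2=4c_2(\cE)-c_1(\cE)^2=16$; thus Equality \eqref{RRgeneral} gives $\chi(\cE,\cE)=\chi(\cE^\vee\otimes\cE)=8-16=-8$, whence $\dim_k\ext^1(\cE,\cE)=2-\chi(\cE,\cE)=10$. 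As being aCM is an open condition, $\cM_F^{s,aCM}(2;2h,8)$ is open in $\cM_F^{s}(2;2h,8)$ and therefore smooth of dimension $10$ wherever it is non-empty.

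For non-emptiness I would split according to $\Pic(F)$. When $\Pic(F)=\bZ\cdot\cO_F(h)$, which holds for the very general $F$ by Noether--Lefschetz, every divisor satisfies $D^2\in4\bZ$, so no $D$ with $Dh=4$, $D^2=0$ exists; by Proposition \ref{pStable} this excludes every strictly semistable and every unstable member, so the indecomposable bundle produced in Theorem \ref{t2gg} is automatically stable. For an arbitrary $F$ I would instead use the construction of Section \ref{sConstruction2gg}: by Proposition \ref{pExistence} the bundles fitting in Resolution \eqref{resGeneral} with $n=4$ correspond to aG schemes $E\subseteq F$ of degree $8$ which are complete intersections of three quadrics, a family of dimension $10$. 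Such a member fails to be stable exactly when its destabilizing extension \eqref{seqSplitGG} or \eqref{seqSplitNGG} (with $Dh=4$) forces the vanishing scheme $E$ to lie on a quartic curve of genus $1$ contained in $F$; for each of the finitely many such divisor classes these bundles fill only the corresponding $\p5$. Hence the generic $n=4$ bundle is stable and $\cM_F^{s,aCM}(2;2h,8)\neq\emptyset$ for every $F$.

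To describe $\cM_F^{ss,aCM}(2;2h,8)\setminus\cM_F^{s,aCM}(2;2h,8)$ I would analyse the polystable representative of a strictly semistable aCM bundle $\cE$. Its Jordan--Hölder factors are stable line bundles of slope $\mu(\cE)=4$ and reduced Hilbert polynomial $p_{\cE}$; writing such a factor as $\cO_F(D)$, comparison of $p_{\cO_F(D)}(t)=2t^2+Dh\,t+2+D^2/2$ with $p_{\cE}(t)=2t^2+4t+2$ forces $Dh=4$ and $D^2=0$, so by \eqref{h^1} $D$ is a quartic curve with $p_a(D)=1$. The complementary factor is then $\cO_F(2h-D)$, again a quartic curve of arithmetic genus $1$ since $(2h-D)h=4$ and $(2h-D)^2=0$. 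Thus the $S$-equivalence class of $\cE$ is the class of $\cO_F(D)\oplus\cO_F(2h-D)$, hence is determined by the unordered pair $\{\cO_F(D),\cO_F(2h-D)\}$. Conversely, Proposition \ref{pWa}(b) shows both line bundles are aCM, so their direct sum is a strictly semistable aCM bundle realizing the pair; and since $(2D-2h)^2=-16\neq0$ we have $\cO_F(D)\not\cong\cO_F(2h-D)$, so distinct pairs give distinct classes. This yields the asserted bijection.

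The step I expect to be the genuine obstacle is the non-emptiness for special $F$: one must check carefully that the family of complete-intersection schemes $E\subseteq F$ really has dimension $10$ and that the destabilizing subloci are proper of dimension $5$, so that a generic, stable $n=4$ bundle truly exists. This is the only place where the concrete geometry of $F$, rather than the formal deformation theory, enters; everything else reduces to Corollary \ref{cSimple}, Proposition \ref{pStable}, and the Riemann--Roch computation above.
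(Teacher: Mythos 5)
Your opening paragraph (Mukai deformation theory plus the Riemann--Roch computation $\chi(\cE^\vee\otimes\cE)=-8$, giving smoothness and dimension $10$) and your closing paragraph (the Jordan--H\"older analysis of the strictly semistable boundary) are correct and essentially coincide with the paper's proof, which cites \cite{Mu} for the first point and uses Proposition \ref{pStable} together with $\gr(\cE)\cong\cO_F(D)\oplus\cO_F(2h-D)$ for the last. The genuine gap -- which you flag yourself -- is the non-emptiness of $\cM_F^{s,aCM}(2;2h,8)$ for an arbitrary smooth quartic, and your sketch of it contains two concrete defects. First, you assert without proof that the family of aG schemes $E\subseteq F$ of degree $8$ which are complete intersections of three quadrics has dimension $10$; this is precisely the hard step and nothing in the paper supplies it. Second, your claim that there are only \emph{finitely} many divisor classes $D$ with $Dh=4$, $D^2=0$ is unjustified and false in general: on a quartic of Picard rank at least $3$ the set of solutions of these two equations in the lattice $\Pic(F)\cong\NS(F)$ can be infinite, so a finiteness argument cannot close the proof. (A minor further point: in your boundary analysis the second Jordan--H\"older factor is a priori of the form $\cI_{Z\vert F}(2h-D)$, and you should note that equality of Hilbert polynomials forces $Z=\emptyset$; alternatively just invoke Proposition \ref{pStable}.)

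The paper's argument avoids both defects by working in the moduli space $\Spl_F^{aCM}(2;2h,8)$ of \emph{simple} aCM bundles rather than with families of zero-dimensional schemes. The key observation you are missing is Corollary \ref{cSimple}: every indecomposable, initialized, aCM bundle with these invariants is simple, \emph{including the strictly semistable and unstable ones}. Hence the bundles constructed in Theorems \ref{t2gg}, \ref{t2ngg} and \ref{t2Exotic} already show $\Spl_F^{aCM}(2;2h,8)\ne\emptyset$ for every $F$, and by Mukai this space is smooth of dimension $10$ at \emph{every} point, stable or not -- so no dimension count of the family of schemes $E$ is needed. Every non-stable point fits, by Proposition \ref{pStable}, into Sequence \eqref{seqSplit} for some $D$ with $Dh=4,5,6$ and $D^2=2Dh-8$, hence lies in the image $X_D$ of a $\p5$ of extension classes; since $\Pic(F)$ is a finitely generated free abelian group, there are only \emph{countably} many such $X_D$, and a countable union of subvarieties of dimension at most $5$ cannot exhaust the $10$-dimensional space $\Spl_F^{aCM}(2;2h,8)$. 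Countability is all the paper uses, and all that is available; your route would only become complete if you actually proved the $10$-dimensionality of the complete-intersection family and replaced ``finitely many'' by this countability argument, at which point you would have reconstructed the paper's proof in a more laborious form.
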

\begin{proof}
It is possible to define the moduli space $\Spl_F(2;2h,8)$ of rank $2$ simple bundles $\cE$ on $F$ with $c_1(\cE)=2h$ and $c_2(\cE)=8$. If $\Spl_F(2;2h,8)$ is non--empty, then it is smooth and it has dimension $10$ (e.g see  \cite{Mu}).

By semicontinuity we can define the open locus $\Spl_F^{aCM}(2;2h,8)\subseteq \Spl_F(2;2h,8)$ of aCM bundles. Corollary \ref{cSimple} implies that each rank $2$, aCM bundle $\cE$ on $F$ with $c_1(\cE)=2h$ and $c_2(\cE)=8$ represents a point inside $\Spl_F^{aCM}(2;2h,8)$.

Assume that $\cE$ represents a point in $\Spl_F^{aCM}(2;2h,8)$ which is not stable. We know that it fits into Sequence \eqref{seqSplit} for some divisor $D$ with $Dh=4,5,6$ and $D^2=2Dh-8$. Moreover $\cE$ varies in a flat family $\frak E_D$ over $\p5$ due to the discussion above. In particular we have a morphism $\p5\to \Spl_F^{aCM}(2;2h,8)$ depending on $D$ and we denote by $X_D$ its image. 

We have an inclusion
$$
\bigcup_{D}X_D\subseteq  \Spl_F^{aCM}(2;2h,8).
$$
Recall that the natural quotient map $\Pic(F)\to\Num(F)$ factors through $\NS(F)$, thus it induces an isomorphism $\Pic(F)\cong\NS(F)$ (see \cite{SD}, Paragraph (2.3)). It follows that $\Pic(F)$ is a finitely generated free abelian group, thus it is countable: in particular the set of isomorphism classes of line bundles $\cO_F(D)$, where $D$ is as above, is countable as well.

We deduce that the first member of the above inclusion is a countable union of proper subvarieties of $\Spl_F^{aCM}(2;2h,8)$, hence the inclusion above is strict. It follows that there are stable bundles inside $\Spl_F^{aCM}(2;2h,8)$. Since this moduli space contains $\cM_F^{s,aCM}(2;2h,8)$ as an open subset, we deduce that this last space is non--empty and smooth, of dimension $10$.

Taking into account Proposition \ref{pStable} we know that $\cE$ represents an $S$--equivalence class in $\cM_F^{ss,aCM}(2;2h,8)\setminus\cM_F^{s,aCM}(2;2h,8)$ if and only if it fits into Sequence \eqref{seqSplit} with $Dh=4$.

Notice that $0\subseteq\cO_F(D)\subseteq \cE$ is the Jordan--H\"older filtration for $\cE$. Indeed
$$
\cE/\cO_F(D)\cong\cO_F(2h-D),\qquad p_{\cE}(t)=p_{\cO_F(D)}(t)=p_{\cO_F(2h-D)}(t).
$$
We conclude that  $\gr(\cE)\cong\cO_F(D)\oplus\cO_F(2h-D)$. Hence all the bundles fitting in the aforementioned Sequence \eqref{seqSplit} are $S$--equivalent and they correspond to the same point in $\cM_F^{ss,aCM}(2;2h,8)$. If $\overline{D}$ is another divisor on $F$ with $\overline{D}h=4$ and $\overline{D}^2=0$, it is easy to check that $\cO_F(D)\oplus\cO_F(2h-D)\cong\cO_F(\overline{D})\oplus\cO_F(2h-\overline{D})$ if and only if $\overline{D}$ is linearly equivalent to either $D$, or $2h-D$.
\end{proof}

\bigskip
\noindent
Gianfranco Casnati,\\
Dipartimento di Scienze Matematiche, Politecnico di Torino,\\
c.so Duca degli Abruzzi 24, 10129 Torino, Italy\\
e-mail: {\tt gianfranco.casnati@polito.it}

\bigskip
\noindent
Roberto Notari, \\
Dipartimento di Matematica \lq\lq Francesco Brioschi\rq\rq, Politecnico di Milano,\\
via Bonardi 9, 20133 Milano, Italy\\
e-mail: {\tt roberto.notari@polimi.it}

\end{document}